\providecommand{\R}{}
\providecommand{\Z}{}
\providecommand{\N}{}
\renewcommand{\R}{\mathbb{R}}
\renewcommand{\Z}{\mathbb{Z}}
\renewcommand{\N}{{\mathbb N}}
\newcommand{\p}[1]{{\mathbb P}\left(#1\right)}
\newcommand{\psub}[2]{{\mathbb P}_{#1}\left(#2\right)}
\newcommand{\I}[1]{{\mathds 1}_{#1}}
\newcommand{\Esub}[2]{{\mathbb E_{#1}}\left[#2\right]}
 \newcommand{\bag}{\begin{align}}
\newcommand{\bags}{\begin{align*}}
\newcommand{\eag}{\end{align*}}
\newcommand{\eags}{\end{align*}}
\newtheorem{thm}{Theorem}[section]
\newtheorem{lem}[thm]{Lemma}
\newtheorem{prop}[thm]{Proposition}
\newtheorem{cor}[thm]{Corollary}
\newtheorem*{remark}{Remark}
\providecommand{\ora}[1]{}
\renewcommand{\ora}[1]{\overrightarrow{#1}}
\newcommand{\eqdist}{\ensuremath{\stackrel{\mathrm{d}}{=}}}
\definecolor{clou}{rgb}{0.8,0.25,0.5125}
\begin{document}

\title{The spreading speed of solutions of the non-local Fisher-KPP equation}
\author{Sarah Penington}
\address{Mathematical Institute, University of Oxford, Woodstock Road, Oxford, OX2 6GG, UK}
\email{penington@maths.ox.ac.uk}

\begin{abstract} 
We consider the Fisher-KPP equation with a non-local interaction term.
In~\cite{hamel2014}, Hamel and Ryzhik showed that in solutions of this equation, the front location at a large time $t$ is $\sqrt 2 t +o(t)$.
We study the asymptotics of the second order term in the front location.
If the interaction kernel $\phi(x)$ decays sufficiently fast as $x\to \infty$ then this term is given by $-\frac{3}{2\sqrt 2 }\log t +o(\log t)$, which is the same correction as found by Bramson in~\cite{bramson1983} for the local Fisher-KPP equation. 
However, if $\phi$ has a heavier tail then the second order term is $-t^{\beta +o(1)}$, where $\beta \in (0,1)$ depends on the tail of $\phi$.
The proofs are probabilistic, using a Feynman-Kac formula.
Since solutions of the non-local Fisher-KPP equation do not obey the maximum principle, the proofs differ from those in~\cite{bramson1983}, although some of the ideas used are similar.
\end{abstract}

\date{\today}
\maketitle

\section{Introduction}
We shall study solutions of the non-local Fisher-KPP equation
\begin{equation} \label{eq:(star)}
\frac{\partial u}{\partial t}=\tfrac{1}{2}\Delta u +\mu u (1- \phi \ast u), \quad t>0, \quad x\in \R,
\end{equation}
where $\mu>0$, $\phi\in L^1(\R)$ is non-negative, and 
$$\phi \ast u(t,x)=\int_\R \phi(y)u(t,x-y)dy.
$$ 
This equation is used to model non-local interaction and competition in a population.
It can be seen as a generalisation of the (local) Fisher-KPP equation
\begin{equation} \label{eq:(dagger)}
\frac{\partial u}{\partial t}=\tfrac{1}{2}\Delta u +\mu u (1- u), \quad t>0, \quad x\in \R,
\end{equation}
introduced in \cite{fisher1937} and \cite{kolmogorov1937}.

In both equations~\eqref{eq:(star)} and~\eqref{eq:(dagger)}, we can think of $u(t,x)$ as the population density at location $x$ at time $t$.
Then the Laplacian terms represent the diffusive motion of the population, and the non-linear terms give the rate of change of the population density due to birth and death.
This rate should be proportional to the population density multiplied by the amount of available resources.
In~\eqref{eq:(dagger)}, we are supposing that the resources available at $x$ at time $t$ are only depleted by the population at $x$.
However, consumption of resources is not completely local, and modelling the depletion of resources by the spatial average $\phi \ast u$, as in~\eqref{eq:(star)}, may be more realistic.
Another way of viewing~\eqref{eq:(star)} and~\eqref{eq:(dagger)} is for the variable $x$ to represent the value of some trait that varies in the population, e.g.~height. Then we think of $u(t,x)$ as the population density with height $x$ at time $t$.
In this case, the Laplacian terms represent the incremental change in height due to mutations, and in~\eqref{eq:(dagger)}, we are assuming that the resources available for individuals with height $x$ are only depleted by other individuals with height $x$. In fact, although competition may be stronger between individuals with similar heights, individuals with different heights may still be competing for resources, making~\eqref{eq:(star)} a better model for this situation.  
See \cite{britton1989}, \cite{genieys1989} and Section 2.3 of~\cite{volpert2009} for background on these and other biological motivations for studying the non-local Fisher-KPP equation.

The main mathematical interest in the non-local Fisher-KPP equation comes from studying the similarities and differences in the behaviour of solutions of the local and non-local equations. In particular, we shall be interested in how this depends on $\phi$ and $\mu$.

Several authors \cite{britton1990,gourley2000,gourley1993,genieys1989,nadin2011} have studied the behaviour of solutions of \eqref{eq:(star)} using numerical simulations and asymptotic stability analysis.
Others \cite{berestycki2009,fang2011,alfaro2012,hamel2014} have proved rigorous results on the properties of travelling wave  and steady state solutions of \eqref{eq:(star)}.
The introduction of \cite{hamel2014} gives a summary of the rich behaviour suggested by these results.

We shall consider the long time behaviour of solutions to the initial value problem
\begin{equation} \label{nonlocal_fkpp}
\begin{cases}
\frac{\partial u}{\partial t}=\tfrac{1}{2}\Delta u +u (1- \phi \ast u), \quad t>0, \quad x\in \R, \\
u(0,x)=u_0(x), \quad x\in \R,
\end{cases}
\end{equation}
where $u_0\geq 0$ and $u_0\in L^\infty (\R)$.
We shall assume throughout (as in~\cite{hamel2014}) that $\phi$ satisfies 
\begin{equation} \label{eq:phi_cond}
\phi \in L^1 (\R), \,\phi \geq 0,\,
\int_{-\infty}^\infty \phi(x)dx =1 \text{ and }\phi\geq \eta \text{ a.e.~on }(-\sigma,\sigma)\text{ for some }\eta, \sigma>0. 
\end{equation}
Note that for $\phi_1\in L^1(\R)$ with $\phi_1\geq 0$ and $\int_{-\infty}^\infty \phi_1 (x)dx=a$, if $v_1$ solves~\eqref{nonlocal_fkpp} with $\phi=\phi_1$ then $v_2:=av_1$ solves~\eqref{nonlocal_fkpp} with $\phi=\phi_2:=a^{-1}\phi_1$. Since $\int_{-\infty}^\infty \phi_2 (x)dx=1$, our assumptions will be satisfied by $v_2$ (if $\phi_1$ satisfies the last condition in~\eqref{eq:phi_cond}).
The last condition in~\eqref{eq:phi_cond} is biologically reasonable, since $\phi(x)$ represents the amount of interaction between individuals at displacement $x$. 
Note also that we have taken $\mu=1$ in~\eqref{eq:(star)}; if $u$ solves~\eqref{eq:(star)} then by rescaling to consider $v(t,x):=u(\mu^{-1}t,\mu^{-1/2}x)$ we have a solution of~\eqref{nonlocal_fkpp} (with rescaled $\phi$).

By standard arguments for parabolic equations, the solution to \eqref{nonlocal_fkpp} exists for all $t>0$, is smooth and classical on $(0,\infty)\times \R$ and satisfies 
\begin{equation} \label{eq:u_bound}
0\leq u(t,x)\leq e^t \|u_0\|_\infty\, \,\forall t\geq 0, x\in \R
\end{equation}
(see Section 3 of \cite{hamel2014}).
We shall take initial conditions $u_0$ which are compactly supported on the right, i.e.~there exists some $L>0$ such that $u_0(x)=0$ $\forall x\geq L$. We will be interested in the front location of $u(t,\cdot)$ for large times $t$, i.e.~the location $x>0$ beyond which $u(t,\cdot)$ is $o(1)$.

The main result in the literature on the spreading speed of solutions is Theorem~1.3 in~\cite{hamel2014}, which was proved using PDE methods and shows that the front location is $\sqrt 2 t +o(t)$ for large times $t$.
The following is (a slightly strengthened form of) Theorem~1.3 in~\cite{hamel2014}.
\begin{thm} \label{thm:speed}
Suppose $u_0\geq 0$, $u_0\in L^\infty (\R)$, $u_0\not \equiv 0$ and
there exists $L<\infty$ such that $\|u_0\|_\infty \leq L$ and $u_0 (x)=0$ $\forall x\geq L$. Let $u$ denote the solution of \eqref{nonlocal_fkpp}.
There exists $m^*>0$ such that for any $\epsilon>0$,
\begin{align*}
\liminf_{t\to\infty}\inf_{x\in[0,(\sqrt 2-\epsilon) t]}u(t,x)&\geq m^*
\quad \text{ and }\quad 
\lim_{t\to\infty}\sup_{x\geq \sqrt 2 t}u(t,x)=0.
\end{align*}
\end{thm}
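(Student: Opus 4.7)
The plan is to use the Feynman-Kac representation. Since $u$ is smooth and bounded on $[0,t]\times\R$ by \eqref{eq:u_bound}, and satisfies the linear parabolic equation $\partial_s u = \tfrac12\Delta u+Vu$ with $V(s,y):=1-(\phi\ast u)(s,y)$, one has
\begin{equation*}
u(t,x)=\mathbb{E}_x\!\left[u_0(B_t)\exp\!\left(\int_0^t\bigl(1-(\phi\ast u)(t-s,B_s)\bigr)\,ds\right)\right],
\end{equation*}
where $(B_s)_{s\ge 0}$ is standard Brownian motion under $\mathbb{P}_x$ with $B_0=x$. Both halves of the theorem are established separately from this formula.

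For the upper bound, bound the exponential by $e^t$ using $\phi\ast u\ge 0$, which yields $u(t,x)\le Le^t\,\mathbb{P}_x(B_t\le L)$. A Gaussian tail estimate of the form $\mathbb{P}(N(0,1)\ge z)\le z^{-1}e^{-z^2/2}$ produces a bound of order $t^{-1/2}$ at $x=\sqrt 2\,t$, and monotonicity of $\mathbb{P}_x(B_t\le L)$ in $x$ yields $\sup_{x\ge\sqrt 2\,t}u(t,x)\to 0$.

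The lower bound is harder, and I would proceed in two stages. First, establish a uniform-in-time bound $u(t,x)\le M$; this is non-trivial without the maximum principle and would use the negative feedback from $\phi\ast u$ once $u$ is sustained on an interval of length $2\sigma$, exploiting $\phi\ge\eta$ on $(-\sigma,\sigma)$ from \eqref{eq:phi_cond}. Combined with the upper bound already proved, splitting the convolution $(\phi\ast u)(t,y)$ into the pieces $|z|\le R/2$ and $|z|>R/2$ shows that for each $\delta>0$ there exist $R,T$ with $(\phi\ast u)(t,y)\le\delta$ whenever $y\ge\sqrt 2\,t+R$ and $t\ge T$. Second, for $x\in[0,(\sqrt 2-\epsilon)t]$, restrict the Feynman-Kac expectation to Brownian paths that (a) travel from $x$ above the line $\sqrt 2(t-s)+R$ in a short initial window, (b) stay above this moving barrier for most of $[0,t]$, and (c) return to a fixed point in the support of $u_0$ in a short final window. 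Along such paths the exponential is at least $\exp((1-\delta)(t-o(t)))$ and the Gaussian probability of the path is at least $\exp(-(\sqrt 2-\epsilon)^2 t/2 - o(t))$. For $\delta<\sqrt 2\,\epsilon$ the product diverges exponentially in $t$; together with the uniform upper bound $u\le M$, a standard iteration then yields a positive constant $m^*$ with $u(t,x)\ge m^*$ uniformly in $x\in[0,(\sqrt 2-\epsilon)t]$ for large $t$.

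The main obstacle is the lower bound: establishing the uniform $L^\infty$ bound without the maximum principle, and carrying out the path decomposition with enough care that the estimate is genuinely uniform in $x$ (which should follow from the Gaussian cost being worst at $x=(\sqrt 2-\epsilon)t$). The upper bound, by contrast, follows directly from linearisation and Gaussian tails.
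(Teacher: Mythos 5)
Your upper-bound argument is fine and is the same as the paper's: bound the exponential by $e^t$, use $\|u_0\|_\infty\leq L$ and $u_0=0$ on $[L,\infty)$, and apply the Gaussian tail bound; the global bound $u\leq M$ that you postulate is also proved in the paper (by an iteration over time steps of length $\delta$ exploiting $\phi\geq\eta$ on $(-\sigma,\sigma)$), so citing it as a first stage is legitimate. The genuine gap is in the key quantitative claim of your lower bound. Your paths must satisfy $B_s\geq \sqrt 2(t-s)+R$ for most of $[0,t]$, because that is where $\phi\ast u(t-s,\cdot)\leq\delta$ is known; but at $s=0$ this barrier sits at $\sqrt 2\,t+R$, which is at least $\epsilon t$ \emph{above} your starting point $x\leq(\sqrt 2-\epsilon)t$, and the straight line from $x$ to the support of $u_0$ lies strictly below the barrier for all $s\in[0,t]$. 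Hence the event you restrict to is not a bridge event of cost $\exp(-(\sqrt 2-\epsilon)^2t/2-o(t))$: climbing $\epsilon t$ in a window of length $o(t)$ costs $\exp(-\epsilon^2t^2/o(t))$, which is super-exponentially small, while if you allow a window of length $at$ the climb costs $\exp\bigl(-\tfrac{(\epsilon-\sqrt 2 a)^2}{2a}t\bigr)$, during it you only know $1-\phi\ast u\geq 1-M$, and tracking the slope-$\sqrt2$ barrier for the remaining time $(1-a)t$ costs $e^{-(1-a)t}$, which cancels the gain $e^{(1-\delta)(1-a)t}$ except for a factor $e^{-\delta(1-a)t}$. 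The total exponent is at most $-\delta(1-a)t-\tfrac{(\epsilon-\sqrt2 a)^2}{2a}t-(M-1)at<0$ for every choice of $a$, so the restricted Feynman--Kac expectation never diverges and your iteration has nothing to bootstrap. This is not a technicality: time spent ahead of a front moving at the critical speed $\sqrt 2$ gains at rate at most $1$ and costs Gaussian rate exactly $1$, so a one-shot Feynman--Kac bound run all the way back to $u_0$ cannot produce a uniform lower bound at linear distance behind the front (consistently with $u\leq M$).

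The paper's route avoids this entirely. It first shows $u(1,0)>0$ and proves a local growth lemma (if $u(t,x)>z$ then within time $C\log(1/z)$ and distance $R$ the solution reaches $1-\delta$) and a propagation lemma (if $u(t,x)\geq m^*$ and $|x'-x|\leq cT$ with $c<\sqrt 2$, then $u(t+T,x')\geq m^*$). Both are proved by a dichotomy on $\phi\ast u$ in a space--time tube of width $R$ tilted at the \emph{subcritical} speed $a\leq c<\sqrt 2$, started from a point where $u$ is already bounded below: if $\phi\ast u<m_0$ throughout the tube, Feynman--Kac in the tube gives a factor $e^{(1-m_0)T}$ against a tube cost $e^{-(c^2/2+\pi^2/(8(R-1)^2))T}$, and the constants are chosen so that $1-m_0-c^2/2-\pi^2/(8(R-1)^2)>0$; if instead $\phi\ast u\geq m_0$ somewhere in the tube, then since $\int\phi=1$ and $u\leq M$ this forces $u\geq$ const at a nearby point, from which one restarts. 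No path is ever required to overtake the front, which is exactly the step your strategy cannot pay for; to repair your proof you would need to replace the single barrier argument by such a dichotomy-and-propagation scheme.
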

The tools we will develop to study solutions of~\eqref{nonlocal_fkpp} will allow us to give
an alternative proof of this result using probabilistic methods. We shall also be able to find the asymptotics of the $o(t)$ term in the front location.

There is a well-known result on the asymptotics of the front location for the local Fisher-KPP equation.
In Theorem 3 of \cite{bramson1983}, Bramson showed that if $u$ is a solution to \eqref{eq:(dagger)} with $\mu=1$, if $\int_0^\infty y e^{\sqrt 2 y}u(0,y)dy<\infty$ and if for some $x_0>0$, $\inf_{x\leq -x_0}u(0,x)>0$ then
\begin{equation} \label{eq:bram1}
u(t,x+m(t))\to w(x)
\end{equation}
uniformly in $x$ as $t\to \infty$,
where 
\begin{equation} \label{eq:bram2}
 m(t)=\sqrt 2 t -\tfrac{3}{2\sqrt 2}\log t+\mathcal O (1)
\end{equation}
and $w(x)\to 0$ as $x\to \infty$, $w(x)\to 1$ as $x\to -\infty$.
(In fact, $w$ is a travelling wave solution of~\eqref{eq:(dagger)}.)
In particular, if $r(t)\to \infty$ as $t\to \infty$, then
\begin{align} \label{eq:bramsonfkpp} 
\lim_{t\to\infty}\sup_{x\geq \sqrt 2 t -\frac{3}{2\sqrt 2}\log t+r(t)}u(t,x)=0
\,\,\text{ and }\,\,
\lim_{t\to\infty}\inf_{x\leq \sqrt 2 t-\frac{3}{2\sqrt 2}\log t-r(t)}u(t,x)=1.
\end{align}
One of the main tools in the study of the local Fisher-KPP equation is a maximum principle: if $u^1$ and $u^2$ are two solutions of \eqref{eq:(dagger)} with $0\leq u^1(0,x)\leq u^2(0,x)\leq 1$ $\forall x\in \R$
then $0\leq u^1(t,x)\leq u^2(t,x)\leq 1$ $\forall t>0, x\in \R$
(see Proposition 3.1 in \cite{bramson1983}).
Bramson's proof of~\eqref{eq:bram1} and~\eqref{eq:bram2} uses a combination of the maximum principle and probabilistic methods using the Feynman-Kac formula.

The lack of an equivalent maximum principle for the non-local Fisher-KPP equation makes it considerably less tractable and means that we cannot apply Bramson's methods directly.
Instead, we rely solely on the Feynman-Kac formula
to prove a version of \eqref{eq:bramsonfkpp} for solutions to the non-local Fisher-KPP equation, as long as $\phi(x)$ decays sufficiently fast as $x\to \infty$ (more precisely, as long as $\limsup_{r\to \infty}r^\alpha \int_r^\infty \phi(x)dx<\infty$ for some $\alpha>2$).
We shall see different behaviour
if instead $\limsup_{r\to \infty}r^\alpha \int_r^{Kr} \phi(x)dx >0$ for some $\alpha\in (0,2)$ and $K<\infty$.

\subsection{Main results}
We now give precise statements of our main results.
Let $u$ denote the solution to \eqref{nonlocal_fkpp}.
We shall assume throughout that $u_0\geq 0$, $u_0\not \equiv 0$, $u_0\in L^\infty (\R)$ and $u_0$ is compactly supported on the right.
In particular, we define $L\in(0,\infty)$ such that $\|u_0\|_\infty \leq L$ and $u_0 (x)=0$ $\forall x\geq L$.
Also, we take $\eta$, $\sigma>0$ and suppose that $\phi$ satisfies assumption~\eqref{eq:phi_cond} with this choice of $\eta$ and $\sigma$.

The first result is a Bramson-type logarithmic delay result which shows that if $\phi(x)$ decays sufficiently fast as $x\to\infty$ then the front location in the non-local Fisher-KPP equation has similar behaviour to the local Fisher-KPP equation.
\begin{thm} \label{thm:lighttail}
Suppose that there exists $\alpha>2$ such that for $r_0$ sufficiently large,  
\begin{equation*} 
\forall r\geq r_0, \,\int_r^\infty \phi(x)dx \leq r^{-\alpha}.
\end{equation*}
Then there exist $A<\infty$ and $m^*>0$ such that
\begin{align*}
\liminf_{t\to\infty}\inf_{x\in[0,\sqrt 2 t-\frac{3}{2\sqrt 2}\log t-A(\log \log t)^3]}u(t,x)&\geq m^*\\
\text{ and }\quad 
\lim_{t\to\infty}\sup_{x\geq \sqrt 2 t -\frac{3}{2\sqrt 2}\log t+10\log \log t}u(t,x)&=0.
\end{align*}
\end{thm}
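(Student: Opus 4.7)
The plan is to use the Feynman--Kac representation
\begin{equation*}
u(t,x) = \Esub{x}{u_0(B_t)\exp\Bigl(\textstyle\int_0^t\bigl(1-\phi\ast u(t-s,B_s)\bigr)\,ds\Bigr)}
\end{equation*}
and to control the random exponential factor using Brownian-bridge estimates in the spirit of Bramson's proof of~\eqref{eq:bram2} for the local equation. The first task is to derive two-sided quantitative bounds on $\phi\ast u$. The lower bound from \refT{thm:speed}, combined with the condition $\phi\ge\eta$ on $(-\sigma,\sigma)$ from~\eqref{eq:phi_cond}, yields $\phi\ast u(r,y)\ge c>0$ for $y\le(\sqrt 2-\epsilon)r$ with $r$ large. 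Conversely, the super-solution bound $u(r,y)\le e^r\,\Esub{y}{u_0(B_r)}$ (which follows from $\phi\ast u\ge 0$) combined with the tail hypothesis $\int_r^\infty\phi(x)\,dx\le r^{-\alpha}$ with $\alpha>2$ yields a polynomial decay $\phi\ast u(r,y)\le C(y-\sqrt 2\,r)^{-\alpha}$ for $y-\sqrt 2\,r$ large; the strict inequality $\alpha>2$ is what forces this decay to dominate the $\log r$ correction.

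For the upper-bound half of the theorem, I would bound $u_0\le L\,\I{(-\infty,L]}$ and split the Feynman--Kac expectation according to whether the Brownian path dips below a Bramson-type barrier $\gamma(s):=\sqrt 2\,(t-s)-\tfrac{3}{2\sqrt 2}\log(t-s)$. On paths that dip below $\gamma$, the preliminary lower bound on $\phi\ast u$ supplies a multiplicative factor $e^{-c\,|\{s\,:\,B_s\le\gamma(s)\}|}$ that kills the contribution. On paths that stay above $\gamma$, the relevant quantity is the probability that a Brownian bridge from $x=\sqrt 2\,t-\tfrac{3}{2\sqrt 2}\log t+10\log\log t$ to some $B_t\le L$ stays above $\gamma$; a standard ballot-type estimate gives this is of order $(\log t)(\log\log t)/t$ times the Gaussian density $\tfrac{1}{\sqrt{2\pi t}}\,e^{-x^2/(2t)}$, and substituting for $x$ and multiplying by $e^t$ produces a polylogarithmic-in-$t$ quantity that vanishes as $t\to\infty$.

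For the lower-bound half I would apply Feynman--Kac iteratively: starting from the coarse lower bound of \refT{thm:speed} (valid only on $y\le(\sqrt 2-\epsilon)r$), at each step a Feynman--Kac lower bound together with the preliminary two-sided control of $\phi\ast u$ enlarges the region on which $u\ge m^*/2$ and eventually shrinks the gap between this region and the predicted front location $\sqrt 2\,r-\tfrac{3}{2\sqrt 2}\log r$ to $O((\log\log r)^3)$. The cubic exponent in $\log\log r$ arises naturally as the cumulative error over roughly $O(\log\log r)$ bootstrap iterations, each of which introduces a $(\log\log r)^2$-type slack in a Brownian-bridge estimate.

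The principal obstacle I foresee is the absence of a maximum principle for~\eqref{nonlocal_fkpp}: Bramson's proof of~\eqref{eq:bram1}--\eqref{eq:bram2} relies on comparison with $u\equiv 1$ and with translated wave solutions, and neither tool is available here, so both the two-sided bounds on $\phi\ast u$ and the lower-bound iteration must be built via a bootstrap starting from the rough information in \refT{thm:speed}. The hypothesis $\alpha>2$ must be used sharply in this bootstrap: a slower polynomial decay of $\phi$ would leave $\phi\ast u$ non-negligible well ahead of the front, and would wash out the $\tfrac{3}{2\sqrt 2}\log t$ correction---which is precisely why the heavy-tailed regime treated later in the paper yields a completely different second-order term of the form $t^{\beta+o(1)}$.
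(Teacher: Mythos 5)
Your overall framework (Feynman--Kac plus Brownian-bridge barrier estimates, bootstrapping from Theorem~\ref{thm:speed} because there is no maximum principle) is the right one, but as written both halves have genuine gaps. For the upper bound, the penalty factor $e^{-c\,|\{s:\,B_s\le\gamma(s)\}|}$ with $\gamma(s)=\sqrt2(t-s)-\tfrac{3}{2\sqrt2}\log(t-s)$ does not follow from the preliminary bound you actually establish: Theorem~\ref{thm:speed} only gives $\phi\ast u(r,y)\ge c$ for $y\le(\sqrt2-\epsilon)r$, which lies a distance of order $\epsilon r$ \emph{behind} $\gamma$. A path can spend all its time below $\gamma(s)$ but above $(\sqrt2-\epsilon)(t-s)$ and incur no penalty, and staying above the line $(\sqrt2-\epsilon)(t-s)$ costs the bridge only an $O(1)$ factor (Lemma~\ref{lem:bram1} with $y_1\asymp\epsilon t$, $y_2\asymp L$), not the crucial $O(\mathrm{polylog}(t)/t)$ ballot factor; since $e^t\,\psub{x}{B(t)\le L}\asymp t\,e^{-\sqrt2\cdot 10\log\log t}$, the bound you get diverges. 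This is exactly why the paper first proves an intermediate result (Proposition~\ref{prop:logdelay}): the front is within $K\log t$ of $\sqrt2 t$, so the penalty region reaches to within $O(\log(t-s))$ of the chord and the barrier decomposition works. That intermediate step in turn needs two ingredients absent from your proposal: (i) the observation (Lemma~\ref{lem:t-1u} plus the tail hypothesis, with an entropic-repulsion curve $s^\delta$, $\delta\in(1/\alpha,1/2)$ — this is where $\alpha>2$ really enters, via $\alpha\delta>1$, not via ``dominating the $\log r$ correction'') that along paths staying ahead of $\sqrt2(t-s)+\tfrac{1}{2\sqrt2}\log t+s^\delta$ the accumulated $\int\phi\ast u$ is only $O(\log t)$, so $u(t,\sqrt2 t+\tfrac{1}{2\sqrt2}\log t+1)\ge t^{-C}$; and (ii) a growth-and-spreading device (Lemmas~\ref{lem:growu} and~\ref{lem:travelu}) that converts a value $z$ of $u$ into a value $\ge m^*$ nearby after an extra time $C\log(1/z)$, which is the substitute for the missing comparison principle. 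Without some such device your bootstrap has no way to turn small values of $u$ ahead of the front into order-one values at a controlled delay.

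For the lower bound, the proposed iteration (``$O(\log\log r)$ bootstrap steps, each with $(\log\log r)^2$ slack'') is not substantiated and is not where the cubic exponent comes from. In the paper, once the upper bound is proved, Lemma~\ref{lem:growu} upgrades it to exponential decay of $u$ beyond the front (Corollary~\ref{cor:expdecay}); this gives $\phi\ast u\lesssim s^{-\alpha\delta}$ along paths repelled by $(\sqrt2+13)\min(s^\delta,(t-s)^\delta)$ above the Bramson chord, so that in a \emph{single} Feynman--Kac application $\int_0^{t-1}\phi\ast u\le 3M(\log\log t)^{1/\delta}$ (only the initial stretch of length $(\log\log t)^{1/\delta}$, where one uses the trivial bound $M$, contributes), whence $u(t,\sqrt2 t-\tfrac{3}{2\sqrt2}\log t)\ge e^{-4M(\log\log t)^{1/\delta}}$; the growth/spreading lemmas then convert this into a front lag $O((\log\log t)^{1/\delta})$, and the exponent $3$ is simply $1/\delta$ for the admissible choice $\delta\in(\max(1/\alpha,1/3),1/2)$ of the entropic-repulsion curve. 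So the cubic power is a one-shot consequence of the curve exponent constraint, not a cumulative iteration error, and your sketch would need to be reorganized around these intermediate results (preliminary $K\log t$ front bound, upper bound, exponential decay, refined lower bound) to close.
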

The next two results show different behaviour to the local Fisher-KPP equation if $\phi$ has a heavier tail.
The first of these results gives a lower bound on the front location.
\begin{thm} \label{thm:heavylower}
Suppose that there exists $\alpha\in (0,2)$ such that for $r_0$ sufficiently large, 
\begin{equation*} 
\forall r\geq r_0, \,\int_r^\infty \phi(x)dx \leq r^{-\alpha}.
\end{equation*}
Then there exists $m^*>0$ such that for any $\beta>\frac{2-\alpha}{2+\alpha}$, 
$$
\liminf_{t\to\infty}\inf_{x\in[0,\sqrt 2 t -t^\beta]}u(t,x)\geq m^*.
$$
\end{thm}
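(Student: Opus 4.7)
The approach is probabilistic, via the Feynman--Kac representation
\[
u(t,x) \,=\, \mathbb E_x\!\left[u_0(B_t)\exp\!\Big(\int_0^t(1-\phi\ast u(t-s,B_s))\,ds\Big)\right],
\]
combined with a Girsanov change of measure tilting $B$ by drift $-\sqrt 2$. For the worst-case target $x=\sqrt 2 t - t^\beta$ this gives
\[
u(t,x) \,=\, e^{\sqrt 2 t^\beta}\,\mathbb E_\mathbb Q\!\left[u_0(B_t)\,e^{\sqrt 2 B_t}\exp\!\Big(-\!\int_0^t \phi\ast u(t-s,B_s)\,ds\Big)\right],
\]
where under $\mathbb Q$ the process $W_s := B_s + \sqrt 2 s - x$ is a standard Brownian motion from $0$, so the goal becomes bounding the $\mathbb Q$-expectation below by a constant multiple of $e^{-\sqrt 2 t^\beta}$.

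I would restrict this expectation to a good event $G$ on which the path $B$ travels well ahead of the wave front through most of $[0,t]$. Fix $I_0=[a,b]\subset\operatorname{supp}(u_0)$ with $u_0\ge c_0>0$ on $I_0$, and take large constants $K,K'$; set $R=Kt^{(1-\beta)/\alpha}$ (distance ahead of front) and $\tau = K' t^{2(1-\beta)/\alpha - \beta}$ (climb/descent time). The hypothesis $\beta>(2-\alpha)/(2+\alpha)$ is exactly $2(1-\beta)/\alpha - \beta < 1$, so $\tau=o(t)$ and the choice is admissible. Define $G$ to require that under $\mathbb Q$, the path $W$ reaches $t^\beta+R$ by time $\tau$, remains $\ge t^\beta+R$ throughout $[\tau, t-\tau]$, descends back to approximately $t^\beta$ by time $t$, and $B_t\in I_0$. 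On $G$, the path $B$ is at least $R$ units ahead of the wave front $\sqrt 2 (t-s)$ throughout the middle interval.

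The non-local competition is controlled on $G$ using the key pointwise bound
\[
\phi\ast u(\sigma,y) \,\le\, \|u\|_\infty \int_r^\infty \phi(z)\,dz + (\text{decay ahead of front}) \,\le\, Cr^{-\alpha},
\]
valid when $y$ is $r\ge 1$ units ahead of the front at time $\sigma$; this uses a uniform upper bound on $u$ together with the decay of $u$ ahead of the front, both of which I would invoke as preliminary estimates (companions to the proof of \refT{thm:speed} earlier in the paper). This yields $\int_\tau^{t-\tau}\phi\ast u(t-s,B_s)\,ds \le CtR^{-\alpha}=CK^{-\alpha}t^\beta$ on $G$; on the climb and descent intervals, $V = 1-\phi\ast u\ge 0$ (assuming $u\le 1$), so those contributions can be dropped in the lower bound. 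Standard Gaussian and ballot-type estimates for $W$ give
\[
\mathbb Q(G) \,\gtrsim\, \exp\!\Big(-\tfrac{(t^\beta+R)^2}{2\tau} - \tfrac{R^2}{2\tau}\Big)\cdot\mathrm{poly}(t) \,\asymp\, \exp\!\Big(-\tfrac{K^2}{K'}t^\beta\Big)\cdot\mathrm{poly}(t),
\]
using $R\gg t^\beta$ for $\beta$ near the threshold. Taking $K$ large so $CK^{-\alpha}<\sqrt 2/4$ and $K'$ large so $K^2/K'<\sqrt 2/4$, the total cost exponent is at most $\sqrt 2 t^\beta/2$; combining with the prefactor $e^{\sqrt 2 t^\beta}$ and the uniformly bounded factors on $G$ yields $u(t,x)\ge m^*$ for $t$ large, uniformly in $x\le \sqrt 2 t - t^\beta$ by monotonicity.

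The main obstacle is the handling of the climb and descent phases of $G$: a naive bound $\int \phi\ast u\lesssim M\tau$ on these intervals could swamp $\sqrt 2 t^\beta$. The fix sketched above---using the non-negativity of $V$ rather than the magnitude of $\phi\ast u$---requires the preliminary bound $\phi\ast u\le 1$ (a form of uniform upper bound on $u$); if only a weaker bound is available, $\tau$ must be shortened and one must work harder to preserve the threshold $\beta_c$. A second delicate point is implementing the ``stay above a high level for time $\sim t$'' constraint via reflection/ballot estimates, which contribute only a polynomial factor, rather than via a Brownian tube argument that would incur an exponential penalty. The critical exponent $(2-\alpha)/(2+\alpha)$ emerges precisely from the two-way balance between tilt cost $\sim R^2/\tau$ and middle-interval competition cost $\sim tR^{-\alpha}$, both of which scale like $t^\beta$ at the critical parameter choice.
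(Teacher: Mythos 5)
Your overall strategy (Feynman--Kac plus a change of measure, a path kept well ahead of the front, and the bound $\phi\ast u\lesssim r^{-\alpha}$ at distance $r$ ahead of the front, with the threshold $\frac{2-\alpha}{2+\alpha}$ emerging from balancing the climb cost $R^2/\tau$ against the bulk competition $tR^{-\alpha}$) is in the right spirit, but as written it has a genuine gap exactly at the point you flag: the climb and descent phases. Your fix requires $\phi\ast u\le 1$, i.e.\ a uniform bound $u\le 1$, and this is precisely what is \emph{not} available for the non-local equation: there is no maximum principle, and the only global bound is $u\le M$ with $M=M(\|u_0\|_\infty,\eta,\sigma)$ possibly much larger than $1$ (Proposition~\ref{prop:globalbound}). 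With only $u\le M$, the loss on a climb interval of length $\tau=K't^{2(1-\beta)/\alpha-\beta}$ is of order $M\tau$, and for $\beta$ between $\frac{2-\alpha}{2+\alpha}$ and $\frac{1}{1+\alpha}$ one has $\tau\gg t^{\beta}$ (at the critical $\beta$ in fact $\tau\asymp t$), so $M\tau$ swamps the Girsanov gain $e^{\sqrt2 t^{\beta}}$; shortening $\tau$ does not help, since then the climb cost $R^{2}/\tau$ exceeds $t^{\beta}$. The difficulty is intrinsic to starting the path at $x=\sqrt2 t-t^{\beta}$, i.e.\ \emph{behind} the front, where $\phi\ast u$ may genuinely be of order $M$ for a time interval of length at least $t^{\beta}$, so this route would at best give the result under an extra smallness assumption on $M$. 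A second, smaller gap is the final sentence: you cannot get uniformity over $x\in[0,\sqrt2 t-t^{\beta}]$ ``by monotonicity,'' since without a comparison principle $u(t,\cdot)$ need not be monotone behind the front.

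The paper circumvents both problems by never running the Feynman--Kac argument from a point behind the front. It first proves the stretched-exponential bound $u(t,\sqrt2 t)\ge\exp(-t^{2\gamma-1+3\epsilon})$ with $\gamma=\frac{2}{2+\alpha}$, restricting to paths that stay above $\sqrt2(t-s)+\min(2s^{\gamma},2(t-1-s)^{\gamma})$: since the starting point is at the front, the only interval on which one must pay the crude bound $\phi\ast u\le M$ has length $O((\log t)^{1/\gamma})$, the remaining competition integrates to $O(t^{1-\alpha\gamma})=O(t^{2\gamma-1})$ by Lemma~\ref{lem:t-1u} and the tail assumption, and the probability of the tube event is controlled by Lemma~\ref{lem:abovegamma} (via Roberts' estimate, Proposition~\ref{prop:roberts}), costing only $\exp(-x^{2}/2t-O(t^{2\gamma+2\epsilon-1}))$. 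This tiny value is then upgraded to $u\ge m^*(1)$ using the growth and propagation lemmas (Lemmas~\ref{lem:growu} and~\ref{lem:travelu}), which cost only an additional time $O(t^{2\gamma-1+3\epsilon})$; finally the uniformity in $x$ is obtained not by monotonicity but by reparameterizing $t'=x/\sqrt2$ and applying the same two lemmas, together with a separate elementary argument for $x\in[0,t/2]$. If you want to salvage your direct approach, you would need to replace the assumption $u\le1$ by some mechanism of this kind for the initial phase; as it stands, the proposal does not prove the theorem.
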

The next result is an upper bound on the front location.
\begin{thm} \label{thm:heavyupper}
Suppose that there exist $\alpha\in (0,2)$ and $K<\infty$ such that for $r_0$ sufficiently large, 
\begin{equation*} 
\forall r\geq r_0, \,\int_r^\infty \phi(x)dx \leq r^{-\alpha/2}\text{ and }\int_r^{Kr}\phi (x) dx\geq r^{-\alpha}.
\end{equation*}
Then for any $\beta<\frac{2-\alpha}{2+\alpha}$,
$$
\lim_{t\to\infty}\sup_{x\geq \sqrt 2 t -t^\beta}u(t,x)=0.
$$
\end{thm}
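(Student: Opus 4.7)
The plan is to combine the Feynman-Kac representation with the heavy-tail lower bound on $\phi\ast u$ that is forced by Theorem~\ref{thm:heavylower}. I start from
\[
u(t,x)=\Esub{x}{u_0(B_t)\exp\!\Big(\int_0^t\bigl(1-\phi\ast u(t-s,B_s)\bigr)\,ds\Big)}
\]
with $B$ a standard Brownian motion from $x$, and carry out a Girsanov change of measure adding drift $-\sqrt{2}$, under which $B_s=x-\sqrt{2}s+W_s$ for a standard Brownian motion $W$ from $0$ under the new measure $\Q$. On $\{B_t\le L\}$ the Girsanov weight contributes at most $e^{\sqrt 2 L}$, and for $x=\sqrt{2}\,t-t^\beta$ the prefactors collapse to
\[
u(t,x)\ \le\ C\,e^{\sqrt{2}\,t^\beta}\,\esub{\Q}\!\left[\exp\!\Big(-\int_0^t\phi\ast u(t-s,B_s)\,ds\Big)\right].
\]
The whole game is then to show that under $\Q$ the integrated competition $X:=\int_0^t\phi\ast u(t-s,B_s)\,ds$ is larger than $\sqrt{2}\,t^\beta$ with sufficiently high probability to kill the prefactor.

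To produce a pointwise lower bound on $\phi\ast u$, I fix an auxiliary exponent $\beta_1$ just above $\beta_0:=(2-\alpha)/(2+\alpha)$. Using the upper-tail hypothesis $\int_r^\infty\phi\le r^{-\alpha/2}$, Theorem~\ref{thm:heavylower} (applied with exponent $\alpha/2$) gives $u(\tau,z)\ge m^*/2$ on $[0,\tilde m(\tau)]$ with $\tilde m(\tau):=\sqrt{2}\tau-\tau^{\beta_1}$, for all $\tau$ large enough. Combining this with the ring lower bound $\int_r^{Kr}\phi(w)\,dw\ge r^{-\alpha}$ yields
\[
\phi\ast u(\tau,y)\ \ge\ \tfrac{m^*}{2}\int_{y-\tilde m(\tau)}^{y}\phi(w)\,dw\ \ge\ c\,(y-\tilde m(\tau))^{-\alpha}
\]
for $y$ slightly ahead of $\tilde m(\tau)$ (more precisely, with gap in $[r_0,\tilde m(\tau)/(K-1)]$), together with the complementary bound $\phi\ast u\ge c$ for $y$ well inside $[0,\tilde m(\tau)]$, which follows from the assumption $\phi\ge\eta$ on $(-\sigma,\sigma)$.

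I then restrict the $\Q$-expectation to a good event $G$ on which the fluctuation $W$ stays inside a slowly growing tube (e.g.\ $|W_s|\le f(s)$ with $f(s)$ of order $(t-s)^{\beta_1}$ plus polylogarithmic slack), chosen so that $\Q(G^c)$ decays faster than any polynomial in $t$. On $G$ the gap $\eta_s:=B_s-\tilde m(t-s)=(t-s)^{\beta_1}-t^\beta+W_s$ can be controlled deterministically and, inserted into the previous step, gives $X\ge c\,t^\gamma$ for a suitable $\gamma=\gamma(\alpha,\beta,\beta_1)>\beta$ whenever $\beta<\beta_0$. A Chernoff-type split $\esub{\Q}[e^{-X}]\le e^{-a}+\Q(X<a)$ with $a$ slightly above $\sqrt{2}\,t^\beta$ then makes both terms vanish, and concludes $u(t,x)\to 0$ uniformly for $x\ge\sqrt{2}\,t-t^\beta$. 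The main obstacle is this last path estimate: one has to balance the heavy-tail killing rate $\eta_s^{-\alpha}$ (effective when $B_s$ stays ahead of $\tilde m(t-s)$, which happens mostly for $t-s$ not too close to $0$) against the $O(1)$ killing rate available on the (random) times when $B_s$ drops near or below $\tilde m(t-s)$, treating separately the regimes $\beta_1>\tfrac12$ and $\beta_1<\tfrac12$, where the comparison between $(t-s)^{\beta_1}$ and $\sqrt{s}$ reverses. It is the optimisation of this trade-off, as $\beta_1\downarrow\beta_0$, that yields the sharp exponent $(2-\alpha)/(2+\alpha)$.
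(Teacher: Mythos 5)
Your reduction to $u(t,x)\le Ce^{\sqrt 2 t^\beta}\,\esub{\Q}\!\left[e^{-X}\right]$ and your lower bound on the killing rate via the bulk estimate plus the ring hypothesis are sound, and they match the paper's strategy in spirit (the paper works with the Brownian bridge conditioned on $\{B(t)\le L\}$ rather than a Girsanov tilt, which is equivalent). The genuine gap is the path estimate that you yourself flag as the main obstacle. First, the target you set for the good event is too weak: because of the prefactor $e^{\sqrt 2 t^\beta}$ you need $\Q(X<a)=o\!\left(e^{-\sqrt 2 t^\beta}\right)$, a stretched-exponential bound, so ``$\Q(G^c)$ decays faster than any polynomial'' cannot close the argument for any $\beta>0$. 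Second, and more seriously, the event you propose cannot have small complement at all: confining $W$ to a tube of width of order $(t-s)^{\beta_1}$ plus polylogarithmic slack forces $|W_s|$ to be polylogarithmic for $s$ near $t$, while $W_s$ has fluctuations of order $\sqrt t$, so $\Q(G)\to 0$ and $\Q(G^c)\to 1$; on $G^c$ you have no lower bound on $X$, so the Chernoff split yields nothing. The paper's resolution (Proposition~\ref{prop:heavytailupper}) is to use a one-sided barrier at the \emph{fixed} height $t^\gamma$, $\gamma=\frac{2}{2+\alpha}$, above the line $\sqrt 2(t-s)$, together with $B(s)\ge 0$, imposed only for $s\in[t/4,3t/4]$: conditionally on $B(t)\le L$ the exceedance probability is of order $e^{-t^{2\gamma-1}/32}$, which beats $e^{\sqrt 2 t^\beta}$ precisely because $\beta<2\gamma-1=\frac{2-\alpha}{2+\alpha}$, while on the good event the ring hypothesis applied at the single scale $2t^\gamma$ gives $\phi\ast u\ge m2^{-\alpha}t^{-\alpha\gamma}$, hence $X\ge c\,t^{2\gamma-1}\gg t^\beta$. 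In particular the sharp exponent comes from optimising the barrier height (balancing the integrated kill $t^{1-\alpha\gamma}$ against the exceedance cost $t^{2\gamma-1}$), not from letting $\beta_1\downarrow\beta_0$ as your last sentence suggests.

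A secondary inaccuracy: applying Theorem~\ref{thm:heavylower} with tail exponent $\alpha/2$ only gives $u\ge m$ on $[0,\sqrt 2\tau-\tau^{\beta_1}]$ for $\beta_1>\frac{4-\alpha}{4+\alpha}$, not for $\beta_1$ just above $\frac{2-\alpha}{2+\alpha}$. This is harmless for the corrected argument (any $\beta_1\in\left(\frac{4-\alpha}{4+\alpha},\frac{2}{2+\alpha}\right)$ suffices, which is exactly the paper's choice of $\gamma'$), but it underlines that the tuning parameter in the upper bound must be the barrier scale rather than the bulk exponent $\beta_1$.
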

\begin{remark}
Suppose there exist $\alpha\in (0,2)$,
$0<C_1\leq C_2<\infty$ and $r_0<\infty$
 such that $\forall r\geq r_0$, $C_1 r^{-\alpha}\leq \int_r^\infty \phi (x)dx\leq C_2 r^{-\alpha}$. 
Let $\beta = \frac{2-\alpha}{2+\alpha}$. 
Then by Theorems~\ref{thm:heavylower} and~\ref{thm:heavyupper}, there exists $m^*>0$ such that for any $\epsilon>0$,
$$
\liminf_{t\to\infty}\inf_{x\in[0,\sqrt 2 t -t^{\beta+\epsilon}]}u(t,x)\geq m^*
\text{ and }\lim_{t\to\infty}\sup_{x\geq \sqrt 2 t -t^{\beta-\epsilon}}u(t,x)=0.
$$
Note that $\frac{2-\alpha}{2+\alpha}\to 0$ as $\alpha \uparrow 2$ and $\frac{2-\alpha}{2+\alpha}\to 1$ as $\alpha \downarrow 0$.
\end{remark}
Our final result is a weaker version of Theorem~\ref{thm:heavyupper} with weaker conditions.
\begin{thm} \label{thm:heavyweak}
Suppose that there exist $\alpha\in (0,2)$ and $K<\infty$ such that
\begin{equation*} 
\forall R>0, \exists\, r>R \text{ such that } \int_r^{Kr}\phi (x) dx\geq r^{-\alpha}.
\end{equation*}
Then for any $\beta<\frac{2-\alpha}{2+\alpha}$,  
$$
\liminf_{t\to \infty} \inf_{x\in [0,\sqrt 2 t-t^\beta]}u(t,x)=0 .
$$
\end{thm}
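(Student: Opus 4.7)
I would follow the Feynman--Kac strategy used for Theorem~\ref{thm:heavyupper}, executed only along a sparse sequence of times $t_n\to\infty$ (reflecting the fact that the tail condition on $\phi$ is now assumed only along a sparse sequence of scales, rather than for all sufficiently large $r$). First, extract a sequence $r_n\to\infty$ with $\int_{r_n}^{Kr_n}\phi(y)\,dy\geq r_n^{-\alpha}$, choose $t_n:=r_n^{(2+\alpha)/2}$ (the critical scale at which the non-local suppression starts to bite), and set $x_n:=\sqrt 2\,t_n-t_n^\beta$. Since $\beta<\tfrac{2-\alpha}{2+\alpha}$ strictly, one then has $r_n^{1-\alpha}\gg t_n^\beta$, which will be the key inequality. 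The aim is to show $u(t_n,x_n)\to 0$; since $x_n\in[0,\sqrt 2\,t_n-t_n^\beta]$ this implies the claim.

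The main input from the paper is Theorem~\ref{thm:speed}: applied to $u$, and (after reflecting the initial data) symmetrically on the left, it gives a constant $m^*>0$ such that for large $\tau$ one has $u(\tau,y)\geq m^*/2$ on a linearly growing interval around the origin. Combined with $\int_{r_n}^{Kr_n}\phi\geq r_n^{-\alpha}$, this yields
\[
(\phi\ast u)(\tau,y)\;\geq\;\tfrac{m^*}{2}\,r_n^{-\alpha}
\qquad\text{whenever } y\in\big[Kr_n-(\sqrt 2-\eps)\tau,\;(\sqrt 2-\eps)\tau+r_n\big],
\]
a ``good region'' occupying essentially the bulk of the support cone. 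Inserting this into the Feynman--Kac representation
\[
u(t_n,x_n) \;=\; \esub{x_n}\!\Big[u_0(B_{t_n})\,\exp\!\Big(\int_0^{t_n}(1-\phi\ast u)(t_n-s,B_s)\,ds\Big)\Big]
\]
and splitting the expectation according to whether the Brownian path $s\mapsto B_s$ spends a long time in the good region, the typical contribution is bounded by $\exp(\sqrt 2\,t_n^\beta - c\,r_n^{1-\alpha})$ (the Gaussian bridge density at $B_{t_n}\in[-L,L]$ times the suppression accumulated during the path's time of order $r_n$ in the good region), while the atypical contribution (paths deviating far from the Brownian bridge mean or ending outside $[-L,L]$) is killed by a direct Brownian deviation estimate, easily beating the $e^{t_n}$ growth.

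The main obstacle is the range $\alpha\in[1,2)$, where the single-scale suppression $r_n^{1-\alpha}$ is bounded rather than divergent, so the naive argument above does not close on its own. I expect to circumvent this by a Girsanov change of measure: adding a drift $-\mu$ with $\mu\in(0,\sqrt 2)$ to the Brownian motion forces the drifted path to spend time of order $t_n$ (rather than $r_n$) in the good region, at the price of an explicit Radon--Nikodym density whose exponential cost $\exp(\tfrac12\mu^2 t_n-\mu(x_n-B_{t_n}))$ is sharply controlled by the Gaussian bridge. Optimising $\mu$ in terms of $\alpha$ and $\beta$ reveals that the gap provided by the strict inequality $\beta<\tfrac{2-\alpha}{2+\alpha}$ is exactly what is needed for the suppression gain to dominate the Girsanov cost, completing the proof.
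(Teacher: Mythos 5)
Your outline has a genuine gap, and it is exactly at the point you flag as ``the main obstacle'' --- but the obstacle is both larger than you state and not fixable by the proposed Girsanov patch. You feed the Feynman--Kac bound only the lower bound coming from Theorem~\ref{thm:speed}, i.e.\ $u(\tau,y)\geq m^*$ for $y\in[0,(\sqrt 2-\epsilon)\tau]$. With $r_n=t_n^{\gamma}$, $\gamma=\tfrac{2}{2+\alpha}$, the paths that dominate $\mathbb{E}_{x_n}\big[u_0(B_{t_n})e^{\int_0^{t_n}(1-\phi\ast u)(t_n-s,B_s)ds}\big]$ are pinned by $u_0(B_{t_n})\neq 0$ (so $B_{t_n}\leq L$) near the straight line $s\mapsto\frac{t_n-s}{t_n}x_n\approx\sqrt 2\,(t_n-s)$, which lies a distance about $\epsilon(t_n-s)$ \emph{ahead} of your good region $(-\infty,(\sqrt 2-\epsilon)(t_n-s)+r_n]$; such a path only enters the good region during the last time window of length about $r_n/\epsilon$, so the suppression it accumulates is of order $r_n^{1-\alpha}=t_n^{2(1-\alpha)/(2+\alpha)}$. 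Since $\tfrac{2(1-\alpha)}{2+\alpha}<\tfrac{2-\alpha}{2+\alpha}$ for \emph{every} $\alpha\in(0,2)$, your ``key inequality'' $r_n^{1-\alpha}\gg t_n^{\beta}$ is false whenever $\beta\in[\tfrac{2(1-\alpha)}{2+\alpha},\tfrac{2-\alpha}{2+\alpha})$, so the naive bound fails near the critical exponent for all $\alpha$, not just $\alpha\in[1,2)$. The Girsanov idea does not repair this: a change of measure is an identity for the expectation, not a device that forces the dominant paths to spend time of order $t_n$ in the good region, and no tilting can manufacture more suppression than the assumed lower bound on $u$ actually supplies along the paths that matter. (This is also why the theorem is stated only in liminf form; the pointwise statement $u(t_n,\sqrt 2 t_n-t_n^{\beta})\to 0$ is what Theorem~\ref{thm:heavyupper} gives, and there the extra hypothesis $\int_r^\infty\phi\leq r^{-\alpha/2}$ is used, via Proposition~\ref{prop:heavytaillower}, to push the lower bound on $u$ much closer to the front.)

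The idea you are missing is that the liminf formulation is there to be exploited by contradiction. The paper first proves Proposition~\ref{prop:heavytailupper}, whose hypothesis is a lower bound $u(t,x)\geq m$ on $[0,\sqrt 2 t-t^{\gamma'}]$ with $\gamma'<\gamma$, i.e.\ extending to within $t^{\gamma'}\ll t^{\gamma}$ of the front. Under that hypothesis the good region where $\phi\ast u\gtrsim t^{-\alpha\gamma}$ reaches about $t^{\gamma}$ \emph{ahead} of the front $\sqrt 2(t-s)$, hence contains the typical bridge for the whole window $s\in[t/4,3t/4]$, giving suppression of order $t\cdot t^{-\alpha\gamma}=t^{(2-\alpha)/(2+\alpha)}\gg t^{\beta}$. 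Theorem~\ref{thm:heavyweak} then follows by assuming the liminf is positive: this yields $\epsilon>0$ and $T$ with $u(t,x)\geq\epsilon$ on $[0,\sqrt 2 t-t^{\beta}]$ for all $t\geq T$, which is precisely the needed near-front lower bound with $\gamma'=\beta$ and $m=\epsilon$; applying the proposition at a large time $t_0$ from your sparse sequence (where $\int_{2t_0^{\gamma}}^{2Kt_0^{\gamma}}\phi\geq 2^{-\alpha}t_0^{-\alpha\gamma}$) gives $u(t_0,\sqrt 2 t_0-t_0^{\beta})<\epsilon$, a contradiction. Without this self-referential step (or the additional tail upper bound on $\phi$), your approach does not close; as a minor further point, your good region also extends to the left of the origin, where Theorem~\ref{thm:speed} as stated gives no lower bound, though restricting to $y\geq Kr_n$ would avoid that issue.
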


\subsection{Outline of the article}
The proofs of our results are arranged as follows. In Section~\ref{sec:prel}, we prove general results which hold for any $\phi$ satisfying assumption~\eqref{eq:phi_cond}. These results will be used in the proofs in Sections~\ref{sec:light} and~\ref{sec:heavy}. We also give a probabilistic proof of Theorem~\ref{thm:speed}. In Section~\ref{sec:light}, we give a proof of Theorem~\ref{thm:lighttail} and in Section~\ref{sec:heavy} we prove Theorems~\ref{thm:heavylower}, \ref{thm:heavyupper} and \ref{thm:heavyweak}.

\subsection{Notation and main tools} \label{sec:notation}

For $x\in \R$, we shall write $\mathbb P_x$ for the probability measure under which $(B(t),t\geq 0)$ is a Brownian motion started at $x$, and $\mathbb E_x$ for the corresponding expectation.
We shall also write $\mathbb P$ for the probability measure under which, for each $t>0$, $(\xi^t(s),0\leq s \leq t)$ is a Brownian bridge from $0$ to $0$ in time $t$, and $\mathbb E$ for the corresponding expectation.

The main tools used in the proofs will be a form of the Feynman-Kac formula and elementary facts about the Gaussian distribution, which for ease of reference we record here.
\begin{prop}[Feynman-Kac formula] \label{prop:fk}
Suppose for some $T\in (0,\infty)$ and some bounded continuous function $k:[0,T]\times \R\to \R$ that $u:[0,T]\times \R\to \R$ is bounded, continuous on $[0,T]\times \R$ and smooth on $(0,T]\times \R$, and satisfies
$$
\frac{\partial u}{\partial t}=\tfrac{1}{2}\Delta u +k(t,x)u \quad  \forall t\in (0,T], x\in \R.
$$
Then for $t\in [0,T]$, $t'\leq t$, $x\in \R$,
$$
u(t,x)=\Esub{x}{\exp\left(\int_0^{t'} k(t-s,B(s))ds \right) u(t-t',B(t'))}.
$$
\end{prop}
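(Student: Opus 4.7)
The plan is to derive the Feynman-Kac identity as the martingale property of a suitable exponential functional of Brownian motion. Fix $t\in(0,T]$, $x\in\R$, and $t'\in[0,t]$ (the case $t=0$ forces $t'=0$ and is trivial). Under $\mathbb P_x$, I would introduce
\[
A_s := \int_0^s k(t-r,B(r))\,dr, \qquad M_s := e^{A_s}\,u(t-s,B(s)), \qquad s\in[0,t'],
\]
and aim to show that $(M_s)_{s\in[0,t']}$ is a martingale. Given this, $M_0 = u(t,x)$ together with $M_0 = \mathbb E_x[M_{t'}]$ immediately yields the claimed formula.

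First suppose $t'<t$, so that $(s,y)\mapsto u(t-s,y)$ is $C^{1,2}$ on a neighbourhood of $[0,t']\times\R$ by hypothesis (the relevant time range is $t-s\in[t-t',t]\subset(0,T]$). Applying It\^o's formula to $M_s = F(A_s,s,B(s))$ with $F(a,\tau,y) = e^a u(t-\tau,y)$, and using that $A$ has bounded variation, yields
\[
dM_s = e^{A_s}\Bigl[k(t-s,B(s))\,u(t-s,B(s)) - \tfrac{\partial u}{\partial t}(t-s,B(s)) + \tfrac{1}{2}\Delta u(t-s,B(s))\Bigr]ds + e^{A_s}\,\tfrac{\partial u}{\partial x}(t-s,B(s))\,dB(s).
\]
By the PDE $\partial_t u = \tfrac{1}{2}\Delta u + ku$, the bracketed drift vanishes identically, so $M$ is a continuous local martingale. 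Because $k$ and $u$ are bounded on $[0,T]\times\R$, I get the deterministic bound $|M_s|\leq e^{T\|k\|_\infty}\|u\|_\infty$ uniformly in $s$; a bounded local martingale is a true martingale, so $\mathbb E_x[M_{t'}] = M_0$, and the identity is established for $t'<t$.

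The remaining case $t'=t$ requires a short approximation, because $u$ is only assumed continuous (not smooth) at time $0$, so It\^o's formula cannot be applied directly up to $s=t$. I would apply the $t'<t$ case with $t'$ replaced by $t-\eps$ for $\eps>0$, giving
\[
u(t,x) = \mathbb E_x\Bigl[e^{A_{t-\eps}}\,u(\eps,B(t-\eps))\Bigr],
\]
and then let $\eps\downarrow 0$. Continuity of $u$ on $[0,T]\times\R$, continuity of $s\mapsto B(s)$ and of $s\mapsto A_s$, and the uniform bound above permit passage to the limit by dominated convergence, yielding the formula at $t'=t$.

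The essentially only technical point is confirming that the local martingale $M$ is genuinely a martingale, and the boundedness of $k$ and $u$ makes this automatic; no control on the spatial derivatives of $u$ (which could conceivably blow up at $t=0$) is needed. The drift cancellation from the PDE is the heart of the argument, and both the main case and the endpoint approximation follow cleanly once the process $M$ is chosen correctly.
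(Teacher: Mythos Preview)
Your argument is correct: the It\^o computation showing that $M_s=e^{A_s}u(t-s,B(s))$ has zero drift because of the PDE, combined with the uniform bound $|M_s|\le e^{T\|k\|_\infty}\|u\|_\infty$ to upgrade from local martingale to martingale, is exactly the standard derivation, and your endpoint approximation for $t'=t$ via dominated convergence is the right way to handle the lack of smoothness at time $0$.

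The paper itself does not prove this proposition; it simply cites Theorem~3.3 in Chapter~4 of Durrett's \emph{Stochastic Calculus: A Practical Introduction}. Your proof is essentially the argument behind that reference, so there is nothing to compare beyond noting that you have supplied what the paper takes as a black box.
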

\begin{proof}
This is Theorem 3.3 in Chapter 4 of \cite{durrett1996}.
\end{proof}
Now let $u$ denote the solution to~\eqref{nonlocal_fkpp}.
Using~\eqref{eq:u_bound},
and since $u$ is smooth on $(0,\infty)\times \R$,
 we can apply Proposition~\ref{prop:fk} with $k=1-\phi\ast u$ to obtain that if $0\leq t'< t$ and $x\in \R$,
\begin{equation} \label{feynmankac}
u(t,x)=\Esub{x}{\exp\left(\int_0^{t'} \left(1-\phi \ast u (t-s,B(s))\right)ds \right) u(t-t',B(t'))}.
\end{equation}
By the maximum principle for linear parabolic equations, and by~\eqref{eq:u_bound}, for $t\leq 1$,
$$
\Esub{x}{e^{t(1-e\|u_0\|_\infty)}u_0(B(t))}\leq u(t,x)\leq \Esub{x}{e^t u_0(B(t))}.
$$
Therefore by letting 
$t'\to t$ in~\eqref{feynmankac}, we have
\begin{equation} \label{feynmankac0}
u(t,x)=\Esub{x}{\exp\left(\int_0^{t} \left(1-\phi \ast u (t-s,B(s))\right)ds \right) u_0(B(t))}.
\end{equation}

We shall also need the following elementary facts about the Gaussian distribution.
\begin{lem}
If $Z\sim N(0,1)$, then for $x>0$,
\begin{equation} \label{eq:gaussiantail}
\p{Z>x}\leq e^{-x^2/2}.
\end{equation}
For $x>0$,
\begin{equation} \label{eq:gaussiantail2}
\p{Z>x}\leq \frac{1}{\sqrt{2\pi}}\frac{1}{x}e^{-x^2/2}.
\end{equation}
For $0\leq x\leq y$,
\begin{equation} \label{eq:Btbound}
\frac{y-x}{\sqrt{2\pi t}}e^{-\frac{y^2}{2t}}\leq \psub{0}{B(t)\in [x,y]}\leq \frac{y-x}{\sqrt{2\pi t}}e^{-\frac{x^2}{2t}}.
\end{equation}
\end{lem}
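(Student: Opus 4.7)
The plan is to prove the three bounds independently, each as a short calculation on the standard Gaussian density; none of them should present a genuine obstacle, the interest is only in having clean statements available as black boxes in later proofs.

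For \eqref{eq:gaussiantail2} I would start from $\p{Z>x}=\frac{1}{\sqrt{2\pi}}\int_x^\infty e^{-y^2/2}dy$ and use the classical Mills-ratio trick: for $y\geq x>0$ one has $1\leq y/x$, so
\[
\int_x^\infty e^{-y^2/2}\,dy \;\leq\; \frac{1}{x}\int_x^\infty y\,e^{-y^2/2}\,dy \;=\; \frac{1}{x}e^{-x^2/2},
\]
and dividing by $\sqrt{2\pi}$ gives the bound. For \eqref{eq:gaussiantail} the cleanest route is a Chernoff bound: for any $\lambda>0$, Markov's inequality gives $\p{Z>x}\leq e^{-\lambda x}\E{e^{\lambda Z}}=e^{\lambda^2/2-\lambda x}$, and choosing $\lambda=x$ yields $e^{-x^2/2}$. (One could alternatively observe that for $x\geq 1/\sqrt{2\pi}$ the bound follows from \eqref{eq:gaussiantail2}, while for $0<x<1/\sqrt{2\pi}$ one has $\p{Z>x}<1/2<e^{-x^2/2}$, but the Chernoff argument is more transparent.)

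For \eqref{eq:Btbound} I would use that $B(t)$ under $\mathbb{P}_0$ is $N(0,t)$, so $\psub{0}{B(t)\in[x,y]}=\frac{1}{\sqrt{2\pi t}}\int_x^y e^{-z^2/(2t)}\,dz$. Since $z\mapsto e^{-z^2/(2t)}$ is non-increasing on $[0,\infty)$, for $0\leq x\leq z\leq y$ one has
\[
e^{-y^2/(2t)} \;\leq\; e^{-z^2/(2t)} \;\leq\; e^{-x^2/(2t)},
\]
and integrating over $z\in[x,y]$ and dividing by $\sqrt{2\pi t}$ gives both inequalities simultaneously. No step requires anything beyond monotonicity of the Gaussian density and an elementary integration, so the only care needed is in writing the three pieces cleanly and in the right order.
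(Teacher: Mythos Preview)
Your proposal is correct and matches the paper's approach: the paper also proves \eqref{eq:gaussiantail} by a Chernoff bound, cites a reference for \eqref{eq:gaussiantail2} (where the standard Mills-ratio argument you give is exactly what appears), and deduces \eqref{eq:Btbound} directly from the density of $B(t)$. There is nothing to add.
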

\begin{proof}
The proof of~\eqref{eq:gaussiantail} is by a Chernoff bound. 
A proof of~\eqref{eq:gaussiantail2} is in Lemma 12.9 of \cite{BMbook}.
Finally,~\eqref{eq:Btbound} holds since the density of $B(t)$ is given by $f_{0,t}(z):=\frac{1}{\sqrt{2 \pi t}}e^{-z^2/(2t)}$.
\end{proof}

\section{Preliminary results} \label{sec:prel}

We take $\eta$, $\sigma>0$ and assume from now on that $\phi$ satisfies assumption~\eqref{eq:phi_cond} with this choice of $\eta$ and $\sigma$.
Our first result is a global bound on $u$; this is also proved in Theorem~1.2 of \cite{hamel2014} using PDE methods.
We include a probabilistic proof here as many of the same ideas will be used later on in this section.
\begin{prop} \label{prop:globalbound}
Suppose $u_0\in L^\infty (\R)$ and $u_0\geq 0$ and let $u$ denote the solution of~\eqref{nonlocal_fkpp}.
Then there exists $M=M(\|u_0\|_\infty, \eta, \sigma)<\infty$ such that
$$ 0\leq u(t,x)\leq M \quad \forall t\geq 0, \, x\in \R. $$
\end{prop}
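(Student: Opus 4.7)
The plan is to prove the lower bound and upper bound separately. The lower bound $u \geq 0$ is immediate from the Feynman--Kac representation~\eqref{feynmankac0}: the exponential factor is positive and $u_0 \geq 0$, so the expectation of a nonnegative random variable is nonnegative.

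For the upper bound, the absence of a maximum principle means we cannot argue directly that $u$ is bounded by $\max(\|u_0\|_\infty, 1)$. Instead, the strategy is to exploit the interplay between the Feynman--Kac formula~\eqref{feynmankac0} and the pointwise lower bound
\[
\phi \ast u(t,x) \;\geq\; \eta \int_{x-\sigma}^{x+\sigma} u(t,y)\,dy
\]
which follows from assumption~\eqref{eq:phi_cond}. The qualitative picture is that once $u$ grows large on a spatial scale comparable to $\sigma$, the convolution $\phi \ast u$ is large there, which drives down the exponential factor in~\eqref{feynmankac0} and caps the size of $u$.

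I would proceed by contradiction: suppose there exist $(t_0, x_0)$ with $u(t_0, x_0) \geq M$ for $M = M(\|u_0\|_\infty, \eta, \sigma)$ arbitrarily large, and derive a contradiction in three steps. First, apply Feynman--Kac over a short horizon $\delta$ to obtain
\[
M \;\leq\; u(t_0, x_0) \;\leq\; e^\delta\, \Esub{x_0}{u(t_0 - \delta, B(\delta))};
\]
splitting the expectation according to whether $|B(\delta) - x_0|$ is small and using the Gaussian tail bound~\eqref{eq:Btbound} together with the a priori bound~\eqref{eq:u_bound} on $u(t_0 - \delta, \cdot)$ over intermediate times, deduce that $u(t_0 - \delta, \cdot)$ must exceed a constant multiple of $M$ on some spatial interval of width $\geq 2\sigma$. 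Second, use $\phi \geq \eta$ on $(-\sigma, \sigma)$ to turn this into a lower bound $\phi \ast u \geq cM$ on a space--time neighborhood around $(t_0 - \delta, x_0)$, where $c$ depends on $\eta$ and $\sigma$. Third, re-insert this lower bound into~\eqref{feynmankac0} applied at $(t_0, x_0)$ with a slightly larger horizon: on the event that the Brownian motion stays in the identified neighborhood for a time of order one (which has probability bounded below), the exponential is at most $e^{-c' M}$, so the total contribution is crushed to a value much less than $M$ once $M$ is large enough.

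The main obstacle is the first step: promoting a single large pointwise value $u(t_0, x_0) \geq M$ to a lower bound on a spatial strip wider than $\sigma$ at some earlier time. This is a form of parabolic regularity, and its probabilistic implementation requires balancing the short horizon $\delta$ (so the Brownian motion concentrates near $x_0$) against the need to cover a strip of fixed width $\sigma$, while the ambient a priori bound $\|u(t_0 - \delta, \cdot)\|_\infty$ is only known to be at most $e^{t_0 - \delta}\|u_0\|_\infty$. Once this localization is established in terms of $\eta$ and $\sigma$ alone, the remaining steps reduce to a quantitative estimate on an exponential functional of Brownian motion, for which the bounds~\eqref{eq:gaussiantail}--\eqref{eq:Btbound} suffice.
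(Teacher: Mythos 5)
Your lower bound and your overall heuristic (large local mass forces $\phi\ast u$ to be large, which suppresses growth through the Feynman--Kac exponential) are in the same spirit as the paper, but your contradiction scheme has a genuine gap at exactly the point you flag as ``the main obstacle'', and that obstacle cannot be deferred. From $M\leq e^\delta\,\Esub{x_0}{u(t_0-\delta,B(\delta))}$ you cannot conclude that $u(t_0-\delta,\cdot)$ exceeds a constant multiple of $M$ on an interval of width $2\sigma$: the only sup bound available at time $t_0-\delta$ is $e^{t_0-\delta}\|u_0\|_\infty$ from~\eqref{eq:u_bound}, which is not uniform in $t_0$, so the Gaussian average could be produced by a spike of $u(t_0-\delta,\cdot)$ of width as small as order $Me^{-t_0}$, far below $\sigma$. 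Since $M$ must depend only on $(\|u_0\|_\infty,\eta,\sigma)$ while $t_0$ is arbitrary, this step fails as stated; and you cannot invoke the uniform-continuity estimate of Lemma~\ref{lem:abscty} to supply the missing ``parabolic regularity'', because its proof uses Proposition~\ref{prop:globalbound} itself. There is a second gap in your step 3: even granting $\phi\ast u\geq cM$ on a slice around time $t_0-\delta$, crushing the Feynman--Kac exponential requires $\phi\ast u$ to stay of order $M$ along the Brownian path over a time interval of length of order one (or at least of length much larger than $1/M$), and nothing in your sketch prevents the local mass from collapsing immediately after that slice; moreover, propagating a \emph{lower} bound on $u$ forward in time via Feynman--Kac itself needs an upper bound on $\phi\ast u$, which is the very thing being proved, so that route is circular.

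The paper avoids both problems by never working with pointwise values until the very end: the quantity it controls is the windowed mass $\int_{-\sigma/4}^{\sigma/4}u(t,x+y)\,dy$, and it shows by induction over time steps of length $\delta$ that the bound ``$\leq C$ for all $x$'' propagates from time $t$ to time $t+\delta$. The induction step is a dichotomy: either the mass in the window of width $\sigma$ around $x_0$ stays at least $C/2$ throughout $[t,t+\delta]$, in which case paths staying within $\sigma/4$ of their start accumulate $\int_0^\delta\phi\ast u\geq\tfrac12 C\eta\delta$ and the Feynman--Kac exponential plus a reflection-principle estimate keeps the window mass below $C$; or the mass dips below $C/2$ at some time in the step, after which it can grow by at most a factor $e^\delta$ in the remaining time. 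The pointwise bound is then recovered by one application of Feynman--Kac over a unit time, summing the Gaussian weight over a lattice of windows of width $\sigma/2$. If you wish to keep a contradiction framing, the object you contradict must be the supremum over $x$ of this windowed mass, not a single pointwise value; once you do that, you are essentially reconstructing the paper's induction.
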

\begin{proof}
Take $\delta>0$ sufficiently small that $e^\delta<4/3$ and $e^{-\sigma^2/(32\delta)}<1/16$, and take $C>\max(\frac{1}{2}\sigma \|u_0\|_\infty ,\frac{2}{\eta \delta}\log 2)$.

For some $t\geq 0$, suppose that $\int_{-\sigma/4}^{\sigma/4}u(t,x+y)dy\leq C$ $\forall x\in \R$.
For $x_0\in \R$ fixed, we consider two cases:
\begin{enumerate}
\item $\int_{-\sigma/2}^{\sigma/2}u(t+s,x_0+y)dy\geq C/2$ for all $s\in [0,\delta]$
\item $\int_{-\sigma/2}^{\sigma/2}u(t+s_0,x_0+y)dy< C/2$ for some $s_0\in [0,\delta]$.
\end{enumerate}
We shall consider each case separately; in each we aim to show that
$\int_{-\sigma/4}^{\sigma/4}u(t+\delta,x_0+y)dy\leq C$.

We begin with case (1).
For $y\in [-\sigma/4,\sigma/4]$,
suppose $|B(s)-(x_0+y)|\leq \sigma/4$ $\forall s\in [0,\delta]$. Then $B(s)\in [x_0-\sigma/2,x_0+\sigma/2]$ $\forall s\in [0,\delta]$ and so
\begin{align*}
\int_0^\delta \phi \ast u(t+\delta-s,B(s))ds&\geq \int_0^\delta \int_{-\sigma}^\sigma \eta u(t+\delta-s,B(s)+y)dy ds\\
&\geq \eta\int_0^\delta \int_{-\sigma/2}^{\sigma/2}u(t+\delta-s,x_0+y)dy ds\\
&\geq \tfrac{1}{2}C\eta \delta,
\end{align*}
where the first inequality holds since $\phi\geq 0$, $u\geq 0$, and $\phi \geq \eta$ a.e.~on $(-\sigma,\sigma)$
and the last inequality holds by our assumption in case (1).
Hence by the Feynman-Kac formula \eqref{feynmankac}, for $y\in [-\sigma/4,\sigma/4]$, since $\phi \ast u\geq 0$,
$$
u(t+\delta,x_0+y)\leq \Esub{x_0+y}{u(t,B(\delta))e^\delta (e^{-\frac{1}{2}C\eta \delta}+\I{\sup_{s\in [0,\delta]}|B(s)-B(0)|\geq \sigma/4})}.
$$
Therefore, by Fubini's Theorem,
\begin{align*}
\int_{-\sigma/4}^{\sigma/4}u(t+\delta,x_0+y)dy
&\leq e^{\delta(1-\frac{1}{2}C\eta)}\Esub{x_0}{\int_{-\sigma/4}^{\sigma/4}u(t,B(\delta)+y)dy}\\
&\qquad +e^\delta \Esub{x_0}{\I{\sup_{s\in [0,\delta]}|B(s)-B(0)|\geq \sigma/4}\int_{-\sigma/4}^{\sigma/4}u(t,B(\delta)+y)dy}\\
&\leq Ce^{\delta(1-\frac{1}{2}C\eta)}
+Ce^\delta \psub{0}{\sup_{s\in [0,\delta]}|B(s)|\geq \sigma/4}\\
&\leq Ce^{\delta(1-\frac{1}{2}C\eta)}
+4Ce^\delta \psub{0}{B(1)\geq \sigma/(4\delta^{1/2})}\\
&\leq Ce^\delta (e^{-\frac{1}{2}C\eta\delta}
+4e^{-\sigma^2/(32\delta)}),
\end{align*}
where the second inequality follows since $\int_{-\sigma/4}^{\sigma/4}u(t,x+y)dy\leq C$ $\forall x\in \R$, the third by the reflection principle and the final inequality by \eqref{eq:gaussiantail}.
By our choice of $\delta$ and $C$ at the start of the proof,
we have $e^\delta (e^{-\frac{1}{2}C\eta\delta}
+4e^{-\sigma^2/(32\delta)})<1$ and hence
$\int_{-\sigma/4}^{\sigma/4}u(t+\delta,x_0+y)dy<C$.

We now consider case (2). 
By the Feynman-Kac formula \eqref{feynmankac} and since $\phi \ast u\geq 0$, and then since $s_0\leq \delta$,
we have that for any $x\in \R$ and $y\in [-\sigma/4,\sigma/4]$,
$$
u(t+s_0,x+y)\leq e^{s_0}\Esub{x}{u(t,B(s_0)+y)}\leq e^{\delta}\Esub{x}{u(t,B(s_0)+y)}.
$$
Then by Fubini's Theorem,
\begin{align} \label{eq:daggersec2}
\int_{-\sigma/4}^{\sigma/4}u(t+s_0,x+y)dy
&\leq e^{\delta}\Esub{x}{\int_{-\sigma/4}^{\sigma/4}u(t,B(s_0)+y)dy}
\leq e^\delta C,
\end{align}
since $\int_{-\sigma/4}^{\sigma/4}u(t,x'+y)dy\leq C$ $\forall x'\in \R$.
By the Feynman-Kac formula \eqref{feynmankac} and since $\phi \ast u\geq 0$, for $y\in [-\sigma/4,\sigma/4]$,
$$
u(t+\delta,x_0+y)\leq e^{\delta-s_0}\Esub{x_0}{u(t+s_0,B(\delta-s_0)+y)}.
$$
Hence by Fubini's Theorem,
\begin{align*}
\int_{-\sigma/4}^{\sigma/4}u(t+\delta,x_0+y)dy
&\leq e^{\delta}\Esub{x_0}{\int_{-\sigma/4}^{\sigma/4}u(t+s_0,B(\delta-s_0)+y)dy}\\
&\leq e^\delta (\tfrac{1}{2}C +e^\delta C\psub{x_0}{|B(\delta-s_0)-x_0|>\sigma/4}),
\end{align*}
by~\eqref{eq:daggersec2} and
since for $x\in [x_0-\sigma/4,x_0+\sigma/4]$, 
$$\int_{-\sigma/4}^{\sigma/4}u(t+s_0,x+y)dy\leq \int_{-\sigma/2}^{\sigma/2}u(t+s_0,x_0+y)dy< C/2$$ by our assumption in case (2).
Therefore, by the reflection principle,
\begin{align*}
\int_{-\sigma/4}^{\sigma/4}u(t+\delta,x_0+y)dy
&\leq C e^\delta \left(\tfrac{1}{2} +2e^\delta \psub{0}{B(1)>\sigma/(4\delta^{1/2})}\right)\\
&\leq C e^\delta (\tfrac{1}{2} +2e ^\delta e^{-\sigma^2/(32\delta)}),
\end{align*}
by \eqref{eq:gaussiantail}.
Again by our choice of $\delta$ and $C$ at the start of the proof, it follows that
$\int_{-\sigma/4}^{\sigma/4}u(t+\delta,x_0+y)dy<C$.

By combining cases (1) and (2) for every $x_0\in \R$ we have that if for some $t\geq 0$, $\int_{-\sigma/4}^{\sigma/4}u(t,x+y)dy\leq C$ $\forall x\in \R$ then $\int_{-\sigma/4}^{\sigma/4}u(t+\delta,x+y)dy\leq C$ $\forall x\in \R$.
Note that $\int_{-\sigma/4}^{\sigma/4}u(0,x+y)dy\leq \frac{1}{2}\sigma\|u_0\|_\infty \leq C$ $\forall x\in \R$ by our choice of $C$.
Therefore $\forall k\in \N$, $x\in \R$,
$\int_{-\sigma/4}^{\sigma/4}u(k\delta,x+y)dy\leq C$.
It follows that for any $t\geq 0$, $x\in \R$, by the Feynman-Kac formula \eqref{feynmankac} and Fubini's Theorem,
$$
\int_{-\sigma/4}^{\sigma/4}u(t,x+y)dy\leq e^{t-\delta \lfloor t/\delta \rfloor}\Esub{x}{\int_{-\sigma/4}^{\sigma/4}u(\delta \lfloor t/\delta \rfloor,B(t-\delta \lfloor t/\delta \rfloor)+y)dy}
\leq e^\delta C.
$$
Hence for $t\geq 1$, $x\in \R$, by \eqref{feynmankac},
\begin{align*}
u(t,x)&\leq e\Esub{x}{u(t-1,B(1))}\\
&\leq e\sum_{k\in\Z}\tfrac{1}{\sqrt{2\pi}}e^{-k^2\sigma^2/32}\int_{-\sigma/4}^{\sigma/4} u(t-1,\tfrac{k\sigma}{4}+y)dy\\
&\leq eCe^\delta \sum_{k\in \Z}\tfrac{1}{\sqrt{2\pi}}e^{-k^2\sigma^2/32}
<\infty.
\end{align*}
Also for $t\leq 1$, $x\in \R$, by~\eqref{feynmankac0},
$$
u(t,x)\leq e^t \Esub{x}{u_0(B(t))}\leq e\|u_0\|_\infty.
$$
The result follows.
\end{proof}
From now on, we take $L<\infty$ and assume that $u_0\in L^\infty(\R)$ with $u_0\geq 0$ and $\|u_0\|_\infty \leq L$, and we shall write $u$ for the solution to~\eqref{nonlocal_fkpp} and $M=M(L,\eta,\sigma)$ as in Proposition~\ref{prop:globalbound}.
We can use the Feynman-Kac formula and the global bound on $u$ to prove a form of uniform continuity.
\begin{lem} \label{lem:abscty}
For $\epsilon>0$ sufficiently small, if $|x-y|<\epsilon^3$ and $t\geq 1$, then $|u(t,x)-u(t,y)|<\epsilon.$
\end{lem}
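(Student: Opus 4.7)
The plan is to apply the Feynman--Kac formula \eqref{feynmankac} over a small time increment $t':=\epsilon^2$, using the global bound $u\leq M$ from Proposition~\ref{prop:globalbound} to control the exponential weight, and a standard $L^1$-estimate on the Brownian transition density (with respect to its centring point) to convert the hypothesis $|x-y|<\epsilon^3$ into a bound on $|u(t,x)-u(t,y)|$.

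First, I would fix $t\geq 1$ and $\epsilon>0$ small enough that $\epsilon^2\leq 1$, and set $t':=\epsilon^2$. Since $\int\phi=1$, $\phi\geq 0$ and $0\leq u\leq M$, we have $0\leq \phi\ast u\leq M$, so the integrand $1-\phi\ast u$ lies pointwise in $[1-M,1]$. Hence the exponential factor appearing in \eqref{feynmankac} satisfies, pathwise,
$$\left|\exp\left(\int_0^{t'}\bigl(1-\phi\ast u(t-s,B(s))\bigr)ds\right)-1\right|\leq C(M)\,t'$$
for some constant $C(M)$ depending only on $M$. Using $u(t-t',\cdot)\leq M$, applying \eqref{feynmankac} at both $z=x$ and $z=y$ gives
$$\bigl|u(t,z)-\mathbb E_z[u(t-t',B(t'))]\bigr|\leq M\,C(M)\,t'=O(\epsilon^2)\qquad(z\in\{x,y\}).$$

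Second, I would compare the two linear expectations directly. Writing $p_s(w):=(2\pi s)^{-1/2}e^{-w^2/(2s)}$ for the Gaussian density,
$$\mathbb E_x[u(t-t',B(t'))]-\mathbb E_y[u(t-t',B(t'))]=\int_\R u(t-t',z)\bigl(p_{t'}(z-x)-p_{t'}(z-y)\bigr)dz,$$
which is bounded in absolute value by $M\,\|p_{t'}(\cdot-x)-p_{t'}(\cdot-y)\|_{L^1(\R)}$. A direct computation using $\partial_u p_s(z-u)=\frac{z-u}{s}p_s(z-u)$ and $\int|w|p_s(w)\,dw=\sqrt{2s/\pi}$ gives $\|p_s(\cdot-x)-p_s(\cdot-y)\|_{L^1}\leq |x-y|\sqrt{2/(\pi s)}$. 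With $|x-y|<\epsilon^3$ and $s=t'=\epsilon^2$, this yields $O(\epsilon^2)$.

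Combining the two steps via the triangle inequality gives $|u(t,x)-u(t,y)|=O(\epsilon^2)$, which is strictly less than $\epsilon$ for $\epsilon$ sufficiently small. The constants are uniform in $t\geq 1$ because $M=M(L,\eta,\sigma)$ does not depend on $t$; the lower bound $t\geq 1$ only enters to guarantee that $t-t'>0$. I do not anticipate a serious obstacle here: the argument is essentially a quantitative version of the semigroup smoothing implicit in the Feynman--Kac representation, and the choice of the exponent $3$ in $\epsilon^3$ is tailored so that the spatial error $|x-y|/\sqrt{t'}$ and the time error $t'$ both come out as $O(\epsilon^2)$ when $t'=\epsilon^2$.
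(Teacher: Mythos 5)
Your proposal is correct and follows essentially the same route as the paper: apply the Feynman--Kac formula over the small time increment $\epsilon^2$, use the global bound $M$ from Proposition~\ref{prop:globalbound} to control the exponential weight (an $O(\epsilon^2)$ error), and reduce to comparing the two Gaussian expectations, which are $O(|x-y|/\epsilon)=O(\epsilon^2)$ apart. The only cosmetic difference is that the paper bounds the Gaussian comparison by splitting at the midpoint $(x+y)/2$ and invoking \eqref{eq:Btbound}, whereas you use the equivalent $L^1$ estimate on the shifted heat kernels.
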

\begin{proof}
Suppose that $\epsilon<\min(1,(Me(\frac{1}{\sqrt{2\pi}}+M))^{-1})$
and suppose that $t\geq 1$ and $|x-y|<\epsilon^3$.
By the Feynman-Kac formula~\eqref{feynmankac},
for any $z\in \R$,
since $0\leq \phi\ast u\leq M$ by Proposition~\ref{prop:globalbound},
$$
e^{\epsilon^2 (1-M)}\Esub{z}{u(t-\epsilon^2,B(\epsilon^2))}
\leq u(t,z)
\leq e^{\epsilon^2}\Esub{z}{u(t-\epsilon^2,B(\epsilon^2))}.
$$
Therefore
\begin{align}\label{eq:pdfint}
u(t,x)-u(t,y)
&\leq e^{\epsilon^2}\Esub{x}{u(t-\epsilon^2,B(\epsilon^2))}
-e^{\epsilon^2(1-M)}\Esub{y}{u(t-\epsilon^2,B(\epsilon^2))} \notag\\
&=e^{\epsilon^2}\left(\Esub{x}{u(t-\epsilon^2,B(\epsilon^2))}-\Esub{y}{u(t-\epsilon^2,B(\epsilon^2))}\right)\notag\\
&\hspace{2cm}+(e^{\epsilon^2}-e^{\epsilon^2(1-M)})\Esub{y}{u(t-\epsilon^2,B(\epsilon^2))}\notag\\
&\leq e^{\epsilon^2}\left(\Esub{x}{u(t-\epsilon^2,B(\epsilon^2))}-\Esub{y}{u(t-\epsilon^2,B(\epsilon^2))}\right)+(e^{\epsilon^2}-e^{\epsilon^2(1-M)})M,
\end{align}
by Proposition~\ref{prop:globalbound}.
Write $f_{\mu,\sigma^2}$ for the density of the Gaussian distribution with mean $\mu$ and variance $\sigma^2$.
If $x\leq y$, then by Proposition~\ref{prop:globalbound},
\begin{align*}
&\Esub{x}{u(t-\epsilon^2,B(\epsilon^2))}-\Esub{y}{u(t-\epsilon^2,B(\epsilon^2))}\\
&\qquad =
\int_{-\infty}^\infty u(t-\epsilon^2,z)(f_{x,\epsilon^2}(z)- f_{y,\epsilon^2}(z))dz\\
&\qquad\leq M\int_{-\infty}^{(x+y)/2}(f_{x,\epsilon^2}(z)- f_{y,\epsilon^2}(z))dz\\
&\qquad= M \left(\psub{x}{B(\epsilon^2)\leq \tfrac{1}{2}(x+y)}-\psub{y}{B(\epsilon^2)\leq \tfrac{1}{2}(x+y)}\right)\\
&\qquad= M \left(\psub{0}{B(\epsilon^2)\leq \tfrac{1}{2}(y-x)}-\psub{0}{B(\epsilon^2)\leq \tfrac{1}{2}(x-y)}\right)\\
&\qquad=M \psub{0}{|B(\epsilon^2)|\leq \tfrac{1}{2}|y-x|}.
\end{align*}
Similarly, if $y\leq x$,
\begin{align*}
&\Esub{x}{u(t-\epsilon^2,B(\epsilon^2))}-\Esub{y}{u(t-\epsilon^2,B(\epsilon^2))}\\
&\qquad\leq 
M \left(\psub{x}{B(\epsilon^2)\geq \tfrac{1}{2}(x+y)}-\psub{y}{B(\epsilon^2)\geq \tfrac{1}{2}(x+y)}\right)\\
&\qquad =M \psub{0}{|B(\epsilon^2)|\leq \tfrac{1}{2}|y-x|}.
\end{align*}
Substituting into \eqref{eq:pdfint},
\begin{align*} 
u(t,x)-u(t,y)
&\leq 
Me^{\epsilon^2}\left(\psub{0}{|B(\epsilon^2)|\leq \tfrac{1}{2}|y-x|}
+1-e^{-M\epsilon^2}\right)\\
&\leq M e \left(\frac{|x-y|}{\sqrt{2\pi \epsilon^2}}+M\epsilon^2\right),
\end{align*}
by \eqref{eq:Btbound} and since $1-e^{-r}\leq r$ for $r\geq 0$.
Since $|x-y|\leq \epsilon^3$, it follows that
$$u(t,x)-u(t,y)\leq Me\left(\tfrac{1}{\sqrt{2\pi}}+M\right)\epsilon^2 \leq \epsilon$$ by our choice of $\epsilon$ at the start of the proof.
By the same argument, $u(t,y)-u(t,x)\leq \epsilon$, and the result follows.
\end{proof}
We now show that if $\delta>0$, and $u$ is small, then $u$ grows exponentially until there is some $x$ nearby with $u(t,x)\geq 1-\delta$.
The proof of this result uses ideas from the proof of Lemma~5.4 in \cite{addario2015}.
\begin{lem} \label{lem:growu}
For $\delta\in (0,1)$, there exist $C=C(\delta)$, $R=R(\delta)$ and $z_0=z_0(\delta)$ such that for $z\in (0,z_0)$,
if $t\geq 1$ and $u(t,x)>z$ then there exist $s\in [t,t+C\log (1/z)]$
and $y\in [x-R,x+R]$ such that $u(s,y)\geq 1-\delta.$
\end{lem}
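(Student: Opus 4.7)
The plan is to argue by contradiction: assuming that $u(s,y)<1-\delta$ throughout a space-time box around $(t,x)$, I apply the Feynman-Kac formula at $(t+T,x)$ to force $u(t+T,x)$ to exceed the global bound $M$ from Proposition~\ref{prop:globalbound}, producing a contradiction as soon as $T$ is a sufficiently large multiple of $\log(1/z)$.

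First, I choose $R=R(\delta)$ large enough that $M\int_{|y|>R/2}\phi(y)\,dy\leq \delta/2$; this is possible since $\phi\in L^1(\R)$. Suppose for contradiction that $u(s,y)<1-\delta$ for every $s\in[t,t+T]$ and every $y\in[x-R,x+R]$. Then for any $y\in[x-R/2,x+R/2]$ and $s$ in the same time range, splitting $\phi\ast u(s,y)=\int_{|y'|\leq R/2}+\int_{|y'|>R/2}$ and using the global bound $u\leq M$ for the tail gives the key localisation estimate
\begin{equation*}
\phi\ast u(s,y)\;\leq\;(1-\delta)+\tfrac{\delta}{2}\;=\;1-\tfrac{\delta}{2}.
\end{equation*}

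Next I use uniform continuity (Lemma~\ref{lem:abscty}) to upgrade the pointwise lower bound $u(t,x)>z$ to $u(t,y)>z/2$ on the neighbourhood $|y-x|<\eta$ with $\eta:=(z/2)^{3}$, valid once $z<z_0$ for some $z_0=z_0(\delta)$. I then apply the Feynman-Kac formula~\eqref{feynmankac} at $(t+T,x)$ with $t'=T$ and restrict the expectation to the event
\begin{equation*}
\mathcal{E}\;=\;\bigl\{\sup_{r\in[0,T]}|B(r)-x|\leq R/2\bigr\}\cap\bigl\{|B(T)-x|\leq\eta\bigr\}.
\end{equation*}
On $\mathcal{E}$ the Feynman-Kac exponent is at least $\delta T/2$ and the terminal factor $u(t,B(T))$ is at least $z/2$, so
\begin{equation*}
u(t+T,x)\;\geq\;\tfrac{z}{2}\,e^{\delta T/2}\,\mathbb{P}_{x}(\mathcal{E}).
\end{equation*}
A standard Brownian estimate (reflection principle plus~\eqref{eq:Btbound}) gives $\mathbb{P}_{x}(\mathcal{E})\geq c_{1}\eta T^{-1/2}e^{-c_{2}T/R^{2}}$ for absolute constants $c_{1},c_{2}>0$.

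Finally, I enlarge $R$ (still depending only on $\delta$) so that $c_{2}/R^{2}\leq \delta/4$; this absorbs the Brownian confinement cost into the exponential growth, yielding
\begin{equation*}
u(t+T,x)\;\geq\;\tfrac{c_{1}}{2}\,z\,\eta\,T^{-1/2}\,e^{\delta T/4}.
\end{equation*}
Since $\eta=(z/2)^{3}$, taking logarithms shows that this quantity exceeds $M$ as soon as $T\geq C\log(1/z)$ for a suitable $C=C(\delta)$, contradicting Proposition~\ref{prop:globalbound}. The main obstacle is balancing two competing scales: the interval of radius $R/2$ on which the convolution $\phi\ast u$ is controlled must be large enough, as a function of $\delta$, that the Brownian-confinement decay $e^{-c_{2}T/R^{2}}$ does not wipe out the exponential gain $e^{\delta T/2}$ even when $T$ is a large multiple of $\log(1/z)$. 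Everything else (the tail of $\phi$, the uniform continuity step, and the Gaussian density estimate) is elementary once this balance is set up.
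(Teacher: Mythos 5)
Your argument is essentially the paper's proof in contrapositive form: your contradiction hypothesis ($u<1-\delta$ on the whole space--time box) is exactly the negation of the paper's case in which $\phi\ast u\geq 1-\tfrac{1}{2}\delta$ somewhere (where the paper extracts a nearby point with $u\geq 1-\delta$ directly from $\int\phi=1$ and the tail bound on $\phi$), and your Feynman--Kac growth step, with the uniform-continuity input from Lemma~\ref{lem:abscty} at scale $(z/2)^3$ and the balance between the gain $e^{\delta T/2}$ and the confinement cost $e^{-cT/R^2}$, is the paper's other case almost verbatim (the paper stops once the lower bound exceeds $1-\delta$ rather than the global bound $M$, but both yield the contradiction). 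The one step you should repair is the justification of $\mathbb{P}_x(\mathcal{E})\geq c_1\eta T^{-1/2}e^{-c_2T/R^2}$: the reflection principle together with \eqref{eq:Btbound} cannot give this, since for $T\gg R^2$ (which is the relevant regime, as $T=C\log(1/z)\to\infty$ while $R=R(\delta)$ is fixed) the confinement probability is exponentially small and the reflection principle only bounds exit probabilities from above. The bound you state is nonetheless true, with $c_2=\pi^2/2$, via the tube estimate \eqref{eq:BMtube} (Lemma 5 of~\cite{roberts2015}, which is what the paper uses) after Brownian scaling, or via an iterated Markov-property argument; with that substitution your choice of $R$ so that $c_2/R^2\leq\delta/4$ matches the paper's condition $R^2>\pi^2/\delta$ and the rest of your computation, including $C>16/\delta$ and the choice of $z_0$, goes through.
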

\begin{proof}
Take $R>0$ sufficiently large that
$R^2>\pi^2/\delta$ and
$\int_{|r|\geq R/2}\phi(r)dr<\delta/(2M)$.
Then take $C>8(\delta - \pi^2 R^{-2})^{-1}$.
The proof is divided into the following two cases:
\begin{enumerate}
\item For each $s\in [t,t+C\log (1/z)]$, $y\in [x-\tfrac{1}{2}R,x+\tfrac{1}{2}R]$, we have $\phi \ast u (s,y)<1-\tfrac{1}{2}\delta$.
\item There exist $s_0\in [t,t+C\log (1/z)]$ and $y_0\in [x-\tfrac{1}{2}R,x+\tfrac{1}{2}R]$ such that $\phi \ast u (s_0,y_0)\geq 1-\tfrac{1}{2}\delta$.
\end{enumerate}
We shall begin with case (1).
Since $u(t,x)>z$, by Lemma~\ref{lem:abscty} with $\epsilon=\frac{1}{2}z$ we have that if $z$ is sufficiently small then 
\begin{equation} \label{eq:z3_mass}
u(t,y)>\tfrac{1}{2}z \quad \forall y\in [x-\tfrac{1}{8} z^3,x+\tfrac{1}{8} z^3].
\end{equation}
Also, if $|B(s)-x|\leq \tfrac{1}{2}R$ $\forall s\in [0,C\log (1/z)]$ then 
\begin{equation} \label{eq:int_est_phiu}
\int_0^{C\log(1/z)}\phi\ast u(t+C\log(1/z)-s,B(s))ds
<C\log (1/z)(1-\tfrac{1}{2}\delta)
\end{equation}
by our assumption in case (1).
Therefore by the Feynman-Kac formula \eqref{feynmankac}, 
\begin{align*}
&u(t+C\log (1/z),x)\\
&=\Esub{x}{\exp\left(\int_0^{C\log(1/z)}(1-\phi\ast u(t+C\log(1/z)-s,B(s)))ds \right) u(t,B(C\log(1/z)))}\\
&\geq \tfrac{1}{2}z e^{C\log (1/z)\delta /2}
\psub{x}{|B(C\log (1/z))-x|\leq \tfrac{1}{8}z^3, |B(s)-x|\leq \tfrac{1}{2}R \, \,\forall s \leq C\log (1/z)}\\
&= \tfrac{1}{2}z e^{C\log (1/z)\delta /2}
\psub{0}{|B(4CR^{-2}\log (1/z))|\leq \tfrac{1}{4}z^3 R^{-1}, |B(s)|\leq 1 \, \,\forall s \leq 4CR^{-2}\log (1/z)},
\end{align*}
where the first inequality follows by~\eqref{eq:z3_mass} and~\eqref{eq:int_est_phiu} and since $u\geq 0$, and the last line follows by Brownian scaling. 

By Lemma 5 in \cite{roberts2015} (which gives a convenient statement of this well known estimate), for $T>0$, there exists a constant $c_T>0$ such that for any $t\geq T$ and $0\leq x_0\leq 1$,
\begin{align} \label{eq:BMtube}
\psub{0}{|B(s)|\leq 1 \,\, \forall s \leq t, \,|B(t)|\leq x_0}
&\geq c_T e^{-\pi^2 t/8}\int_{-x_0}^{x_0}\cos (\pi \nu/2)d\nu \notag \\
&\geq c_T e^{-\pi^2 t/8}\min (x_0,\tfrac{2}{3}), 
\end{align}
since $\cos(\pi \nu/2)\geq 1/2$ for $|\nu|\leq 2/3$.
Hence for $z$ sufficiently small
that $4CR^{-2}\log(1/z)\geq 1$ and $\frac{1}{4}z^3 R^{-1}\leq \frac{2}{3}$,
\begin{align*}
u(t+C\log (1/z),x)
&\geq\tfrac{1}{2}z e^{C\log (1/z)\delta /2}
c_1 e^{-\pi^2 4CR^{-2}\log (1/z)/8}\tfrac{1}{4}z^3 R^{-1}\\
&= \tfrac{1}{8}c_1 R^{-1}z^{4+\pi^2 CR^{-2}/2-C\delta/2}. 
\end{align*}
Recall that $R^2>\pi^2/\delta$ and $C>8(\delta - \pi^2 R^{-2})^{-1}$, so
$a:= 4+C(\pi^2 R^{-2}-\delta)/2<0$.
It follows that as long as $z\leq (8c_1^{-1}R(1-\delta))^{1/a}$, then
$u(t+C\log (1/z),x)\geq 1-\delta$.

We now consider case (2). Recall that $\int_{|r|\geq R/2}\phi(r)dr<\delta/(2M)$.
By Proposition~\ref{prop:globalbound} and the assumption that we are in case (2), 
\begin{equation} \label{eq:phi_far}
M\int_{|r|\geq R/2}\phi(r)dr+\int_{|r|\leq R/2}\phi(r)u(s_0,y_0-r)dr
\geq \phi \ast u (s_0,y_0)\geq 1-\tfrac{1}{2}\delta,
\end{equation}
and therefore
$$
\int_{|r|\leq R/2}\phi(r)u(s_0,y_0-r)dr
\geq  1-\delta.
$$
Since $\int_{-\infty}^\infty \phi(r)dr=1$ and $\phi\geq 0$,
it follows that there exists $r\in [-\frac{1}{2}R,\frac{1}{2}R]$ such that $u(s_0,y_0-r)\geq 1-\delta$. 
Since $s_0\in [t,t+C\log (1/z)]$ and $|(y_0-r)-x|\leq R$, this completes the proof.
\end{proof}
Our next lemma shows that for any $\epsilon>0$, $u$ spreads at speed at least $\sqrt 2 -\epsilon$.
\begin{lem} \label{lem:travelu}
For $0\leq c<\sqrt 2$, there exists $m^*=m^*(c)\in (0,1/2)$ and $t^*=t^*(c)<\infty$ such that for $T\geq t^*$ and $t\geq 1$, if $u(t,x)\geq m^*$ and $|x'-x|\leq cT$, then
$u(t+T,x')\geq m^*$. 
\end{lem}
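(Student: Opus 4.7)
The plan is to combine \refL{lem:growu}, which promotes the hypothesis $u(t,x)\ge m^*$ into a near-saturated seed, with a Feynman-Kac lower bound evaluated on Brownian paths confined to a tube. Concretely, choose $\delta=\delta(c)\in(0,1/4)$ small, to be specified later. \refL{lem:growu} yields $s_0\in[t,\,t+C(\delta)\log(1/m^*)]$ and $y_0$ with $|y_0-x|\le R(\delta)$ such that $u(s_0,y_0)\ge 1-\delta$; \refL{lem:abscty} then gives $u(s_0,z)\ge 1-2\delta$ for all $z\in[y_0-\delta^3,\,y_0+\delta^3]$, provided $\delta$ is sufficiently small in terms of $M$.

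Write $T':=t+T-s_0\in[T-C(\delta)\log(1/m^*),\,T]$ and apply the Feynman-Kac formula \eqref{feynmankac}, restricting to the event $A$ that the Brownian motion stays within a tube of radius $R'=R'(c)$ about the straight segment from $x'$ to $y_0$ and terminates in $[y_0-\delta^3,\,y_0+\delta^3]$:
\[u(t+T,x')\ge(1-2\delta)\,\Esub{x'}{\exp\!\Big(\int_0^{T'}(1-\phi\ast u(t+T-s,B(s)))\,ds\Big)\I{A}}.\]
Combining \eqref{eq:Btbound} for the endpoint with a Brownian-scaled application of \eqref{eq:BMtube} for the tube gives
\[\p{A}\gtrsim\delta^3\,T^{-1/2}\exp\!\Big(-\tfrac{(x'-y_0)^2}{2T'}-\tfrac{\pi^2 T'}{8(R')^2}\Big),\]
and $(x'-y_0)^2/(2T')\le c^2T/2+O(\log(1/m^*))$ follows from $|x'-y_0|\le cT+R(\delta)$ and $T-T'=O(\log(1/m^*))$.

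The main obstacle is controlling the Feynman-Kac exponent. The naive estimate $\phi\ast u\le M$ from \refP{prop:globalbound} only bounds the exponent below by $(1-M)T'$, which cannot dominate the displacement cost $c^2T/2$ when $M\ge 1$ and $c$ is close to $\sqrt 2$. My plan to overcome this is to route the tube above the potential front: the linear upper bound $u(s,y)\le e^s\,\|u_0\|_\infty\,\psub{y}{B(s)<L}$ forces $u$, and hence $\phi\ast u$, to be super-exponentially small at positions $y\gg L+\sqrt 2\,s$, so if the tube centre is lifted above $L+\sqrt 2(t+T-s)$ over a long middle portion of $[0,T']$, the Feynman-Kac exponent should pick up a contribution linear in $T$ rather than $0$. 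Because $c<\sqrt 2$, the additional Gaussian cost of this detour can be arranged strictly smaller than the gain, producing a net exponent that is uniformly positive in $T$. Balancing the entry/exit slopes, the tube radius $R'$, and the parameters $\delta$, $m^*$, $t^*$ (all depending only on $c$) to make this trade-off work is where the delicate technical work lies.
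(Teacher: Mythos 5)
There is a genuine gap, and it is exactly at the point you flag as ``where the delicate technical work lies''. Your plan for the unfavourable case is to route the Feynman--Kac tube above the front so that $\phi\ast u$ is negligible along it. This cannot work in the generality the lemma requires. First, the hypothesis only gives $u(t,x)\ge m^*$ at a single point, with $t\ge 1$ arbitrary and unrelated to $T$; the points $x$ and $x'$ may lie arbitrarily far behind the front position $\sqrt 2(t+T-s)$ (e.g.\ $x,x'=\mathcal O(1)$ while $t$ is huge), so lifting the tube above the front requires a vertical detour of height $d$ that is unbounded in terms of $T$, at Gaussian cost $\exp(-\Theta(d^2/T))$, which cannot be compensated by the at most $e^{T'}$ gain in the exponent. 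Second, even near the front the inference ``$u$ small ahead of the front, hence $\phi\ast u$ super-exponentially small there'' is false for a general $\phi$ satisfying only~\eqref{eq:phi_cond}: the convolution at a point ahead of the front picks up the $\mathcal O(M)$ mass of $u$ behind the front through the tail of $\phi$, so all one can say is $\phi\ast u\le M\int_{|r|\ge D}\phi+\sup_{|r|\le D}u(\cdot,\cdot-r)$, which is below any fixed $\epsilon$ for $D$ a large constant but with no rate; indeed for the heavy-tailed kernels of Theorems~\ref{thm:heavylower}--\ref{thm:heavyweak}, for which this lemma must still hold, $\phi\ast u$ ahead of the front decays only polynomially in the distance. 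So your mechanism for making the exponent close to $T'$ is unavailable precisely when it is needed.

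The paper's proof supplies the missing idea: a dichotomy (in fact a trichotomy in the timing) based on whether $\phi\ast u<m_0$ along the \emph{tilted} tube joining $x$ to $x'$. If it is, the Feynman--Kac growth $e^{(1-m_0)(T-1)}$ beats the tilted-tube cost $e^{-(\frac12 c^2+\frac{\pi^2}{8(R-1)^2})(T-1)}$ because $c<\sqrt 2$ (the choice~\eqref{eq:m0cond}), with no need to go above the front and with the crude bound $\phi\ast u\le M$ never entering the exponent over a long time interval. If instead $\phi\ast u(t+s_0,y_0)\ge m_0$ at some point on the tube, then since $\int\phi=1$, $u\le M$ and $M\int_{|r|\ge R'}\phi<m_0/2$, there is a point within fixed distance $R'$ of $y_0$ where $u\ge m_0/2$: the largeness of the nonlinear term is converted into a fresh seed of fixed height. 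One then finishes either by a single short Feynman--Kac step (if $s_0\in[T-3,T-1]$, cost $e^{-3M}$ times a fixed Gaussian probability, absorbed into the choice~\eqref{eq:m*def1} of $m^*$) or by restarting the tube estimate from the new seed (if $s_0\le T-3$, using~\eqref{eq:m*def2}). Your use of Lemma~\ref{lem:growu} to boost the initial seed to $1-\delta$ is harmless but not needed; what is needed, and absent from your proposal, is this re-seeding argument for the case where $\phi\ast u$ is not small along the route.
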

\begin{proof}
We shall use the following estimate on the probability that a Brownian motion stays inside a tilted tube.

By Girsanov's Theorem, for $b\in \R$, $R_0>0$, $r\in [0,R_0]$ and $t>0$,
\begin{align*}
&\psub{0}{|B(s)-bs|\leq R_0\, \forall s\leq t, |B(t)-bt|\leq r}\\
&\hspace{0.5cm}= \Esub{0}{e^{-b(B(t)+bt)+\frac{1}{2}b^2 t}\I{|B(s)|\leq R_0 \, \forall s\leq t, |B(t)|\leq r}}\\
&\hspace{0.5cm}\geq e^{-\frac{1}{2}b^2 t -bR_0}\psub{0}{|B(s)|\leq R_0\, \forall s\leq t, |B(t)|\leq r}\\
&\hspace{0.5cm}= e^{-\frac{1}{2}b^2 t -bR_0}\psub{0}{|B(s)|\leq 1\, \forall s\leq t R_0^{-2}, |B(t R_0^{-2})|\leq r R_0^{-1}}
\end{align*}
by Brownian scaling.
Hence if also $t\geq 1$ and $|b|\leq \sqrt 2$, then by \eqref{eq:BMtube},
\begin{align} \label{eq:tilttube}
\psub{0}{|B(s)-bs|\leq R_0\, \forall s\leq t, |B(t)-bt|\leq r}
&\geq e^{-\frac{1}{2}b^2 t-\sqrt 2 R_0}c_{R_0^{-2}}  e^{-\frac{1}{8}\pi^2 R_0^{-2}t}\tfrac{2}{3}\tfrac{r}{R_0} \notag \\
&= \delta_{R_0} r e^{-\frac{1}{2}b^2 t-\frac{1}{8}\pi^2 R_0^{-2}t},
\end{align}
where $\delta_{R_0}:= e^{-\sqrt 2 R_0} \tfrac{2}{3}R_0^{-1}c_{R_0^{-2}}>0$.

We now define some constants (the reasons for the conditions imposed should become clear in the course of the proof).
Take $m_0>0$ sufficiently small and $R>1$ sufficiently large that
\begin{equation} \label{eq:m0cond}
1-m_0-\tfrac{1}{2}c^2-\tfrac{1}{8}\pi^2 (R-1)^{-2}>0.
\end{equation}
By Lemma~\ref{lem:abscty}, there exists $\epsilon_0\in (0,1/2)$ such that if $s\geq 1$ and $|y-y'|\leq \epsilon_0$ then $|u(s,y)-u(s,y')|\leq \tfrac{1}{4}m_0$. 
Take $R'>0$ sufficiently large that $\int_{|r|\geq R'}\phi(r)dr<m_0/(2M)$.
Take $m^*>0$ sufficiently small that
\begin{equation} \label{eq:m*def1}
 m^*<\tfrac{1}{4}e^{-3M}m_0 \tfrac{2\epsilon_0}{\sqrt{6\pi}}\exp(-\tfrac{1}{2}(3c+R+R'+1)^2)
\end{equation}
and also
\begin{equation} \label{eq:m*def2}
m^*<e^{-2M}\tfrac{1}{8}m_0 \psub{R+R'+c+1}{|B(1)|\leq \epsilon_0}
\delta_{R-1}\psub{0}{|B(1)-c|\leq 1/2}.
\end{equation}
Again by Lemma~\ref{lem:abscty}, there exists $\epsilon\in (0,1/2)$ such that if $s\geq 1$ and $|y-y'|\leq \epsilon$ then $|u(s,y)-u(s,y')|\leq \frac{1}{2}m^*$. 
Finally take $t^*>3$ sufficiently large that 
\begin{equation} \label{eq:starcond}
\tfrac{1}{4}\epsilon \delta_{R-1} e^{(1-m_0-\frac{1}{2}c^2 -\frac{1}{8}\pi^2 (R-1)^{-2})(t^*-1)}
>e^M (\psub{0}{|B(1)-c|\leq \epsilon/2})^{-1}.
\end{equation}

Take $t\geq 1$, $T\geq t^*$ and $x,x'\in \R$ with $|x-x'|\leq cT$, and suppose $u(t,x)\geq m^*$.
Suppose $x'\geq x$ (the proof for $x'\leq x$ is the same), and let $a=(x'-x)/T \leq c$.
The proof is divided into three cases.
\begin{enumerate}
\item For each $(s,y)$ with $s\in [0,T-1]$, $|y-(x+as)|\leq R$, we have $\phi \ast u (t+s,y)<m_0$.
\item There exist $(s_0,y_0)$ with $s_0\in [T-3,T-1]$ and $|y_0-(x+as_0)|\leq R$ such that $\phi \ast u (t+s_0,y_0)\geq m_0$.
\item There exist $(s_0,y_0)$ with $s_0\in [0,T-3]$ and $|y_0-(x+as_0)|\leq R$ such that $\phi \ast u (t+s_0,y_0)\geq m_0$,
and such that for any $(s,y)$ with $s\in [s_0+1,T-1]$, 
$|y-(x+as)|\leq R$, we have $\phi \ast u (t+s,y)<m_0$.
\end{enumerate}
We shall treat each case separately; in each we aim to show that $u(t+T,x')\geq m^*$.

We begin with case (1).
By our choice of $\epsilon$, we have that
$u(t,y)\geq \frac{1}{2}m^*$ $\forall y\in [x-\epsilon,x+\epsilon]$.
Also for $s\in [0,T-1]$, if
$|B(s)-(x+a(T-1-s))|\leq R$ then $\phi \ast u(t+T-1-s,B(s))<m_0$
by our assumption in case (1).
Hence for $y\in [x-\epsilon/2,x+\epsilon/2]$, by the Feynman-Kac formula~\eqref{feynmankac} and since $u\geq 0$,
\begin{align*}
&u(t+T-1,y+a(T-1))\\
&\geq \tfrac{1}{2}m^* e^{(1-m_0)(T-1)}\psub{y+a(T-1)}{|B(T-1)-x|\leq \epsilon, \,|B(s)-(x+a(T-1-s))|\leq R\, \forall s\leq T-1}\\
&\geq \tfrac{1}{2}m^* e^{(1-m_0)(T-1)}\psub{0}{|B(T-1)+a(T-1)|\leq \epsilon/2, \,|B(s)+as|\leq R-1\, \forall s\leq T-1}
\end{align*} 
since $|y-x|\leq \epsilon/2$.
Therefore by the estimate in \eqref{eq:tilttube},
since $T\geq t^*>3$, $R>1$ and $|a|\leq \sqrt 2$, for $y\in [x-\epsilon/2,x+\epsilon/2]$,
\begin{align} \label{eq:five_star_2}
u(t+T-1,y+a(T-1))
&\geq \tfrac{1}{2}m^* e^{(1-m_0)(T-1)}\delta_{R-1}\tfrac{1}{2}\epsilon e^{(-\frac{1}{2}a^2 -\frac{1}{8}\pi^2 (R-1)^{-2})(T-1)}\notag \\
&\geq \tfrac{1}{4}\epsilon \delta_{R-1} m^* e^{(1-m_0-\frac{1}{2}c^2 -\frac{1}{8}\pi^2 (R-1)^{-2})(t^*-1)} \notag\\
&\geq m^* e^M (\psub{0}{|B(1)-c|\leq \epsilon/2})^{-1},
\end{align} 
where the second inequality follows since $T\geq t^*>3$, $0\leq a\leq c$ and by the choice of constants in~\eqref{eq:m0cond},
and the third inequality follows
by our choice of constants in~\eqref{eq:starcond}.
By Proposition~\ref{prop:globalbound}, we have $\phi\ast u \leq M$;
it follows by the Feynman-Kac formula~\eqref{feynmankac} and since $x'=x+aT$ that
\begin{align*}
u(t+T,x')
&\geq \inf_{|y-(x+a(T-1))|\leq \epsilon/2} u(t+T-1,y) e^{-M}
\psub{x+aT}{|B(1)-(x+a(T-1))|\leq \epsilon/2}\\
&\geq m^* e^M (\psub{0}{|B(1)-c|\leq \epsilon/2})^{-1}e^{-M}
\psub{0}{|B(1)+a|\leq \epsilon/2}\\
&\geq m^*,
\end{align*}
where the second inequality follows by~\eqref{eq:five_star_2} and the third inequality
since $0\leq a \leq c$.

We now move on to case (2). 
Recall that $M\int_{|r|\geq R'}\phi(r)dr<\tfrac{1}{2}m_0$
by our choice of $R'$,
so by the same argument as in \eqref{eq:phi_far}, there exists $y_1\in [y_0-R',y_0+R']$ such that $u(t+s_0,y_1)\geq \frac{1}{2}m_0$.
Then by our choice of $\epsilon_0$, since $u(t+s_0,y_1)\geq \frac{1}{2}m_0$, we have  $u(t+s_0,y)\geq \frac{1}{4}m_0$ $\forall y \in [y_1-\epsilon_0,y_1+\epsilon_0]$.
Hence by the Feynman-Kac formula \eqref{feynmankac} and Proposition~\ref{prop:globalbound}, and since $T-s_0\leq 3$ and $x'=x+aT$,
\begin{align*}
u(t+T,x')&\geq
e^{-3M}\tfrac{1}{4}m_0 \psub{x+aT}{|B(T-s_0)-y_1|\leq \epsilon_0}\\
&\geq
e^{-3M}\tfrac{1}{4}m_0 \psub{x+aT}{|B(T-s_0)-(x+as_0-R-R')|\leq \epsilon_0}\\
&\geq
e^{-3M}\tfrac{1}{4}m_0 \psub{0}{|B(T-s_0)+(3a+R+R')|\leq \epsilon_0}\\
&\geq
e^{-3M}\tfrac{1}{4}m_0 \tfrac{2\epsilon_0}{\sqrt{6\pi}}\exp(-\tfrac{1}{2}(3a+R+R'+1)^2),
\end{align*}
where the second inequality follows since $|y_1-(x+as_0)|\leq R+R'$ and the third and final inequalities follow from $T-s_0\in [1,3]$ and \eqref{eq:Btbound}.
By our choice of $m^*$ in \eqref{eq:m*def1}, and since $0\leq a \leq c$, we have that $u(t+T,x')\geq m^*$.

Finally, we consider case (3).
By the same argument as in case (2), there exists $y_1\in [y_0-R',y_0+R']$ such that $u(t+s_0,y)\geq \frac{1}{4}m_0$ $\forall y \in [y_1-\epsilon_0,y_1+\epsilon_0]$.
Hence for $y\in [x-1,x+1]$, by the Feynman-Kac formula~\eqref{feynmankac} and Proposition~\ref{prop:globalbound},
\begin{align} \label{eq:dagger_est}
u(t+s_0+1,y+a(s_0+1))
&\geq e^{-M}\tfrac{1}{4}m_0 \psub{y+a(s_0+1)}{|B(1)-y_1|\leq \epsilon_0} \notag\\
&\geq e^{-M}\tfrac{1}{4}m_0 \psub{R+R'+c+1}{|B(1)|\leq \epsilon_0}
\end{align}
since $|y_1-y_0|\leq R'$, $|y_0-(x+as_0)|\leq R$, $|x-y|\leq 1$ and $0\leq a \leq c$ 
so $|y_1-(y+a(s_0+1))|\leq R+R'+c+1$.
By the choice of $s_0$ and the assumption of case (3),
for $s\in [0,T-s_0-2]$, if $|B(s)-(x+a(T-1-s))|\leq R$, we have $\phi \ast u (t+T-1-s,B(s))<m_0$.
Therefore by~\eqref{feynmankac} again, for $y\in [x-1/2,x+1/2]$,
\begin{align*}
&u(t+T-1, y+a(T-1))\\
&\geq \inf_{y'\in [x-1,x+1]}u(t+s_0+1,y'+a(s_0+1))
e^{(1-m_0)(T-s_0-2)}\\
&\qquad\mathbb P_{y+a(T-1)}(|B(s)-(y+a(T-1-s))|\leq R-1\, \forall s \leq T-s_0-2,\\
&\hspace{5cm}|B(T-s_0-2)-(y+a(s_0+1))|\leq 1/2).
\end{align*}
Therefore by \eqref{eq:tilttube} and \eqref{eq:dagger_est}, and since $T-s_0-2\geq 1$,
\begin{align} \label{eq:(A)2}
&u(t+T-1, y+a(T-1))\notag\\
&\geq e^{-M}\tfrac{1}{4}m_0 \psub{R+R'+c+1}{|B(1)|\leq \epsilon_0}
e^{(1-m_0)(T-s_0-2)}\delta_{R-1}\tfrac{1}{2}e^{-(\frac{1}{2}a^2+\frac{1}{8}\pi^2 (R-1)^{-2})(T-s_0-2)}\notag\\
&\geq e^{-M}\tfrac{1}{8}m_0 \psub{R+R'+c+1}{|B(1)|\leq \epsilon_0}
\delta_{R-1},
\end{align}
since $1-m_0-\frac{1}{2}c^2-\frac{1}{8}\pi^2 (R-1)^{-2}>0$ (by our choice of constants in~\eqref{eq:m0cond}) and $0\leq a \leq c$.
Finally, by \eqref{feynmankac} and Proposition~\ref{prop:globalbound},
and since $x'=x+aT$,
\begin{align*}
u(t+T,x')
&\geq e^{-M}\inf_{|y-x|\leq 1/2}u(t+T-1,y+a(T-1))\\
&\hspace{3cm}\psub{x+aT}{|B(1)-(x+a(T-1))|\leq 1/2}\\
&\geq e^{-2M}\tfrac{1}{8}m_0 \psub{R+R'+c+1}{|B(1)|\leq \epsilon_0}
\delta_{R-1}\psub{0}{|B(1)-c|\leq 1/2},
\end{align*}
by~\eqref{eq:(A)2} and since $0\leq a \leq c$.
By our choice of $m^*$ in \eqref{eq:m*def2}, we have that $u(t+T,x')\geq m^*$.
This completes the proof.
\end{proof}
Recall that $\|u_0\|_\infty \leq L$.
From now on, we shall assume that $u_0 (x)=0$ $\forall x\geq L$
and $u_0 \not \equiv 0$.
We can now prove Theorem~\ref{thm:speed} with $m^*=m^*(1)$.

\begin{proof}[Proof of Theorem~\ref{thm:speed}]
We begin by proving an upper bound on $u$.
For $y\geq 0$ and $t\geq L/(\sqrt 2-1)$, by the Feynman-Kac formula \eqref{feynmankac0},
\begin{align} \label{uabovesqrt2}
u(t,\sqrt 2 t +y)
&\leq e^t \Esub{\sqrt 2 t  +y}{u_0(B(t))} \notag\\
&\leq L e^t \psub{\sqrt 2 t +y}{B(t)\leq L}\notag\\
&\leq Le^t \frac{1}{\sqrt{2\pi t}}\exp\left(-\frac{1}{2t}(\sqrt 2 t +y -L)^2\right)\notag\\
&\leq \frac{L}{\sqrt{2\pi t}}e^{\sqrt 2 L},
\end{align}
where the 
second line follows since $\|u_0\|_\infty \leq L$ and $u_0(x)=0$ for $x\geq L$ and the
third line follows by~\eqref{eq:gaussiantail2}
and since $(\sqrt 2 t -L)t^{-1/2}\geq t^{1/2}$.
It follows that $\lim_{t\to\infty}\sup_{x\geq \sqrt 2 t}u(t,x)=0$.

It remains to prove a lower bound.
Fix $\epsilon >0$.
Note first that by \eqref{feynmankac0} and Proposition~\ref{prop:globalbound},
\begin{equation} \label{eq:u>0}
u(1,0)\geq e^{1-M}\Esub{0}{u_0(B(1))}>0
\end{equation}
since $u_0\geq 0$ and $u_0\not \equiv 0$.
Let $C=C(1/2)$, $R=R(1/2)$ and $z_0=z_0(1/2)$ as defined in Lemma~\ref{lem:growu}.
Then by Lemma~\ref{lem:growu}, there exist $s_0\in[0,C\log (1/\min(z_0,u(1,0)))]$ and $y_0\in [-R,R]$ such that $u(1+s_0,y_0)\geq 1/2$.

By Lemma~\ref{lem:travelu}, for $t\geq t^*(\sqrt 2 -\epsilon)$, 
if $|x-y_0|\leq (\sqrt 2-\epsilon)t$ then 
$u(1+s_0+t,x)\geq m^*(\sqrt 2-\epsilon)$.
By Lemma~\ref{lem:growu} again, if $|x- y_0|\leq (\sqrt 2-\epsilon)t$ then there exists $s_1(x)\in [0,C\log (1/\min(z_0,m^*(\sqrt 2-\epsilon)))]$ and $y_1(x)\in [-R,R]$
such that $u(1+s_0+t+s_1(x),x+y_1(x))\geq 1/2$.
Then by Lemma~\ref{lem:travelu}, it follows that for any $s_2\geq \max(t^*(1),2R)$ and $y\in [x-R,x+R]$,
$u(1+s_0+t+s_1(x)+s_2,y)\geq m^*(1)$.

Let $A=1+C\log (1/\min(z_0,u(1,0)))+C\log (1/\min(z_0,m^*(\sqrt 2-\epsilon)))+\max(t^*(1),2R)$.
We now have that for $t\geq t^*(\sqrt 2 -\epsilon)$, for any $s\geq t+A$,
for any $x\in [-(\sqrt 2-\epsilon)t,(\sqrt 2-\epsilon)t]$, $u(s,x)\geq m^*(1)$.
Therefore, for $s\geq t^*(\sqrt 2 -\epsilon)+A$,
$\inf_{x\in [0, (\sqrt 2 -\epsilon)s-\sqrt 2 A]}u(s,x)\geq m^*(1)$.
Since $\epsilon>0$ was arbitrary, the result follows.
\end{proof}
The final lemma of this section will help us determine regions in which $\phi\ast u(t,y) \lesssim t^{-1}$, and therefore Brownian paths for which $\int_0^{t-1} \phi\ast u(t-s,B(s))ds =\mathcal O(\log t)$.
\begin{lem} \label{lem:t-1u}
For $y\geq 0$ and $t\geq \max(L/(\sqrt 2 -1),1)$,
$$
u(t,\sqrt 2 t +\tfrac{1}{2\sqrt 2}\log t +y)\leq L e^{2 L} t^{-1}.
$$
\end{lem}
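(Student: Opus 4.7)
The plan is to mimic the upper bound calculation for $u(t,\sqrt 2 t+y)$ in the proof of Theorem~\ref{thm:speed} (cf.~\eqref{uabovesqrt2}), with the key observation being that the extra shift of $\frac{1}{2\sqrt 2}\log t$ produces an additional factor of $e^{-\frac{1}{2}\log t}=t^{-1/2}$ in the Gaussian tail, which combines with the $t^{-1/2}$ already present to yield the desired $t^{-1}$ decay.

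In detail, let $x=\sqrt 2 t +\tfrac{1}{2\sqrt 2}\log t+y$. Since $\phi\ast u\geq 0$, applying the Feynman--Kac formula~\eqref{feynmankac0} and using $\|u_0\|_\infty\leq L$ together with $u_0\equiv 0$ on $[L,\infty)$, I obtain
\begin{equation*}
u(t,x)\leq e^t\,\esub{x}{u_0(B(t))}\leq L e^t \psub{x}{B(t)\leq L}=L e^t\psub{0}{B(t)\geq x-L}.
\end{equation*}
The hypothesis $t\geq L/(\sqrt 2-1)$ ensures that $\sqrt 2 t-L\geq t$, and since $y\geq 0$ and $\log t\geq 0$ (as $t\geq 1$), this gives $x-L\geq t\geq 1$. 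In particular $(x-L)/\sqrt t>0$, so I can apply the sharper Gaussian tail bound~\eqref{eq:gaussiantail2} to the standard normal $B(t)/\sqrt t$.

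Applying~\eqref{eq:gaussiantail2} produces a factor $\frac{\sqrt t}{(x-L)\sqrt{2\pi}}\exp(-(x-L)^2/(2t))$. The only non-routine step is the expansion
\begin{equation*}
\frac{(x-L)^2}{2t}=t+\tfrac{1}{2}\log t+\sqrt 2 (y-L)+\frac{(\tfrac{1}{2\sqrt 2}\log t+y-L)^2}{2t},
\end{equation*}
where the last term is nonnegative and may be discarded. Substituting back, the $e^t$ cancels the $-t$, the $+\tfrac{1}{2}\log t$ in the exponent combines with the $\sqrt t$ prefactor to leave no power of $t$ in the numerator, and the $\sqrt 2 y$ term is nonpositive in the exponent and can be dropped. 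What remains is
\begin{equation*}
u(t,x)\leq \frac{L}{(x-L)\sqrt{2\pi}}e^{\sqrt 2 L}\leq \frac{L e^{\sqrt 2 L}}{t\sqrt{2\pi}}\leq \frac{L e^{2L}}{t},
\end{equation*}
using $x-L\geq t$ and the crude bound $e^{\sqrt 2 L}/\sqrt{2\pi}\leq e^{2L}$.

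There is no serious obstacle here; the lemma is essentially a bookkeeping calculation refining~\eqref{uabovesqrt2}. The only point to be careful about is keeping track of the $t^{-1/2}$ arising from the $\tfrac{1}{2\sqrt 2}\log t$ shift in $x$ and checking that the hypothesis $t\geq L/(\sqrt 2-1)$ is exactly what is needed to guarantee $x-L\geq t$, so that the inverse-linear factor from~\eqref{eq:gaussiantail2} contributes the remaining $t^{-1}$ rather than some smaller quantity.
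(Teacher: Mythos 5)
Your proof is correct and follows essentially the same route as the paper: the Feynman--Kac formula~\eqref{feynmankac0} with the crude bound $\exp\bigl(\int_0^t(1-\phi\ast u)\,ds\bigr)\leq e^t$, the support and boundedness of $u_0$, and the Gaussian tail estimate~\eqref{eq:gaussiantail2}, with the hypothesis $t\geq L/(\sqrt 2-1)$ used exactly as in the paper to guarantee the relevant distance is at least $t$. The only difference is cosmetic bookkeeping: your exact expansion of $(x-L)^2/(2t)$ discards the nonnegative square in one step, whereas the paper carries a harmless correction term $e^{L\log t/(2\sqrt 2\,t)}$ and bounds it using $t^{-1}\log t\leq e^{-1}$.
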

\begin{proof}
By the Feynman-Kac formula \eqref{feynmankac0}, for $y\geq 0$ and $t\geq \max(L/(\sqrt 2 -1),1)$,
\begin{align*}
u(t,\sqrt 2 t +\tfrac{1}{2\sqrt 2}\log t +y)
&\leq e^t \Esub{\sqrt 2 t +\frac{1}{2\sqrt 2}\log t +y}{u_0(B(t))} \notag\\
&\leq L e^t \psub{\sqrt 2 t +\frac{1}{2\sqrt 2}\log t +y}{B(t)\leq L} \notag\\
&\leq Le^t \frac{1}{\sqrt{2\pi t}}\exp\left(-\frac{1}{2t}(\sqrt 2 t +\tfrac{1}{2\sqrt 2}\log t -L)^2\right) \notag\\
&\leq \frac{L}{\sqrt{2\pi}}e^{\sqrt 2 L+\frac{L\log t}{2\sqrt 2t}}t^{-1}\notag \\
&\leq L e^{2 L} t^{-1},
\end{align*}
where the second line follows since $\|u_0\|_\infty \leq L$ and $u_0(x)=0$ for $x\geq L$, the third line follows by \eqref{eq:gaussiantail2} and since $\sqrt{2}t\geq L$, $y\geq 0$ and $(\sqrt 2 t +\frac{1}{2\sqrt 2}\log t-L)t^{-1/2}\geq t^{1/2}$, and the last line follows
since $t^{-1}\log t\leq e^{-1}$.
\end{proof}

\section{Proof of Theorem~\ref{thm:lighttail}} \label{sec:light}

In this section we shall suppose that there exists $\alpha>2$ such that for $r_0>0$ sufficiently large,  
\begin{equation} \label{eq:lighttail}
\forall r\geq r_0, \,\int_r^\infty \phi(x)dx \leq r^{-\alpha}.
\end{equation}
As in Section~\ref{sec:prel}, we assume that $\phi$ satisfies assumption~\eqref{eq:phi_cond} with our choice of $\eta$ and $\sigma$.
Also we suppose that $u_0\in L^\infty (\R)$ with $u_0\geq 0$, $u_0 \not \equiv 0$, $\|u_0\|_\infty \leq L$ and $u_0 (x)=0$ $\forall x\geq L$, and let $u$ denote the solution of~\eqref{nonlocal_fkpp}.

We shall use the following pair of lemmas from~\cite{bramson1983}.
The first is an application of the reflection principle.
Recall from Section~\ref{sec:notation} that under $\mathbb P$, $(\xi^t(s),0\leq s \leq t)$ is a Brownian bridge from $0$ to $0$ of length $t$.
\begin{lem} [Lemma 2.2 in~\cite{bramson1983}] \label{lem:bram1}
For $y_1,y_2>0$,
$$
\p{\xi^t(s)\geq -\tfrac{s}{t}y_1-\tfrac{t-s}{t}y_2 \, \,\forall s\in [0,t]}=1-e^{-2y_1 y_2 /t}.
$$
\end{lem}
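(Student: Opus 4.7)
The plan is to reduce the Brownian bridge event with a linear (two-endpoint) barrier to the classical problem of a Brownian bridge with constant barrier $0$, where the reflection principle gives an explicit formula. Introduce the shifted process
\[
W(s):=\xi^t(s)+\tfrac{s}{t}y_1+\tfrac{t-s}{t}y_2,\qquad s\in[0,t].
\]
Since $\xi^t$ is a Brownian bridge from $0$ to $0$ and we are adding a deterministic linear interpolation, $W$ has the law of a Brownian bridge from $y_2$ (at time $0$) to $y_1$ (at time $t$). The event in the lemma is exactly $\{W(s)\geq 0\ \forall s\in[0,t]\}$.

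Next I would realise $W$ as Brownian motion started at $y_2$ conditioned on $B(t)=y_1$, so that
\[
\p{\xi^t(s)\geq -\tfrac{s}{t}y_1-\tfrac{t-s}{t}y_2\ \forall s}=\frac{\psub{y_2}{B(s)\geq 0\ \forall s\leq t,\ B(t)\in dy_1}}{\psub{y_2}{B(t)\in dy_1}}.
\]
The denominator is just the Gaussian density $\frac{1}{\sqrt{2\pi t}}e^{-(y_1-y_2)^2/(2t)}\,dy_1$. For the numerator I would apply the reflection principle: the contribution from paths that touch $0$ before reaching the interval $dy_1$ equals, by reflecting at the first hitting time of $0$, the density of a Brownian motion started at $-y_2$ ending in $dy_1$, namely $\frac{1}{\sqrt{2\pi t}}e^{-(y_1+y_2)^2/(2t)}\,dy_1$. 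Subtracting this from the unconditional density gives the numerator.

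Taking the ratio, the Gaussian prefactors cancel and the exponents combine as
\[
\frac{e^{-(y_1-y_2)^2/(2t)}-e^{-(y_1+y_2)^2/(2t)}}{e^{-(y_1-y_2)^2/(2t)}}=1-e^{-2y_1y_2/t},
\]
which is the claimed identity. There is no real obstacle here; the only thing to be careful about is a clean justification of the Brownian-bridge-as-conditioned-Brownian-motion step (which is standard and can be invoked directly) and the application of the strong Markov property at the first hitting time of $0$ used in the reflection argument. Since the statement is quoted from \cite{bramson1983}, one could equivalently just cite Lemma~2.2 there and skip the computation.
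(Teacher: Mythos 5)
Your argument is correct and is essentially the standard reflection-principle computation that the paper itself points to: the paper gives no proof, quoting the statement as Lemma~2.2 of \cite{bramson1983} and noting it is "an application of the reflection principle", which is exactly your route (shift the bridge to a bridge from $y_2$ to $y_1$, view it as conditioned Brownian motion, reflect at the first hitting time of $0$, and take the ratio of Gaussian densities). No gaps; the computation $1-e^{-[(y_1+y_2)^2-(y_1-y_2)^2]/(2t)}=1-e^{-2y_1y_2/t}$ checks out.
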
 
The second lemma is an example of a phenomenon known as entropic repulsion, and is proved using a Girsanov transform.
\begin{lem}[Simplified version of Lemma 6.1 in \cite{bramson1983}]
\label{lem:bram2}
For $z\in \R$, $\delta \in (0,1/2)$ and $A>0$ fixed, 
for $\epsilon>0$, there exists $r_\epsilon<\infty$ such that $\forall r>r_\epsilon$ and $t>3r$,
$$
\left|\frac{\p{\xi^t(s)>z+\min(As^\delta, A(t-s)^\delta)\, \forall s\in [r,t-r]}}{\p{\xi^t(s)>z-\min(As^\delta, A(t-s)^\delta)\, \forall s\in [r,t-r]}}-1 \right| <\epsilon.
$$
\end{lem}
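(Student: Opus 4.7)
The ratio $\p{E_+}/\p{E_-}$ is automatically at most $1$, where I write $f(s) := \min(As^\delta, A(t-s)^\delta)$ and $E_\pm := \{\xi^t(s) > z \pm f(s),\, s \in [r, t-r]\}$, so the task reduces to lower-bounding it by $1 - \epsilon$ for $r$ sufficiently large (uniformly in $t > 3r$). My plan is to apply the Cameron-Martin (Girsanov) shift of the bridge by a function of shape $2f$, which is the usual implementation of the entropic repulsion argument of Bramson.

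I would construct a shift $g : [0, t] \to \R$ that is continuously differentiable, satisfies $g(0) = g(t) = 0$, coincides with $2 f$ on $[r, t-r]$, and interpolates linearly between $0$ and $2A r^\delta$ on each of $[0, r]$ and $[t-r, t]$. The hypothesis $\delta < 1/2$ is used crucially here: a direct computation gives $I(r) := \int_0^t g'(s)^2 \, ds \leq C_{A,\delta}\, r^{2\delta - 1}$, which tends to $0$ as $r \to \infty$. By the Cameron-Martin theorem for the Brownian bridge, the law of $\xi^t - g$ is mutually absolutely continuous with the law of $\xi^t$, with Radon-Nikodym derivative
$$
\Phi \;:=\; \exp\!\left( -\int_0^t g'(s) \, d\xi^t(s) \;-\; \tfrac{1}{2} I(r)\right).
$$
Since $g \equiv 2f$ on $[r, t-r]$, the event $\{\xi^t - g \in E_-\}$ coincides with $E_+$, so Cameron-Martin yields $\p{E_+} = \E{\Phi \cdot \I{E_-}}$ and consequently $\p{E_+}/\p{E_-} = \E{\Phi \mid E_-}$. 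The deterministic term in the exponent of $\log \Phi$ is $O(r^{2\delta-1}) \to 0$, and the Gaussian stochastic integral $\int_0^t g'(s) \, d\xi^t(s)$ has mean zero with variance bounded by a constant multiple of $I(r)$, so it too tends to $0$ in probability. Hence $\Phi \to 1$ in probability as $r \to \infty$.

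The main technical obstacle is to upgrade this unconditional in-probability convergence of $\Phi$ to convergence of the conditional expectation $\E{\Phi \mid E_-}$ \emph{uniformly} in $t > 3r$. The subtlety is that $\p{E_-}$ itself decays: for $t \gg r$ it is of order $1/t$, as one sees from the reflection principle behind Lemma~\ref{lem:bram1}, so a naive bound $|\p{E_+} - \p{E_-}| \leq \E{|\Phi - 1|}$ is too crude. The strategy I would follow is to first condition on the endpoints $\xi^t(r)$ and $\xi^t(t-r)$: under the conditioning on $E_-$, these endpoints typically lie a distance of order $\sqrt{r}$ above $z$, a scale that already dominates $f(r) = Ar^\delta$ because $\delta < 1/2$. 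Given these endpoints, the interior of $\xi^t$ on $[r, t-r]$ is itself a Brownian bridge whose typical fluctuations are at least $\sqrt{r}$, against which the perturbation $f$ is lower order; one then runs Girsanov on this interior bridge, against which the typical boundary correction introduced by $g$ is absorbed by the fluctuations of the endpoint distribution. Matching the decay rates of $\E{(\Phi - 1)^2}$ and $\p{E_-}$ carefully then yields $\E{\Phi \mid E_-} \to 1$. This matching is the delicate core of Bramson's original argument, and is the step I would need to work out in detail.
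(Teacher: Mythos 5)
The paper itself does not prove this lemma: it is imported verbatim (in simplified form) from Lemma 6.1 of Bramson's memoir \cite{bramson1983}, with only the remark that it is an instance of entropic repulsion proved via a Girsanov transform. Your opening moves are exactly in that spirit and are correct as far as they go: $E_+\subseteq E_-$ gives the upper bound $1$ for the ratio, the Cameron--Martin shift by a function equal to $2\min(As^\delta,A(t-s)^\delta)$ on $[r,t-r]$ and interpolated to $0$ at the endpoints is the natural change of measure, the bound $\int_0^t g'(s)^2\,ds\le C_{A,\delta}\,r^{2\delta-1}\to0$ is where $\delta<1/2$ enters, and the identity $\mathbb{P}(E_+)/\mathbb{P}(E_-)=\mathbb{E}[\Phi\mid E_-]$ is the right reformulation.

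However, there is a genuine gap, and you have located it yourself: the whole content of the lemma is the claim that $\mathbb{E}[\Phi\mid E_-]\to1$ \emph{uniformly in} $t>3r$, and this is asserted as a plan rather than proved. Unconditional convergence $\Phi\to1$ in probability (or in $L^2$) is not enough, because $\mathbb{P}(E_-)$ decays like a constant times $1/t$ (as Lemma~\ref{lem:bram1} indicates), so the naive estimate $|\mathbb{E}[\Phi-1\mid E_-]|\le\mathbb{E}|\Phi-1|/\mathbb{P}(E_-)\lesssim r^{\delta-1/2}\,t$ diverges with $t$; any correct argument must beat a factor of $t$, and that is precisely the nontrivial part of Bramson's Lemma 6.1. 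Your proposed remedy --- condition on $\xi^t(r)$ and $\xi^t(t-r)$, argue that on $E_-$ these endpoints are typically of order $\sqrt{r}$ above $z$ so the shift $Ar^\delta$ is lower order, then apply Girsanov to the interior bridge --- has the right shape, but it is described only qualitatively. To close it you would need a quantitative estimate that is uniform over the endpoint values, including the atypically low endpoints (which carry non-negligible conditional mass given $E_-$ and for which the shift is \emph{not} negligible), together with a comparison of the endpoint laws under the two events that survives integration; the ``matching of decay rates'' you defer is exactly this step. As written, the proposal is a roadmap to Bramson's proof rather than a proof; to be self-contained you must carry out the endpoint-conditioned change-of-measure estimate in detail, or else simply cite Lemma 6.1 of \cite{bramson1983} as the paper does.
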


The first step in the proof of Theorem~\ref{thm:lighttail} is the following result.
Take $m^*(1)>0$ as defined in Lemma~\ref{lem:travelu}.

\begin{prop} \label{prop:logdelay}
There exist $K<\infty$ and $T<\infty$ such that for $t\geq T$, 
$u(t,x)\geq m^*(1)$ $\forall x\in [0,\sqrt 2 t-K\log t]$.
\end{prop}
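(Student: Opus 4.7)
The plan is to combine a Feynman-Kac lower bound for $u$ near the front with the propagation tools of Lemmas~\ref{lem:growu} and~\ref{lem:travelu}, in the spirit of Bramson's proof of the $\tfrac{3}{2\sqrt 2}\log t$ correction for the local KPP equation but avoiding the maximum principle. The key intermediate goal is to establish, for some $K_0,z_0>0$ and $\epsilon\in(0,\sqrt 2)$ fixed small, the pointwise lower bound
\[
u(t,y)\geq z_0 \qquad \text{for all } y\in\bigl[(\sqrt 2-\epsilon)t,\,\sqrt 2 t-K_0\log t\bigr] \text{ and all $t$ large.}
\]
Theorem~\ref{thm:speed} already handles the complementary bulk $[0,(\sqrt 2-\epsilon)t]$. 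Once the uniform pointwise bound is in hand, I apply it at time $t-A$ for a large fixed $A$, then Lemma~\ref{lem:growu} with $\delta=\tfrac12$ to reach $u\geq\tfrac12$ at some space-time point within $O(1)$ of $(t-A,y)$, and finally Lemma~\ref{lem:travelu} with $c=1$ and $T\leq A$ to spread $u\geq m^*(1)$ to a ball of radius $T$ at time $t$. Taking the union of these balls as $y$ varies over the front region and gluing with Theorem~\ref{thm:speed} (used with any $\epsilon'<\epsilon$) yields $u(t,\cdot)\geq m^*(1)$ on the whole interval $[0,\sqrt 2 t-K\log t]$ for any $K>K_0$ and all large $t$.

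The main step is the Feynman-Kac argument for the pointwise estimate. Fix $y$ in the front region and apply~\eqref{feynmankac} with $t'=t-T_0$, where $T_0$ is a large fixed time for which $u(T_0,\cdot)\geq m^*(1)$ on some interval $I_0$ (say $I_0=[0,1]$, using Theorem~\ref{thm:speed}). Restrict the expectation to
\[
\mathcal E := \bigl\{B(s)\geq\Gamma(t-s)\ \forall\,s\in[1,t-T_0-1]\bigr\}\cap\bigl\{B(t-T_0)\in I_0\bigr\},
\]
with $\Gamma(r):=\sqrt 2\,r+\tfrac{1}{2\sqrt 2}\log r+r^{1/\alpha}$. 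On $\mathcal E$ I bound $\phi\ast u(t-s,B(s))=\int\phi(z)u(t-s,B(s)-z)\,dz$ by splitting at the cutoff $z=R_s:=(t-s)^{1/\alpha}$: for $z\leq R_s$ the shifted point $B(s)-z$ lies above $\sqrt 2(t-s)+\tfrac{1}{2\sqrt 2}\log(t-s)$ so Lemma~\ref{lem:t-1u} yields $u\leq Le^{2L}/(t-s)$, while for $z>R_s$ Proposition~\ref{prop:globalbound} gives $u\leq M$ and the tail bound~\eqref{eq:lighttail} gives $\int_{R_s}^\infty\phi\leq R_s^{-\alpha}=(t-s)^{-1}$. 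Hence $\int_0^{t-T_0}\phi\ast u\,ds\leq C_1\log t+O(1)$ on $\mathcal E$, and the integrand in~\eqref{feynmankac} is at least a constant multiple of $m^*(1)\,t^{-C_1}\,e^{t-T_0}\,\mathbf 1_{\mathcal E}$.

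Next I apply Girsanov's theorem to tilt $B$ by the constant drift $-v$ with $v:=y/(t-T_0)$, so that $W(s):=B(s)-y+vs$ is a standard Brownian motion from $0$. Writing $y=\sqrt 2 t-g$ with $g\in[K_0\log t,\epsilon t]$, the Radon-Nikodym factor $e^{v(B(t-T_0)-y)+v^2(t-T_0)/2}$ on the event $B(t-T_0)\in I_0$ is bounded below by a constant times $e^{-v^2(t-T_0)/2+\sqrt 2 g+O(g^2/t)}$; the $e^{-v^2(t-T_0)/2}$ factor exactly cancels the $e^{t-T_0}$ coming from the $1$ in~\eqref{feynmankac}, leaving an extra factor at least $t^{\sqrt 2 K_0}$ when $g=K_0\log t$ (and much more for $g\gg\log t$). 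The event $\mathcal E$ becomes a Brownian event $\mathcal E'$ for $W$ of staying above the shifted barrier
\[
\widetilde\Gamma(s):=g\Bigl(1-\tfrac{s}{t-T_0}\Bigr)+\tfrac{1}{2\sqrt 2}\log(t-s)+(t-s)^{1/\alpha}+O(1)
\]
on $[1,t-T_0-1]$ and ending in a translate of $I_0$, so $u(t,y)\geq c_0\,t^{\sqrt 2 K_0-C_1}\,\mathbb P(\mathcal E')$.

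The hard part, and the place where the hypothesis $\alpha>2$ is used, is to lower bound $\mathbb P(\mathcal E')$ by $t^{-q}$ with an explicit exponent $q$. After conditioning on the endpoint of $W$ at time $t-T_0$ and rewriting as a Brownian bridge estimate, the linear part of $\widetilde\Gamma$ is handled by the reflection identity of Lemma~\ref{lem:bram1}, and the logarithmic and power corrections $\tfrac{1}{2\sqrt 2}\log(t-s)+(t-s)^{1/\alpha}$ are absorbed by the entropic repulsion estimate of Lemma~\ref{lem:bram2}. The latter requires a perturbation of the form $A\min(s^\delta,(t-s)^\delta)$ with $\delta<\tfrac12$, and $\alpha>2$ is exactly what ensures $1/\alpha<\tfrac12$ so that the $(t-s)^{1/\alpha}$ correction is sub-diffusive at every scale. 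Choosing $K_0$ large enough that $\sqrt 2 K_0>C_1+q$ then gives $u(t,y)\geq z_0$ uniformly, and the propagation argument of the first paragraph concludes the proof.
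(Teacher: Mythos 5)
There is a genuine gap at the heart of your argument: the event $\mathcal E$ is essentially impossible for the starting points you care about, so the claimed bound $\mathbb P(\mathcal E')\geq t^{-q}$ is false. You start the Feynman--Kac path at $y=\sqrt 2 t-g$ with $g\geq K_0\log t$, but you require $B(s)\geq \Gamma(t-s)$ already for $s\in[1,t-T_0-1]$, and $\Gamma(t-1)\approx \sqrt 2 t+\tfrac{1}{2\sqrt 2}\log t+t^{1/\alpha}$. Thus the path must climb by about $g+t^{1/\alpha}+\tfrac{1}{2\sqrt 2}\log t$ within one unit of time, which after the Girsanov tilt is exactly the statement that $W$ must exceed $\widetilde\Gamma(1)\approx g+t^{1/\alpha}$ at time $1$; this has probability of order $\exp(-\tfrac12(g+t^{1/\alpha})^2)$, which is super-polynomially small in $t$ even for $g=K_0\log t$ (since $\alpha>2$ only gives $1/\alpha<1/2$, not boundedness), and is catastrophic for $g\sim\epsilon t$, where the Girsanov gain $e^{\sqrt 2 g}$ cannot compete with $e^{-g^2/2}$. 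Neither quoted lemma can rescue this: Lemma~\ref{lem:bram1} requires the linear barrier to lie strictly below \emph{both} bridge endpoints ($y_1,y_2>0$), whereas your ramp $g(1-s/(t-T_0))$ lies far above the starting point; and Lemma~\ref{lem:bram2} only compares perturbations of the symmetric form $\pm\min(As^\delta,A(t-s)^\delta)$, which are small near both ends of the bridge, while your correction $(t-s)^{1/\alpha}+\tfrac{1}{2\sqrt2}\log(t-s)$ is of size $t^{1/\alpha}$ at the left end. The flaw is built into the design: because you cut the convolution at $(t-s)^{1/\alpha}$, the barrier is forced to sit $t^{1/\alpha}$ above the critical ray precisely where the path starts. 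Consequently the intermediate claim $u(t,y)\geq z_0$ uniformly on $[(\sqrt 2-\epsilon)t,\sqrt 2 t-K_0\log t]$ is not established, and the whole proof collapses (the final gluing step via Lemmas~\ref{lem:growu} and~\ref{lem:travelu} is fine, but it has nothing to feed on).

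For contrast, the paper avoids exactly this trap: it applies Feynman--Kac not behind the front but at $x=\sqrt 2 t+\tfrac{1}{2\sqrt 2}\log t+1$, with barrier $f(s)=\sqrt 2(t-s)+\tfrac{1}{2\sqrt 2}\log t+\min(s^\delta,(t-\log t-s)^\delta)$ imposed only on $s\in[r,t-\log t-r]$ and with $\delta\in(1/\alpha,1/2)$, so the required elevation above the critical ray is of size $r^\delta$ near the start of the path and the bridge endpoints essentially sit on the ray; the correction is then of the exact $\min$-form that Lemma~\ref{lem:bram2} handles, the tail of $\phi$ is controlled via $\int_r^\infty Ms^{-\alpha\delta}ds<\infty$ (using $\alpha\delta>1$, a cutoff growing in $s$ rather than in $t-s$), and one obtains only a \emph{polynomially} small value $u(t,x)\geq t^{-M-Le^{2L}-3}$ ahead of the front. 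The $K\log t$ lag in the proposition is then not produced by a one-shot constant lower bound behind the front at all: it comes from Lemma~\ref{lem:growu}, which needs time $C\log(1/z)\propto\log t$ to grow this polynomially small value to order one, during which the front advances by a further $\sqrt 2 C(\cdot)\log t$, and Lemma~\ref{lem:travelu} then fills in. If you want to salvage your approach you would have to redesign the event so that the barrier is small relative to the straight line near the start of the path (and accept that the conclusion is only a polynomially small bound at, or ahead of, the front), which is essentially the paper's route.
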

\begin{proof}
Recall from~\eqref{eq:u>0} in the proof of Theorem~\ref{thm:speed} that since $u_0\not \equiv 0$, $u_0\geq 0$ and $u_0\in L^\infty (\R)$ we have $u(1,0)>0$.
Now take $\epsilon\in (0,\frac{1}{2}u(1,0))$ sufficiently small that Lemma~\ref{lem:abscty} holds for this choice of $\epsilon$.
It follows that for $|x|\leq \epsilon^3$, we have $u(1,x)\geq \epsilon$.

Therefore, for $t\geq e$,
for $x\in [\frac{5}{4}\sqrt 2 \log t +1,\frac{5}{4}\sqrt 2 \log t +2]$,
by the Feynman-Kac formula~\eqref{feynmankac} and Proposition~\ref{prop:globalbound},
\begin{align} \label{eq:someulog}
u(\log t, x)
&\geq e^{(\log t-1)(1-M)}\epsilon \psub{x}{|B(\log t-1)|\leq \epsilon^3}\notag \\
&\geq \epsilon e^{M-1}t^{-M+1}\frac{2\epsilon^3}{\sqrt{2\pi \log t}}
\exp(-(\epsilon^3+2+\tfrac{5}{4}\sqrt 2 \log t)^2/(2(\log t-1))) \notag\\
&\geq \frac{2\epsilon^4}{\sqrt{2\pi }} e^{M-1}(\log t)^{-1/2}t^{-M+1}
\exp(-\tfrac{25}{16}\log t -o(\log t)) \notag\\
&\geq t^{-M-1}
\end{align} 
for $t$ sufficiently large, where the second line follows by~\eqref{eq:Btbound}.

Take $\delta\in (1/\alpha,1/2)$ and $t$ large.
Then for $s\in [\max(r_0^{1/\delta},L/(\sqrt 2-1)),(t-\log t)/2]$, if 
$y\geq \sqrt 2 (s+\log t)+\frac{1}{2\sqrt 2}\log t+s^\delta$,
we have
\begin{align} \label{eq:phiulow1}
\phi\ast u(s+\log t,y)&=\int_{-\infty}^\infty \phi (x)u(s+\log t,y-x)dx \notag \\
&\leq M\int_{s^\delta}^\infty \phi (x)dx+\sup_{z\geq \sqrt 2 (s+\log t) +\frac{1}{2\sqrt2}\log t}u(s+\log t,z)\int_{-\infty}^\infty \phi (x) dx \notag \\
&\leq Ms^{-\alpha \delta}+L e^{2 L}(s+\log t)^{-1},
\end{align}
where the second line follows by Proposition~\ref{prop:globalbound} and since $\phi \geq 0$ and the last line holds for $s\geq \max(r_0^{1/\delta},L/(\sqrt 2-1))$ by \eqref{eq:lighttail} and Lemma \ref{lem:t-1u} and since $s+\log t\leq \tfrac{1}{2}(t+\log t)\leq t$ and $\int_{-\infty}^\infty \phi (x) dx=1$.
Similarly, for $t$ sufficiently large, if $s\in [(t-\log t)/2,t-\log t-r_0^{1/\delta}]$ and 
$y\geq \sqrt 2 (s+\log t)+\frac{1}{2\sqrt 2}\log t+(t-\log t-s)^\delta$,
\begin{align} \label{eq:phiulow2}
\phi\ast u(s+\log t,y)
&\leq M(t-\log t-s)^{-\alpha \delta}+L e^{2 L}(s+\log t)^{-1}.
\end{align}
For $s\in[0,t-\log t]$, let 
\begin{equation} \label{eq:fdef}
f(s)=\sqrt 2 (t-s)+\tfrac{1}{2\sqrt 2}\log t+\min(s^\delta,(t-\log t-s)^\delta).
\end{equation}
Note that if $t-s=s'+\log t$ then $f(s)=\sqrt 2 (s'+\log t)+\frac{1}{2\sqrt 2}\log t+\min ((s')^\delta ,(t-\log t-s')^\delta)$.
Therefore for $r\geq \max(1,r_0^{1/\delta},L/(\sqrt 2-1))$,
if $B(s)\geq f(s)$ $\forall s\in [r,t-\log t-r]$,
we have by Proposition~\ref{prop:globalbound}, \eqref{eq:phiulow1} and \eqref{eq:phiulow2} that
\begin{align} \label{eq:phiupath}
\int_0^{t-\log t} \phi\ast u (t-s,B(s))ds
&\leq 2rM+2\int_r^{(t-\log t)/2}Ms^{-\alpha \delta}ds +\int_r^{t-\log t}L e^{2L}(s+\log t)^{-1}ds \notag \\
&<2rM +\tfrac{2M}{\alpha \delta -1}r^{1-\alpha \delta}+L e^{2L}\log t 
\end{align} 
since $\alpha \delta>1$.
Take $r\geq \max(1, r_0^{1/\delta},L/(\sqrt 2-1))$ and let $x=\sqrt 2 t +\frac{1}{2\sqrt 2}\log t+1$.
Then by the Feynman-Kac formula \eqref{feynmankac},
using \eqref{eq:someulog} and \eqref{eq:phiupath}, for $t$ sufficiently large,
\begin{align} \label{eq:dagger}
u(t,x)&\geq t^{-M-1}e^{t-\log t} \exp\left(-\left(2rM +\tfrac{2M}{\alpha \delta -1}r^{1-\alpha \delta}+L e^{2L}\log t\right)\right) \notag\\
&\quad \mathbb P_x \bigg(B(t-\log t)\in [\tfrac{5}{4}\sqrt 2 \log t+1,\tfrac{5}{4}\sqrt 2 \log t+2], B(s)\geq f(s)\,\,\forall s\in [r,t-\log t-r]\bigg).
\end{align}

We now aim to prove a lower bound for the probability above.
For $y\in [\tfrac{5}{4}\sqrt 2 \log t+1,\tfrac{5}{4}\sqrt 2 \log t+2]$ and
$x=\sqrt 2 t +\frac{1}{2\sqrt 2}\log t+1$,
since $\frac{5}{4}\sqrt 2 = \sqrt 2 +\frac{1}{2\sqrt 2}$,
\begin{align*}
&\psub{x}{B(s)\geq f(s)\,\,\forall s\in [r,t-\log t-r] \bigg| 
B(t-\log t)=y}\\
&=\p{\xi^{t-\log t}(s)+\frac{s}{t-\log t}y+\frac{t-\log t-s}{t-\log t}x\geq f(s)\,\,\forall s\in [r,t-\log t-r] }\\
&=\mathbb P\bigg( \xi^{t-\log t}(s)+\sqrt 2 (t-s)+\tfrac{1}{2\sqrt 2}\log t+
\frac{s}{t-\log t}y'+\frac{t-\log t-s}{t-\log t}\\
&\hspace{9cm}\geq f(s)\quad \forall s\in [r,t-\log t-r] \bigg)
\end{align*}
where $y'=y-(\sqrt2+(2\sqrt 2)^{-1})\log t\in [1,2]$.
Therefore by the definition of $f$ in \eqref{eq:fdef},
\begin{align} \label{eq:light(1)}
&\psub{x}{B(s)\geq f(s)\,\,\forall s\in [r,t-\log t-r] \bigg| 
B(t-\log t)=y} \notag\\
&\geq \p{\xi^{t-\log t}(s)\geq \min(s^\delta,(t-\log t-s)^\delta)\,\,\forall s\in [r,t-\log t-r] }.
\end{align}
By Lemma~\ref{lem:bram2}, there exists $r_{1/2}<\infty$ such that for $r>r_{1/2}$ and $t-\log t>3r$,
\begin{align} \label{eq:light(2)}
&\p{\xi^{t-\log t}(s)\geq \min(s^\delta,(t-\log t-s)^\delta)\,\forall s\in [r,t-\log t-r] } \notag\\
&\geq \tfrac{1}{2}\p{\xi^{t-\log t}(s)\geq -\min(s^\delta,(t-\log t-s)^\delta)\,\forall s\in [r,t-\log t-r] } \notag\\
&\geq \tfrac{1}{2}\p{\xi^{t-\log t}(s)\geq 0\,\forall s\in [r,t-\log t-r] }.
\end{align}
We can now estimate this probability. 
Recall that for $0=s_0<s_1<\ldots <s_n = t$, conditional on $(\xi^t(s_i))_{i=1}^{n-1}$, 
$(\xi^t(s_{i-1}+u),0\leq u \leq s_{i}-s_{i-1})_{i=1}^n$ are independent and
for each $i\in \{1,\ldots, n\}$,
 $(\xi^t(s_{i-1}+u),0\leq u \leq s_{i}-s_{i-1})$ is a Brownian bridge from $\xi^t(s_{i-1})$ to $\xi^t(s_i)$ in time $s_i - s_{i-1}$
(this is the domain Markov property of the Brownian bridge).
We have 
\begin{align*}
&\p{\xi^{t-\log t}(s)\geq 0\,\forall s\in [r,t-\log t-r] }\\
&\geq \p{\xi^{t-\log t}(r)\geq 1,\xi^{t-\log t}(t-\log t-r)\geq 1,\xi^{t-\log t}(s)\geq 0\,\forall s\in [r,t-\log t-r] }\\
&\geq \p{\xi^{t-\log t}(r)\geq 1,\xi^{t-\log t}(t-\log t-r)\geq 1 }\left(1-e^{-2/(t-\log t-2r)}\right)\\
&\geq \p{\xi^{t-\log t}(r)\geq 1}\p{\xi^{t-\log t-r}(r)\geq 1} \left(1-e^{-2/t}\right)\\
&\geq \p{Z\geq \left( \frac{t-\log t-r}{r(t-\log t -2r)}\right) ^{1/2}}^2 (1-e^{-2/t}),
\end{align*}
where the second inequality follows by the domain Markov property of the Brownian bridge and Lemma~\ref{lem:bram1}, the third inequality follows by the domain Markov property, and in the last line $Z\sim N(0,1)$ since $\xi^t(s)\sim N(0,s(t-s)/t)$.
For $r$ sufficiently large, for $t-\log t>3r$,
$\left( \frac{t-\log t-r}{r(t-\log t -2r)}\right) ^{1/2}\leq \Phi^{-1}(3/4),$
so 
\begin{align} \label{eq:light(3)}
\p{\xi^{t-\log t}(s)\geq 0\,\forall s\in [r,t-\log t-r] }
&\geq \tfrac{1}{16} (1-e^{-2/t}) \notag \\
&\geq \tfrac{1}{16}t^{-1},
\end{align}
for $t$ sufficiently large, since $e^{-a}\leq 1-a/2$ for $0\leq a\leq \log 2$.
It follows by combining~\eqref{eq:light(1)},~\eqref{eq:light(2)} and~\eqref{eq:light(3)} that 
\begin{align} \label{eq:estabovef}
&\psub{x}{B(t-\log t)\in [\tfrac{5}{4}\sqrt 2 \log t+1,\tfrac{5}{4}\sqrt 2 \log t+2], B(s)\geq f(s)\,\,\forall s\in [r,t-\log t-r]} \notag\\
&\geq \tfrac{1}{32}t^{-1}\psub{x}{B(t-\log t)\in [\tfrac{5}{4}\sqrt 2 \log t+1,\tfrac{5}{4}\sqrt 2 \log t+2]}.
\end{align}
By \eqref{eq:Btbound}, recalling that $x=\sqrt 2 t +\frac{1}{2\sqrt 2}\log t+1$, we have
\begin{align*}
&\psub{x}{B(t-\log t)\in [\tfrac{5}{4}\sqrt 2 \log t+1,\tfrac{5}{4}\sqrt 2 \log t+2]}\\
&\geq \frac{1}{\sqrt{2\pi t}}\exp\left(-\frac{(\sqrt 2 t +\frac{1}{2\sqrt 2}\log t-\frac{5}{4}\sqrt 2 \log t)^2}{2(t-\log t)}\right)\\
&= \tfrac{1}{\sqrt{2\pi}}e^{-t}t^{1/2}.
\end{align*}
Substituting into \eqref{eq:estabovef} and \eqref{eq:dagger},
if we fix $r$ sufficiently large, then for $t$ sufficiently large and with $t-\log t>3r$, for $x=\sqrt 2 t+\frac{1}{2\sqrt 2 }\log t+1$,
\begin{align*}
u(t,x)&\geq t^{-M-1}e^{t-\log t} \exp\left(-\left(2rM +\tfrac{2M}{\alpha \delta -1}r^{1-\alpha \delta}+L e^{2L}\log t\right)\right) 
\tfrac{1}{32\sqrt{2\pi}}e^{-t}t^{-1/2}\\
&\geq t^{-M-L e^{2L}-3}
\end{align*}
for $t$ sufficiently large.

Hence there exists $T\in (1,\infty)$ such that for $t\geq T$, for $x=\sqrt 2 t+\frac{1}{2\sqrt 2}\log t+1$, $u(t,x)\geq t^{-M-L e^{2L}-3}$, and $T^{-M-L e^{2L}-3}< z_0(1/2)$, where $z_0(1/2)$ is defined in Lemma~\ref{lem:travelu}.
For $t\geq T$, by Lemma~\ref{lem:growu}, letting $C=C(1/2)$ and $R=R(1/2)$, there exist $s_0\in [0,C(M+L e^{2L}+3)\log t]$ and $y_0\in [-R,R]$ such that
$u(t+s_0,x+y_0)\geq 1/2$.
Then by Lemma~\ref{lem:travelu}, for any $s\geq \max(t^*(1),R)$ and $y\in [-R,R]$,
$u(t+s_0+s,x+y_0+y)\geq m^*(1)$.
In particular, for any $t\geq T$ and $s'\geq C(M+L e^{2L}+3)\log t+\max(t^*(1),R)$,
we have
\begin{equation} \label{eq:uhighatx}
u(t+s',\sqrt 2 t+\tfrac{1}{2\sqrt 2 }\log t+1)\geq m^*(1).
\end{equation}

Take $K>\sqrt 2 C(M+L e^{2L}+3)$.
Now for $y\geq \sqrt 2 T+\frac{1}{2\sqrt 2}\log T+1$,
there exists $s=s(y)\geq T$ such that $y=\sqrt 2 s+\frac{1}{2\sqrt 2}\log s+1$.
If $y\leq \sqrt 2 t-K\log t$ for some $t>0$, then $s<t-(K/\sqrt 2)\log t$.
Since $s\geq T$, by \eqref{eq:uhighatx},
for $s'\geq C(M+L e^{2L}+3)\log s+\max(t^*(1),R)$ we have
$u(s+s',y)\geq m^*(1)$.
Hence for $t'\geq t+(C(M+L^2 e^{2L}+3)-(K/\sqrt 2))\log t+\max(t^*(1),R)$ we have
$u(t',y)\geq m^*(1)$.
Since $K>\sqrt 2 C(M+L^2 e^{2L}+3)$, it follows that if $t$ is sufficiently large, for $t'\geq t$,
$u(t',y)\geq m^*(1)$.

We now have that for $t$ sufficiently large, for $y\in [\sqrt 2 T+\frac{1}{2\sqrt 2}\log T+1,\sqrt 2 t -K\log t]$, $u(t,y)\geq m^*(1)$.
Finally, as in the proof of Theorem~\ref{thm:speed}, letting $C=C(1/2)$, $R=R(1/2)$ and $z_0=z_0(1/2)$, by Lemma~\ref{lem:growu}, there exist $s_0\in[0,C\log (1/\min(z_0,u(1,0)))]$ and $y_0\in [-R,R]$ such that $u(1+s_0,y_0)\geq 1/2$.
Then by Lemma~\ref{lem:travelu}, for $t\geq t^*(1)$, for $x\in [y_0-t,y_0+t]$ we have 
$u(1+s_0+t,x)\geq m^*(1)$.
Hence for $t$ sufficiently large,
for $x\in [0,\frac{1}{2}t]$
we have $u(t,x)\geq m^*(1)$.
This completes the proof.
\end{proof}

We can now use Proposition~\ref{prop:logdelay} prove the upper bound of Theorem~\ref{thm:lighttail}, i.e.~that $$\lim_{t\to\infty}\sup_{x\geq \sqrt 2 t -\frac{3}{2\sqrt 2}\log t+10\log \log t}u(t,x)=0.$$
The proof uses ideas from the proof of Proposition~7.3 in \cite{bramson1983}.

\begin{prop} \label{prop:upperboundDlog}
For $\epsilon>0$, there exists $T<\infty$ such that for $t\geq T$, $u(t,x)<\epsilon$ $\forall x\geq \sqrt 2 t -\frac{3}{2\sqrt 2}\log t+10\log \log t$.
\end{prop}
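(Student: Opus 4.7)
Plan. The idea is to follow the strategy of Proposition~7.3 in~\cite{bramson1983}, combining the Feynman-Kac formula~\eqref{feynmankac0} with the lower bound on $\phi*u$ supplied by Proposition~\ref{prop:logdelay} and the ballot/entropic-repulsion Lemmas~\ref{lem:bram1} and~\ref{lem:bram2}. Write $m(t) := \sqrt 2\, t - \tfrac{3}{2\sqrt 2}\log t$ and $x_t := m(t) + 10\log\log t$. Using $u_0 \leq L\,\mathbf{1}_{(-\infty,L]}$ and $\phi*u\geq 0$,
\begin{equation*}
u(t,x) \leq L e^t\, \mathbb{E}_x\!\left[\mathbf{1}\{B(t)\leq L\}\exp\!\left(-\int_0^t \phi*u(t-s,B(s))\,ds\right)\right].
\end{equation*}
Repeating the Gaussian computation of Lemma~\ref{lem:t-1u} shows that for $x\geq x_t$ the prefactor $Le^t\,\psub{x}{B(t)\leq L}$ is only of order $t/(\log t)^{10\sqrt 2}$, so I must extract from the expectation an additional decay of at least order $t^{-1}(\log t)^{10\sqrt 2-\delta}$ for some $\delta>0$.

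Next I turn Proposition~\ref{prop:logdelay} together with the assumption $\phi\geq \eta$ on $(-\sigma,\sigma)$ into the pointwise lower bound $\phi*u(s,y)\geq \kappa := 2\eta\sigma m^*(1)$, valid whenever $s\geq T_0$ and $y\in [\sigma, \sqrt 2\, s - K\log s - \sigma]$, where $K, T_0$ come from Proposition~\ref{prop:logdelay}. Define the barrier $h(s) := \sqrt 2\,(t-s) - K\log(t-s) - \sigma$ for $s\in[0,t-T_0]$. Conditioning on $B(t)=y$ (and absorbing the Gaussian tail for $y<-t^{1/2}\log t$ into a negligible error) represents $(B(s))_{0\leq s\leq t}$ as a Brownian bridge with mean $\mu_s := \tfrac{t-s}{t}x + \tfrac{s}{t}y$ and fluctuation $\xi^t(s)$. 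For $x\in[x_t,x_t+1]$ and $y$ bounded, a direct computation gives $\mu_s - h(s) = K\log(t-s) - \tfrac{t-s}{t}\tfrac{3}{2\sqrt 2}\log t + O(\log\log t)$, which is positive of order $\log t$ throughout the bulk $[r,t-r]$ for large fixed $r$.

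Following Bramson, I split the bridge paths into two cases. In the first case, $B(s)\in[\sigma,h(s)]$ on a subinterval of $[r,t-r]$ of length proportional to $t$; then the exponential factor is bounded by $e^{-\kappa(t-2r)}$, which is completely negligible against the prefactor. In the second case, the bridge stays above a slight shift of $h(s)$ on a long portion of $[r,t-r]$: this is handled by first applying Lemma~\ref{lem:bram2} to ignore the logarithmic mismatch between the bridge mean and the barrier, reducing the problem to a bridge staying above a zero barrier, which is then controlled by Lemma~\ref{lem:bram1}. Performing this comparison carefully (after decomposing the bridge at $s=r$ and $s=t-r$ as in Proposition~\ref{prop:logdelay}) yields a probability of order $t^{-3/2}(\log t)^{O(1)}$, i.e.\ the classical Bramson ``$t^{-3/2}$'' ballot factor.

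Combining both cases gives $u(t,x) \lesssim t^{-1/2}(\log t)^{-10\sqrt 2 + O(1)}$ uniformly in $x\geq x_t$, which tends to $0$ as $t\to\infty$; choosing $T$ large enough makes this bound less than $\epsilon$. The main obstacle is the second case above, since the bridge mean is only $O(\log t)$ above $h(s)$ while its typical fluctuations are of size $\sqrt t$: applying Lemma~\ref{lem:bram1} directly to the barrier $h$ is not sharp enough, and one really needs the entropic repulsion estimate of Lemma~\ref{lem:bram2} to move from the ``true'' barrier to a slightly shifted one at which the ballot estimate becomes applicable with the correct exponent.
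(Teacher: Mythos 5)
Your overall frame matches the paper's: Feynman--Kac, the prefactor $Le^t\,\psub{x}{B(t)\leq L}\asymp t(\log t)^{-10\sqrt 2}$ for $x\geq \sqrt 2 t-\frac{3}{2\sqrt 2}\log t+10\log\log t$, and the kill rate $\phi\ast u\geq 2\sigma\eta m^*(1)$ on $[\sigma,\sqrt 2 s-K\log s-\sigma]$ supplied by Proposition~\ref{prop:logdelay}. But your case decomposition has a genuine gap. The exponential factor only sees the \emph{total} time the path spends in $[\sigma,h(s)]$, so the complement of your first case is not your second case: a path can dip below $h$ (indeed far below any ``slight shift'' of $h$) at many times spread through $[r,t-r]$ while spending total time of order $\log t$ or less there, so that the kill factor cannot beat the prefactor of order $t$, and yet there is no interval on which ``$B(s)$ stays above a shifted barrier for all $s$'', which is what Lemmas~\ref{lem:bram1} and~\ref{lem:bram2} require. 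Handling exactly this regime is where the paper's proof does its real work: it introduces the deeper curve $\tfrac{t-s}{t}x-\min(s^{1/4},(t-s)^{1/4})$, the unit-interval events $E_j$, and the estimate that a path which dips to this curve during $[j,j+1]$ and returns above the kill barrier within time $K'\log t$ must oscillate by $(\log t)^{5/4}$ in a window of length $K'\log t$, an event of superpolynomially small probability; only after discarding all the $E_j$ (and, separately, the paths that dip below $\sigma$, via a reflection estimate) is one left with a whole-interval barrier event to which the bridge lemmas apply. Your sketch offers no substitute for this step.

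A second, related error: the conditional (on $B(t)=y\leq L$) probability that the bridge stays above the barrier is of order $(\log t)^{O(1)}/t$, not $t^{-3/2}(\log t)^{O(1)}$. The barrier lies only polylogarithmically below the bridge's chord near both ends of the bulk interval, so Lemma~\ref{lem:bram1} yields roughly $2y_1y_2/t$ with $y_1,y_2$ polylogarithmic; in the paper, whose bulk interval must start at $16(\log t)^5$ to make the oscillation argument work, this comes out as $5(\log t)^{10}/t$. If the factor really were $t^{-3/2}$, no $\log\log t$ shift would be needed at all; the statement works precisely because the polylogarithmic ballot factor is beaten by $e^{-\sqrt 2\cdot 10\log\log t}=(\log t)^{-10\sqrt 2}$, so the exponent you assign to that factor matters, and yours is not correct (though the conclusion survives once it is replaced by the true polylogarithmic-over-$t$ bound).
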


\begin{proof}
Recall that $\phi \geq \eta \text{ a.e.~on }(-\sigma,\sigma).$
Fix $t$ large
and $x\in [\sqrt 2 t -\frac{3}{2\sqrt 2}\log t,\sqrt 2 t]$.
Then for $j\in [0,t-1]$, define the event
\begin{equation} \label{eq:Ejdef}
E_j=\left\{\exists s\in [j,j+1]:
B(s)<\tfrac{t-s}{t}x-\min(s^{1/4},(t-s)^{1/4})\right\}\cap \left\{\inf_{s\in [0,t-(\log t)^5]}B(s)\geq \sigma
 \right\}.
\end{equation}
Let
\begin{equation} \label{eq:Djdef}
D_j=\Esub{x}{\I{E_j}\exp\left(\int_0^{t} \left(1-\phi \ast u (t-s,B(s))\right)ds \right) u_0(B(t))}.
\end{equation}
By Proposition~\ref{prop:logdelay}, for $s$ sufficiently large,
for $y\in [\sigma,\sqrt 2 s-K\log s-\sigma]$,
\begin{equation*} \label{eq:(star)est}
\phi\ast u(s,y)\geq \eta\int_{-\sigma}^{\sigma} u(s,y-y')dy' \geq 2\sigma \eta m^*(1).
\end{equation*}
Therefore for $j,k\geq 0$ with $j+k\leq t$, if $t-(j+k)$ is sufficiently large then if
$B(s)\in [\sigma,\sqrt 2 (t-s)-K\log (t-s)-\sigma]$ $\forall s\in [j,j+k]$, we have 
\begin{equation*} 
\int_j^{j+k}\phi\ast u(t-s,B(s))ds \geq 2\sigma \eta m^*(1)k.
\end{equation*}
Hence for $t$ sufficiently large, for $k\in[0, t-(\log t)^5]$ and $j\in [0,t-(\log t)^5-k]$,
\begin{align*}
D_j
&\leq \Esub{x}{\I{E_j}\I{\exists s \in [j,j+k]\text{ s.t.~}B(s)\geq \sqrt 2 (t-s)-K\log (t-s)-\sigma} e^t u_0(B(t))}\\
&\quad +
\Esub{x}{\I{E_j}\I{B(s)\leq \sqrt 2 (t-s)-K\log (t-s)-\sigma\, \forall s\in [j,j+k]} e^{\int_0^{t} (1-\phi \ast u (t-s,B(s)))ds} u_0(B(t))}\\
&\leq Le^t\psub{x}{E_j \cap \{\exists s \in [j,j+k]\text{ s.t.~}B(s)\geq \sqrt 2 (t-s)-K\log (t-s)-\sigma\}\cap \{B(t)\leq L\}}\\
&\quad +Le^te^{-2\sigma \eta m^*(1) k}\psub{x}{B(t)\leq L},
\end{align*}
since $\|u_0\|_\infty \leq L$, $u_0(y)=0$ $\forall y\geq L$,  $t-(j+k)\geq (\log t)^5$ and $\inf_{s\in[j,j+k]}B(s)\geq \sigma$ on $E_j$.

Let $k=K' \log t$ for some constant $K'$ such that $2\sigma \eta m^*(1)K'>7/2$.
Then for $t$ sufficiently large, for $j\in [0,(t-1)/2]$,
by the definition of $E_j$ in~\eqref{eq:Ejdef} and
since $x\leq \sqrt 2 t$,
\begin{align*}
D_j
&\leq Le^t\mathbb P_{x}\bigg(B(t)\leq L, \inf_{s\in [j,j+1]}B(s)<\sqrt 2(t-j)-j^{1/4},\\
&\hspace{3cm}\sup_{s\in [j,j+K'\log t]}B(s)\geq \sqrt 2 (t-j-K'\log t)-K\log t -\sigma\bigg)\\
&\quad +Le^t t^{-2\sigma \eta m^*(1)K'}\psub{x}{B(t)\leq L}.
\end{align*}
Also for $t$ sufficiently large, for $j\in [(t-1)/2, t-2(\log t)^5]$,
\begin{align*}
D_j
&\leq Le^t\mathbb P_{x}\bigg(B(t)\leq L, \inf_{s\in [j,j+1]}B(s)<\sqrt 2(t-j)-(t-(j+1))^{1/4},\\
&\hspace{3cm}\sup_{s\in [j,j+K'\log t]}B(s)\geq \sqrt 2 (t-j-K'\log t)-K\log t -\sigma\bigg)\\
&\quad +Le^tt^{-2\sigma \eta m^*(1)K'}\psub{x}{B(t)\leq L}.
\end{align*}
For $j\in [16(\log t)^5,t-16(\log t)^5-1]$, if there exist $ s_1\in [j,j+1]$ such that $B(s_1)<\sqrt 2 (t-j)-2(\log t)^{5/4}$ and $s_2\in [j,j+K'\log t]$ such that $B(s_2)\geq \sqrt 2 (t-j)-(\sqrt 2 K'+K)\log t -\sigma$ then 
$|B(s_1)-B(s_2)|\geq 2(\log t)^{5/4}-(\sqrt 2 K'+K)\log t -\sigma$.
Therefore, for $t$ sufficiently large that $2(\log t)^{5/4}-(\sqrt 2 K'+K)\log t -\sigma \geq (\log t)^{5/4}$, for $j\in [16(\log t)^5,t-16(\log t)^5-1]$,
\begin{align} \label{eq:Djbound}
D_j
&\leq Le^t\mathbb P_{x}\bigg(B(t)\leq L, \sup_{s_1,s_2\in [j,j+K'\log t]}|B(s_1)-B(s_2)|\geq (\log t)^{5/4}\bigg) \notag \\
&\quad +Le^tt^{-2\sigma \eta m^*(1)K'}\psub{x}{B(t)\leq L}.
\end{align}

We now aim to estimate each of these probabilities.
For $y\in [-2t,L]$,
\begin{align*}
&\psub{x}{\sup_{s_1,s_2\in [j,j+K'\log t]}|B(s_1)-B(s_2)|\geq (\log t)^{5/4} \bigg| B(t)=y}\\
&\leq \psub{x}{\sup_{s\in [j,j+K'\log t]}|B(s)-B(j)|\geq \tfrac{1}{2}(\log t)^{5/4} \bigg| B(t)=y}\\
&= \p{\sup_{s\in [j,j+K'\log t]}\left|\tfrac{s-j}{t}y+\tfrac{j-s}{t}x+\xi^t(s)-\xi^t(j)\right|\geq \tfrac{1}{2}(\log t)^{5/4} }.
\end{align*}
Since $0\leq x\leq \sqrt 2 t$ and $|y|\leq 2t$,
for $s\in [j,j+K'\log t]$,
$$\left|\tfrac{s-j}{t}y+\tfrac{j-s}{t}x\right|\leq \tfrac{K'\log t}{t}(2t+\sqrt 2 t)\leq \tfrac{1}{4}(\log t)^{5/4}$$
for $t$ sufficiently large.
Therefore
\begin{align*}
&\psub{x}{\sup_{s_1,s_2\in [j,j+K'\log t]}|B(s_1)-B(s_2)|\geq (\log t)^{5/4} \bigg| B(t)=y}\\
&\leq \p{\sup_{s\in [j,j+K'\log t]}|\xi^t(s)-\xi^t(j)|\geq \tfrac{1}{4}(\log t)^{5/4} }\\
&= \psub{0}{\sup_{s\in [j,j+K'\log t]}\left|B(s)-B(j)+\tfrac{j-s}{t}B(t)\right|\geq \tfrac{1}{4}(\log t)^{5/4} }\\
&\leq \psub{0}{\sup_{s\in [j,j+K'\log t]}|B(s)-B(j)|\geq \tfrac{1}{8}(\log t)^{5/4} }+ \psub{0}{\tfrac{K' \log t}{t}|B(t)|\geq \tfrac{1}{8}(\log t)^{5/4} }\\
&\leq 2\psub{0}{\sup_{s\in [0,K'\log t]}B(s)\geq \tfrac{1}{8}(\log t)^{5/4} } + 2\psub{0}{B(t)\geq \tfrac{1}{8K'}t(\log t)^{1/4} },
\end{align*}
where the third line follows since $(B(s)-\tfrac{s}{t}B(t), s\in [0,t])\eqdist (\xi^t(s),s\in [0,t])$ and the last line follows by the symmetry of Brownian motion and the Markov property.
By the reflection principle, it follows that for $y\in [-2t,L]$,
\begin{align} \label{eq:(A)3}
&\psub{x}{\sup_{s_1,s_2\in [j,j+K'\log t]}|B(s_1)-B(s_2)|\geq (\log t)^{5/4} \bigg| B(t)=y} \notag\\
&\leq 4\psub{0}{(K'\log t)^{1/2} B(1) \geq \tfrac{1}{8}(\log t)^{5/4} } + 2\psub{0}{t^{1/2} B(1)\geq \tfrac{1}{8K'}t(\log t)^{1/4} }\notag \\
&\leq 4\exp(-(\log t)^{3/2}/(128 K'))+2\exp (-t(\log t)^{1/2}/(128 (K')^2)),
\end{align}
by~\eqref{eq:gaussiantail}.
Moreover, by~\eqref{eq:gaussiantail},
since $x\geq \sqrt 2 t-\frac{3}{2\sqrt 2}\log t$,
\begin{align} \label{eq:(B)3}
\psub{x}{B(t)\in [-2t,L]}\leq
\psub{x}{B(t)\leq L}
&\leq e^{-(\sqrt 2 t-\frac{3}{2\sqrt 2}\log t-L)^2/(2t)} \notag\\
&\leq e^{-t+\frac{3}{2} \log t+\sqrt 2 L}.
\end{align} 
Also since $x\geq 0$, by~\eqref{eq:gaussiantail},
\begin{align} \label{eq:(C)3}
\psub{x}{B(t)\leq -2t}
\leq e^{-\tfrac{1}{2t}4t^2}
=e^{-2t}.
\end{align} 
Therefore, combining~\eqref{eq:(A)3}, \eqref{eq:(B)3} and \eqref{eq:(C)3}, 
\begin{align*}
&\mathbb P_{x}\bigg(B(t)\leq L, \sup_{s_1,s_2\in [j,j+K'\log t]}|B(s_1)-B(s_2)|\geq (\log t)^{5/4}\bigg)\\
&\hspace{0.5cm}\leq e^{-t+\frac{3}{2} \log t+\sqrt 2 L}(4e^{-(\log t)^{3/2}/(128 K')}+2e^{-t/(128 (K')^2)})+e^{-2t}.
\end{align*}
Substituting into \eqref{eq:Djbound} and using~\eqref{eq:(B)3} again,
it follows that for $t$ sufficiently large,
for $j\in [16(\log t)^5,t-16(\log t)^5-1]$,
\begin{align*}
D_j
&\leq Le^te^{-t+\frac{3}{2} \log t+\sqrt 2 L}(4e^{-(\log t)^{3/2}/(128 K')}+2e^{-t/(128 (K')^2)}+t^{-2\sigma \eta m^*(1)K'})+Le^{-t}\\
&\leq t^{-2}
\end{align*}
for $t$ sufficiently large, since $2\sigma \eta m^*(1)K'>7/2$ by our choice of $K'$.
Therefore by the definition of $D_j$ in \eqref{eq:Djdef} and the Feynman-Kac formula \eqref{feynmankac0}, for $t$ sufficiently large, 
\begin{align*}
u(t,x)&\leq \sum_{j=\lceil 16 (\log t)^5 \rceil}^{\lfloor t-16(\log t)^5-1 \rfloor}D_j \\
&\hspace{1cm}+\Esub{x}{\prod_{j=\lceil 16 (\log t)^5 \rceil}^{\lfloor t-16(\log t)^5-1 \rfloor}\I{E_j^c}\exp\left(\int_0^{t} \left(1-\phi \ast u (t-s,B(s))\right)ds \right) u_0(B(t))}\\
&\leq t\cdot t^{-2}+Le^t\Esub{x}{\prod_{j=\lceil 16 (\log t)^5 \rceil}^{\lfloor t-16(\log t)^5-1 \rfloor}\I{E_j^c}\I{B(t)\leq L}},
\end{align*}
since $\phi\ast u \geq 0$, $\|u_0\|_\infty \leq L$ and $u_0(y)=0$ $\forall y \geq L$.
By the definition of $E_j$ in \eqref{eq:Ejdef}, it follows that for $t$ sufficiently large, for $x\in[\sqrt 2 t -\frac{3}{2\sqrt 2 }\log t,\sqrt 2 t]$,
\begin{align} \label{eq:ubound}
u(t,x)&\leq t^{-1}+Le^t
\psub{x}{\inf_{s\in [0,t-(\log t)^5]}B(s)< \sigma} \notag\\
&\quad +Le^t\mathbb P_{x}\bigg(B(t)\leq L, B(s)\geq \tfrac{t-s}{t}x-\min(s^{1/4},(t-s)^{1/4})\, \notag\\
&\hspace{6cm}\forall s\in [\lceil 16 (\log t)^5 \rceil,\lfloor t-16(\log t)^5\rfloor]\bigg).
\end{align}
Since $x\geq \sqrt 2 t -\frac{3}{2\sqrt 2}\log t$, by the reflection principle and then using \eqref{eq:gaussiantail2},
\begin{align} \label{eq:infBbound}
\psub{x}{\inf_{s\in [0,t-(\log t)^5]}B(s)< \sigma}
&\leq 2\psub{0}{B(t-(\log t)^5)>\sqrt 2 t -\tfrac{3}{2\sqrt 2}\log t-\sigma}\notag\\
&\hspace{-1cm}\leq \frac{2}{\sqrt{2\pi}}\left(\frac{t-(\log t)^5}{(\sqrt 2 t -\frac{3}{2\sqrt 2}\log t-\sigma)^2}\right)^{1/2}\exp\left(-\frac{(\sqrt 2 t -\frac{3}{2\sqrt 2}\log t-\sigma)^2}{2(t-(\log t)^5)} \right)\notag\\
&\hspace{-1cm}\leq \frac{2}{\sqrt{2\pi}}t^{-1/2}\left(\frac{t-(\log t)^5}{2t -3\log t-2\sqrt 2 \sigma}\right)^{1/2}\exp\left(-t\frac{2t-3\log t-2\sqrt 2 \sigma}{2t-2(\log t)^5} \right)\notag\\
&\hspace{-1cm}\leq t^{-1/2}e^{-t}
\end{align}
for $t$ sufficiently large.

We shall now estimate the second probability in~\eqref{eq:ubound}.
For $y\leq L$,
and $x\in[\sqrt 2 t -\frac{3}{2\sqrt 2 }\log t,\sqrt 2 t]$,
\begin{align*}
&\psub{x}{B(s)\geq \tfrac{t-s}{t}x-\min(s^{1/4},(t-s)^{1/4})\,
\forall s\in [\lceil 16 (\log t)^5 \rceil,\lfloor t-16(\log t)^5\rfloor]\bigg| B(t)=y}\\
&=\p{\xi^t(s)\geq -\tfrac{s}{t}y-\min(s^{1/4},(t-s)^{1/4})\,
\forall s\in [\lceil 16 (\log t)^5 \rceil,\lfloor t-16(\log t)^5\rfloor]}\\
&\leq\p{\xi^t(s)\geq -L-\min(s^{1/4},(t-s)^{1/4})\,
\forall s\in [16 (\log t)^5 +1,t-16(\log t)^5-1]}\\
&\leq 2\p{\xi^t(s)\geq -L+\min(s^{1/4},(t-s)^{1/4})\,
\forall s\in [16 (\log t)^5 +1,t-16(\log t)^5-1]}
\end{align*}
for $t$ sufficiently large, by Lemma~\ref{lem:bram2}.
It follows that 
\begin{align} \label{eq:star}
&\psub{x}{B(s)\geq \tfrac{t-s}{t}x-\min(s^{1/4},(t-s)^{1/4})\,
\forall s\in [\lceil 16 (\log t)^5 \rceil,\lfloor t-16(\log t)^5\rfloor]\bigg| B(t)=y} \notag\\
&\leq 2\p{\xi^t(s)\geq -L\,
\forall s\in [16 (\log t)^5 +1,t-16(\log t)^5-1]}\notag \\
&\leq 2\mathbb P\big(\xi^t(s)\geq -L\,
\forall s\in [16 (\log t)^5 +1,t-16(\log t)^5-1],\notag \\
&\hspace{3cm}\xi^t(16 (\log t)^5 +1)\leq (\log t)^5,
\xi^t(t-16 (\log t)^5 -1)\leq (\log t)^5\big)\notag \\
&\hspace{0.5cm}+4\p{\xi^t(16 (\log t)^5 +1)\geq (\log t)^5},
\end{align}
since $\xi^t(16 (\log t)^5 +1)\stackrel{d}{=}\xi^t(t-16 (\log t)^5 -1)$.
For $y_1$, $y_2\in [-L,(\log t)^5]$, by Lemma~\ref{lem:bram1} and the domain Markov property for the Brownian bridge,
\begin{align*}
&\mathbb P\bigg(\xi^t(s)\geq -L\,
\forall s\in [16 (\log t)^5 +1,t-16(\log t)^5-1] \bigg|\\
&\hspace{3cm}\xi^t(16 (\log t)^5 +1)=y_1,
\xi^t(t-16 (\log t)^5 -1)=y_2\bigg)\\
&=1-\exp\left(-\frac{2}{t-32(\log t)^5-2}(y_1+L)(y_2+L) \right)\\
&\leq \frac{2}{t-32(\log t)^5-2}((\log t)^5+L)^2,
\end{align*}
since $1-e^{-a}\leq a$ for $a\geq 0$.

Since $\xi^t(s)\sim N(0,\frac{s(t-s)}{t})$, letting $Z\sim N(0,1)$,
\begin{align*}
\p{\xi^t(16 (\log t)^5 +1)\geq (\log t)^5}
&\leq \p{(16 (\log t)^5 +1)^{1/2}Z\geq (\log t)^5}\\
&\leq \exp(-\tfrac{1}{2}(\log t)^{10}(16 (\log t)^5 +1)^{-1})\\
&\leq \exp(-\tfrac{1}{40}(\log t)^{5})
\end{align*}
for $t$ sufficiently large, where the second inequality holds by \eqref{eq:gaussiantail}.
Substituting into \eqref{eq:star}, for any $y\leq L$ we have 
\begin{align*}
&\psub{x}{B(s)\geq \tfrac{t-s}{t}x-\min(s^{1/4},(t-s)^{1/4})\,
\forall s\in [\lceil 16 (\log t)^5 \rceil,\lfloor t-16(\log t)^5\rfloor]\bigg| B(t)=y} \\
&\leq \frac{4}{t-32(\log t)^5-2}((\log t)^5+L)^2+4\exp(-\tfrac{1}{40}(\log t)^{5})\\
&\leq \frac{5(\log t)^{10}}{t}
\end{align*}
for $t$ sufficiently large.
Substituting this result and \eqref{eq:infBbound} into \eqref{eq:ubound}, we have that for $t$ sufficiently large, for $x\in [\sqrt 2 t -\frac{3}{2\sqrt 2}\log t,\sqrt 2 t]$,
\begin{align*}
u(t,x)&\leq t^{-1}+Le^t
t^{-1/2}e^{-t} 
 +Le^t\frac{5(\log t)^{10}}{t}\psub{x}{B(t)\leq L}.
\end{align*}
We have $x=\sqrt 2 t -\frac{3}{2\sqrt 2}\log t+x_0$ for some $x_0\in [0,\frac{3}{2\sqrt 2}\log t]$; by \eqref{eq:gaussiantail2},
$$
\psub{x}{B(t)\leq L}
\leq \tfrac{1}{\sqrt{2 \pi}} z^{-1}e^{-z^2/2},
$$
where $z=(\sqrt 2 t -\frac{3}{2\sqrt 2}\log t+x_0-L)t^{-1/2} \geq t^{1/2}$ for $t$ sufficiently large.
Hence 
\begin{align*}
\psub{x}{B(t)\leq L}&\leq \frac{1}{\sqrt{2\pi t}}\exp\left(-(\sqrt 2 t -\tfrac{3}{2\sqrt 2}\log t+x_0-L)^2/(2t)\right)\\
&\leq \frac{1}{\sqrt{2\pi t}}\exp\left(-(2t^2-3t\log t +2\sqrt 2 t x_0-2\sqrt 2 tL-\tfrac{3}{\sqrt 2}x_0\log t-2x_0L)/(2t)\right)\\
&\leq \frac{1}{\sqrt{2\pi t}}\exp\left(-t+\tfrac{3}{2}\log t -\sqrt 2 x_0+\sqrt 2 L+1\right)
\end{align*}
for $t$ sufficiently large. Therefore, for $t$ sufficiently large,
for $x=\sqrt 2 t -\frac{3}{2\sqrt 2}\log t+x_0$ with $x_0\in [0,\frac{3}{2\sqrt 2}\log t]$,
\begin{align*}
u(t,x)&\leq t^{-1}+Lt^{-1/2} 
 +L\frac{5(\log t)^{10}}{t}\frac{1}{\sqrt{2\pi t}}\exp\left(\tfrac{3}{2}\log t -\sqrt 2 x_0+\sqrt 2 L+1\right)\\
&=t^{-1}+Lt^{-1/2} 
 +\frac{5 L e^{\sqrt 2 L+1}}{\sqrt{2\pi }}(\log t)^{10}e^{-\sqrt 2 x_0}.
\end{align*}
It follows that if $x_0\in [10\log \log t,\frac{3}{2\sqrt 2}\log t]$,
i.e.~if $x\in [\sqrt 2 t -\frac{3}{2\sqrt 2}\log t+10\log\log t,\sqrt 2 t]$,
 then 
\begin{align*}
u(t,x)&\leq t^{-1}+Lt^{-1/2} 
 +\frac{5L e^{\sqrt 2 L+1}}{\sqrt{2\pi }}(\log t)^{10(1-\sqrt 2)}.
\end{align*}
As in \eqref{uabovesqrt2} in the proof of Theorem~\ref{thm:speed}, for $t\geq L/(\sqrt 2-1)$, for $x\geq \sqrt 2 t$,
$$
u(t,x)\leq \frac{L}{\sqrt{2\pi t}}e^{\sqrt 2 L}.
$$
Hence for $\epsilon>0$ fixed, for $t$ sufficiently large, for any $x\geq \sqrt 2 t -\frac{3}{2\sqrt 2}\log t+10\log \log t$,
we have 
$u(t,x)<\epsilon$, as required.
\end{proof}
As a consequence of Proposition~\ref{prop:upperboundDlog} and Lemma~\ref{lem:growu}, we can adapt an idea from the proof of Proposition~7.2 in \cite{bramson1983} to show that for fixed $t$, $u(t,x)$ decreases exponentially on $x\geq \sqrt 2 t-\frac{3}{2\sqrt 2}\log t+10\log \log t$.

Let $z_0=z_0(1/2)$, $R=R(1/2)$ and $C=C(1/2)$ as defined in Lemma~\ref{lem:growu}.
\begin{cor} \label{cor:expdecay}
There exists $T<\infty$ such that for $t\geq T$ and $s> C\log (1/z_0)$,
if $x\geq \sqrt 2 (t+s)-\frac{3}{2\sqrt 2}\log (t+s)+10\log \log (t+s)+R$
then $u(t,x)\leq \exp(-C^{-1}s)$.
\end{cor}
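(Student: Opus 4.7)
The plan is a short argument by contradiction combining Lemma~\ref{lem:growu} (applied with $\delta = 1/2$) with the upper bound of Proposition~\ref{prop:upperboundDlog}. Set $z := \exp(-s/C)$; then the hypothesis $s > C \log(1/z_0)$ is exactly $z < z_0$, and $C \log(1/z) = s$. Suppose for contradiction that some $(t, s, x)$ in the hypothesised range satisfies $u(t, x) > z$. Since $t \geq T \geq 1$ (with $T$ to be chosen below), Lemma~\ref{lem:growu} produces a time $t' \in [t, t + C\log(1/z)] = [t, t+s]$ and a location $y \in [x - R, x + R]$ with $u(t', y) \geq 1/2$.

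To derive a contradiction I would apply Proposition~\ref{prop:upperboundDlog} with $\epsilon = 1/2$, which yields some $T_0 < \infty$ such that $u(r, w) < 1/2$ for every $r \geq T_0$ and every $w \geq f(r)$, where
\[
f(r) := \sqrt 2\, r - \tfrac{3}{2\sqrt 2}\log r + 10 \log \log r.
\]
Since $f'(r) = \sqrt 2 - \frac{3}{2\sqrt 2\, r} + \frac{10}{r \log r} \to \sqrt 2 > 0$ as $r \to \infty$, the function $f$ is increasing on $[T_1, \infty)$ for some $T_1 < \infty$. Setting $T := \max(T_0, T_1, 1)$ ensures $t' \geq t \geq T \geq T_0$ and, since $t \leq t' \leq t+s$ with all three lying in $[T_1, \infty)$, also $f(t') \leq f(t+s)$.

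The hypothesis $x \geq f(t+s) + R$ gives $y \geq x - R \geq f(t+s) \geq f(t')$, so Proposition~\ref{prop:upperboundDlog} forces $u(t', y) < 1/2$, contradicting the output of Lemma~\ref{lem:growu}. Hence $u(t, x) \leq e^{-s/C}$, as required. There is no serious obstacle here: the argument is essentially a dictionary translation between the ``grow-to-$1/2$'' lemma and the already-established $1/2$-threshold upper bound. The only bookkeeping point is the monotonicity check on $f$, which is what permits us to absorb the uncertainty in $t' \in [t, t+s]$ by evaluating the front position at the uniform upper endpoint $t+s$, precisely matching the shape of the hypothesis on $x$.
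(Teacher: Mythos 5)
Your argument is correct and is essentially the same as the paper's: a contradiction argument using Lemma~\ref{lem:growu} to produce a point $(t',y)$ with $u(t',y)\geq 1/2$, combined with Proposition~\ref{prop:upperboundDlog} (with $\epsilon=1/2$) and the monotonicity of $r\mapsto \sqrt 2 r-\frac{3}{2\sqrt 2}\log r+10\log\log r$ to rule it out. The only cosmetic difference is that the paper notes this function is increasing on all of $(1,\infty)$, whereas you restrict to $[T_1,\infty)$, which suffices equally well.
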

\begin{proof}
Let $g(t):=\sqrt 2 t-\frac{3}{2\sqrt 2}\log t+10\log \log t$ for $t>1$, and note that $g$ is increasing on $(1,\infty)$.
Take $\epsilon=1/2$ in Proposition~\ref{prop:upperboundDlog}; it follows that there exists $T\in (1,\infty)$ such that for $t\geq T$, we have $u(t,x)<1/2$ $\forall x\geq g(t)$.

We now use a contradiction argument.
Suppose that for some $t\geq T$ and $s> C\log (1/z_0)$, there exists $x\geq g(t+s)+R$
such that $u(t,x)>\exp(-C^{-1}s)$.
Then since $\exp(-C^{-1}s)< z_0$,
by Lemma~\ref{lem:growu} there exist
$t'\in [t,t+s]$ and $y\in [x-R,x+R]$ such that $u(t',y)\geq 1/2$.
Since $x\geq g(t+s)+R$,
we have that
$
y\geq g(t+s) \geq g(t')
$
since $g$ is increasing on $(1,\infty)$.
But then we have $t'\geq T$, $y\geq g(t')$ and $u(t',y)\geq 1/2$ which is a contradiction.
\end{proof}

We can now prove the remaining statement of Theorem~\ref{thm:lighttail}, i.e.~that 
there exist $A<\infty$ and $m^*>0$ such that
$$
\liminf_{t\to\infty}\inf_{x\in[0,\sqrt 2 t-\frac{3}{2\sqrt 2}\log t-A(\log \log t)^3]}u(t,x)\geq m^*.
$$
\begin{prop} \label{prop:loglowerbound}
There exist $A<\infty$ and $T<\infty$ such that for $t\geq T$, 
$u(t,x)\geq m^*(1)$
$\forall x\in [0,\sqrt 2 t-\frac{3}{2\sqrt 2}\log t-A(\log \log t)^3]$.
\end{prop}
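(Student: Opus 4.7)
The plan is to apply the Feynman--Kac formula at a point $x_t:=\sqrt{2}t-\tfrac{3}{2\sqrt{2}}\log t-A_0(\log\log t)^3$ just behind the Bramson front for a sufficiently large constant $A_0$, showing directly that $u(t,x_t)\ge m^*(1)$, and then propagating this pointwise bound to the full interval via Lemmas~\ref{lem:growu} and~\ref{lem:travelu}. The central new ingredient compared with Proposition~\ref{prop:logdelay} is Corollary~\ref{cor:expdecay}, which gives an \emph{exponential} (rather than polynomial) bound on $u$ past the front and lets us control $\int_0^{t-h}\phi*u(t-s,B(s))\,ds$ tightly along Brownian paths.

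Concretely, fix a large constant $h$ (so that Proposition~\ref{prop:logdelay} applies at time $h$) and apply~\eqref{feynmankac} at $(t,x_t)$ over the interval $[0,t-h]$. Restrict the expectation to the event $\mathcal{E}$ that $B(t-h)\in[0,1]$, whence $u(h,B(t-h))\ge m^*(1)$ by Proposition~\ref{prop:logdelay}, and that $B(s)\ge f(s)$ for $s\in[r_0,t-h-r_0]$, where $f(s)=g(t-s)+R+A_1\bigl(\log\log(\min(s,t-h-s)+e)\bigr)^3$ lies above the upper-bound front $g(t-s)=\sqrt{2}(t-s)-\tfrac{3}{2\sqrt 2}\log(t-s)+10\log\log(t-s)$ from Proposition~\ref{prop:upperboundDlog} by a slowly growing buffer. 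The shift $A_0(\log\log t)^3$ in $x_t$ is chosen so that $x_t$ lies comfortably below $f(0)$, ensuring that the Brownian bridge from $x_t$ has non-negligible probability of rising to meet $f$ near $s=r_0$.

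To bound $\int_0^{t-h}\phi*u(t-s,B(s))\,ds$ on $\mathcal{E}$, I would split the convolution $\phi*u(t-s,B(s))=\int\phi(y)u(t-s,B(s)-y)\,dy$ at a threshold $Y(s)$ proportional to the buffer in $f$: for $y\le Y(s)$ the point $B(s)-y$ lies sufficiently far past the front that Corollary~\ref{cor:expdecay} forces $u(t-s,B(s)-y)$ to be exponentially small, while for $y>Y(s)$ I would use the global bound $u\le M$ together with the tail assumption $\int_{Y(s)}^{\infty}\phi(y)\,dy\le Y(s)^{-\alpha}$. Integrating in $s$ should yield an $O(1)$ bound on the Feynman--Kac exponent. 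For the Brownian probability of $\mathcal{E}$, I would follow the template of the proof of Proposition~\ref{prop:logdelay}: condition on $B(t-h)=y\in[0,1]$, apply Lemma~\ref{lem:bram2} to replace ``$B(s)\ge f(s)$'' by a weaker condition at the cost of a constant factor, and then use Lemma~\ref{lem:bram1} with the domain Markov property. Crucially, because $x_t$ now sits behind the front, the Gaussian density for $\{B(t-h)\in[0,1]\}$ starting from $x_t$ is of order $te^{-t-h}e^{\sqrt 2 A_0(\log\log t)^3}$, i.e.\ enlarged by a factor $e^{\sqrt 2 A_0(\log\log t)^3}$ relative to the $A_0=0$ case; combined with the $e^{t-h}$ Feynman--Kac exponential, the $O(1)$ integral bound, and an entropic factor of polynomial order in $\log t$ divided by $t$, this enlargement eventually dominates all other terms once $A_0$ is taken large enough, giving $u(t,x_t)\ge m^*(1)$ for all $t\ge T(A_0)$.

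For the final conclusion, for each $y\in[0,\sqrt{2}T-\tfrac{3}{2\sqrt 2}\log T-A(\log\log T)^3]$ with $A>A_0$ sufficiently large, pick $t_y\le T$ so that $x_{t_y}$ is close to $y$ (which forces $T-t_y$ to be of order $(\log\log T)^3$), apply Lemma~\ref{lem:growu} to find $s_0\le C\log(2/m^*(1))$ and $y_0\in[x_{t_y}-R,x_{t_y}+R]$ with $u(t_y+s_0,y_0)\ge 1/2$, and apply Lemma~\ref{lem:travelu} with $c=1$ to conclude $u(T,y)\ge m^*(1)$; the constant-order losses in position and time from this propagation fit easily inside the $(\log\log T)^3$ tolerance. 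The main obstacle is the simultaneous tuning of the buffer $f-g$ and the threshold $Y(s)$: the buffer must grow fast enough to force $\int_0^{t-h}\phi*u\,ds=O(1)$, yet slow enough both that the shift $A_0(\log\log t)^3$ remains a minor correction and that the Brownian bridge estimates via Lemmas~\ref{lem:bram1}--\ref{lem:bram2} retain their usual order. Balancing these constraints is what pins down the cube in the $(\log\log t)^3$ correction.
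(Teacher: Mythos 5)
There is a genuine gap, in fact two, both in the central step where you claim $u(t,x_t)\ge m^*(1)$ directly at the shifted point $x_t=\sqrt 2 t-\frac{3}{2\sqrt 2}\log t-A_0(\log\log t)^3$. First, your bound $\int_0^{t-h}\phi\ast u(t-s,B(s))\,ds=O(1)$ cannot hold with a buffer and threshold of size $(\log\log s)^3$: the hypothesis~\eqref{eq:lighttail} only gives polynomial tail decay, so the far-field part of your split contributes $\int M\,Y(s)^{-\alpha}ds\asymp t(\log\log t)^{-3\alpha}$, and the near-field part, controlled through Corollary~\ref{cor:expdecay}, contributes $\int\exp(-c(\log\log s)^3)ds\asymp t\,e^{-c(\log\log t)^3}$; both diverge. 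To make these integrals converge you are forced to take a polynomially growing buffer $\min(s^\delta,(t-s)^\delta)$ with $\delta>1/\alpha$ (as the paper does), and then the unconstrained segments near the two ends of $[0,t-h]$ — which must have length at least of order $(\log\log t)^{1/\delta}$ so that the buffer exceeds the $10\log\log$ term in Corollary~\ref{cor:expdecay} — contribute $\Theta\bigl((\log\log t)^{1/\delta}\bigr)$, not $O(1)$. This is precisely why the paper only obtains the sub-constant bound $u(t,\sqrt 2 t-\frac{3}{2\sqrt 2}\log t)\ge e^{-4M(\log\log t)^{1/\delta}}$ at the front itself.

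Second, your accounting of the gain from starting behind the front is backwards. The Gaussian factor $e^{\sqrt 2 A_0(\log\log t)^3}$ in $\mathbb P_{x_t}(B(t-h)\in[0,1])$ is real, but your constraint curve $f(s)=g(t-s)+\cdots$ sits roughly $A_0(\log\log t)^3+10\log\log t$ \emph{above} the chord of the bridge near $s=0$, so on your event the bridge must climb that amount by time $r_0$. For constant $r_0$ this costs about $\exp\bigl(-c\,A_0^2(\log\log t)^6/r_0\bigr)$, which overwhelms the gain; if instead you delay the constraint to a window of length $r_1$, you pay up to $M r_1$ in the Feynman--Kac exponent, and optimising over $r_1$ the total cost is of order $(\tfrac12 A_0^2+M)(\log\log t)^3$ against a gain of $\sqrt 2 A_0(\log\log t)^3$, which fails for all $A_0$ once $M\ge 1$ (and $M$ cannot be assumed $<1$). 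So "take $A_0$ large enough" makes matters worse, not better: the climb/potential cost scales at least linearly, indeed quadratically, in $A_0$. The paper's route avoids this entirely: it evaluates $u$ \emph{at} the front position, accepts the small bound $e^{-4M(\log\log t)^{1/\delta}}$, and then converts it into the constant $m^*(1)$ using Lemma~\ref{lem:growu} (growth in time $O((\log\log t)^{1/\delta})$) and Lemma~\ref{lem:travelu}, finally trading that time delay for the spatial correction $A(\log\log t)^3$ via the parametrisation $y_0=\sqrt 2 s_0-\frac{3}{2\sqrt 2}\log s_0$ together with Proposition~\ref{prop:logdelay}. Your final propagation paragraph is close in spirit to this last step, but it rests on the pointwise claim $u(t,x_t)\ge m^*(1)$, which your argument does not establish.
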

\begin{proof}
Take $\delta\in (\max(1/\alpha,1/3),1/2)$ and take $t$ large.
For $s\in (e, t/2]$ sufficiently large that $s^\delta\geq 10\log \log (s+s^\delta)+R$,
if $x\geq \frac{s}{t}(\sqrt 2 t-\frac{3}{2\sqrt 2}\log t)+(\sqrt 2 +1)s^\delta$,
then
\begin{align*}
x&\geq \sqrt 2 (s+s^\delta)-\tfrac{3}{2\sqrt 2}\tfrac{s}{t}\log t+s^\delta\\
&\geq \sqrt 2 (s+s^\delta)-\tfrac{3}{2\sqrt 2}\log s+s^\delta\\
&\geq \sqrt 2 (s+s^\delta)-\tfrac{3}{2\sqrt 2}\log (s+s^\delta)+s^\delta\\
&\geq \sqrt 2 (s+s^\delta)-\tfrac{3}{2\sqrt 2}\log (s+s^\delta)+10\log \log (s+s^\delta)+R,
\end{align*}
where the second line holds since for $e<s<t$, $\frac{\log s}{s}>\frac{\log t}{t}$.
By Corollary~\ref{cor:expdecay}, if $s$ is sufficiently large and $s^\delta> C\log (1/z_0)$, it follows that $u(s,x)\leq \exp(-C^{-1}s^\delta)$.

Similarly, for $s\in [t/2,t-(\log \log t)^{1/\delta}]$,
if $x\geq \frac{s}{t}(\sqrt 2 t-\frac{3}{2\sqrt 2}\log t)+(\sqrt 2 +12)(t-s)^\delta$,
then
\begin{align*}
x&\geq \sqrt 2 (s+(t-s)^\delta)-\tfrac{3}{2\sqrt 2}\tfrac{s}{t}\log t+12(t-s)^\delta\\
&\geq \sqrt 2 (s+(t-s)^\delta)-\tfrac{3}{2\sqrt 2}\log (s+(t-s)^\delta)+12(t-s)^\delta\\
&\geq \sqrt 2 (s+(t-s)^\delta)-\tfrac{3}{2\sqrt 2}\log (s+(t-s)^\delta)+10\log \log (s+(t-s)^\delta)+R,
\end{align*}
where the last line holds for $t$ sufficiently large, since $12(t-s)^\delta \geq 12\log \log t$.
By Corollary~\ref{cor:expdecay}, if $t$ is sufficiently large and $\log \log t> C\log (1/z_0)$ then it follows that $u(s,x)\leq \exp(-C^{-1}(t-s)^\delta)$.

Therefore, for $t$ sufficiently large, for $s\in [(\log \log t)^{1/\delta},t-(\log \log t)^{1/\delta}]$,
if $x\geq \frac{s}{t}(\sqrt 2 t-\frac{3}{2\sqrt 2}\log t)+(\sqrt 2 +12)\min(s^{\delta},(t-s)^\delta)$
then
$u(s,x)\leq \exp(-C^{-1}\min(s^\delta, (t-s)^\delta)).$
It follows that for $s\in [(\log \log t)^{1/\delta}, t/2]$, if $x\geq \frac{s}{t}(\sqrt 2 t-\frac{3}{2\sqrt 2}\log t)+(\sqrt 2 +13)s^{\delta}$
then by Proposition~\ref{prop:globalbound},
\begin{align*}
\phi \ast u(s,x)
&\leq M\int_{s^{\delta}}^\infty \phi(r)dr +\exp(-C^{-1}s^{\delta})\int^{s^{\delta}}_{-\infty}\phi(r)dr\\
&\leq Ms^{-\alpha \delta}+\exp(-C^{-1}s^{\delta})\\
&\leq 2Ms^{-\alpha \delta}
\end{align*}
for $t$ sufficiently large,
where the second inequality follows for $t$ sufficiently large by~\eqref{eq:lighttail} and since $\phi\geq 0$ and $\int_{-\infty}^\infty \phi(x)dx=1$.
Similarly, for $t$ sufficiently large, for $s\in [t/2,t-(\log \log t)^{1/\delta}]$, if $x\geq \frac{s}{t}(\sqrt 2 t-\frac{3}{2\sqrt 2}\log t)+(\sqrt 2 +13)(t-s)^{\delta}$
then
\begin{align*}
\phi \ast u(s,x)
&\leq M(t-s)^{-\alpha \delta}+\exp(-C^{-1}(t-s)^{\delta})\leq 2M(t-s)^{-\alpha \delta}.
\end{align*}
Therefore, if $B(s)\geq \frac{t-s}{t}(\sqrt 2 t-\frac{3}{2\sqrt 2}\log t)+(\sqrt 2 +13)\min(s^{\delta},(t-s)^\delta)$
$\forall s\in [(\log \log t)^{1/\delta},t-(\log \log t)^{1/\delta}]$,
then by the above inequalities and Proposition~\ref{prop:globalbound},
\begin{align*}
\int_0^{t-1} \phi \ast u (t-s,B(s))ds &\leq 2M(\log \log t)^{1/\delta}
+2\int_{(\log \log t)^{1/\delta}}^{t/2}2Ms^{-\alpha \delta}ds\\
&=2M(\log \log t)^{1/\delta}+4M(\alpha \delta-1)^{-1}((\log \log t)^{(1-\alpha \delta)/\delta}-(t/2)^{1-\alpha \delta})\\
&\leq 3M(\log \log t)^{1/\delta}
\end{align*}
for $t$ sufficiently large, since $\alpha \delta >1$.

It follows that for $t$ sufficiently large, for $x=\sqrt 2 t - \frac{3}{2\sqrt 2}\log t$, by the Feynman-Kac formula \eqref{feynmankac} and since $u\geq 0$,
\begin{align*}
u(t,x)&\geq
e^{t-1}e^{-3M(\log \log t)^{1/\delta}}\\
&\qquad \Esub{x}{\I{B(s)\geq \frac{t-s}{t}x+(\sqrt 2 +13)\min(s^{\delta},(t-s)^\delta)
\forall s\in [(\log \log t)^{1/\delta},\, t-(\log \log t)^{1/\delta}]}
u(1,B(t-1))}.
\end{align*}
As in the proof of Proposition~\ref{prop:logdelay}, we can take $\epsilon\in (0,\min(\frac{1}{2}u(1,0),1))$ sufficiently small that Lemma~\ref{lem:abscty} holds for this choice of $\epsilon$, and then
for $|y|\leq \epsilon^3$, we have $u(1,y)\geq\epsilon$.
Hence letting $t'=(\log \log t)^{1/\delta}$,
\begin{align} \label{eq:ulower}
u(t,x)&\geq
e^{t-1}e^{-3M(\log \log t)^{1/\delta}}\epsilon \notag\\
&\qquad  \psub{x}{B(s)\geq \tfrac{t-s}{t}x+(\sqrt 2 +13)\min(s^{\delta},(t-s)^\delta)
\,\,\forall s\in [t',t-t'],|B(t-1)|\leq \epsilon^3}.
\end{align}
We now aim for a lower bound on this probability.
For $y\in [-\epsilon^3,\epsilon^3]$,
\begin{align*}
&\psub{x}{B(s)\geq \tfrac{t-s}{t}x+(\sqrt 2 +13)\min(s^{\delta},(t-s)^\delta)
\,\,\forall s\in [t',t-t']\bigg|B(t-1)=y}\\
&= \p{\xi^{t-1}(s)+\tfrac{t-1-s}{t-1}x+\tfrac{s}{t-1}y\geq \tfrac{t-s}{t}x+(\sqrt 2 +13)\min(s^{\delta},(t-s)^\delta)
\,\,\forall s\in [t',t-t']}\\
&= \p{\xi^{t-1}(s)\geq \tfrac{s}{t(t-1)}x-\tfrac{s}{t-1}y+(\sqrt 2 +13)\min(s^{\delta},(t-s)^\delta)
\,\,\forall s\in [t',t-t']}\\
&\geq \p{\xi^{t-1}(s)\geq \sqrt 2+2+(\sqrt 2 +13)\min(s^{\delta},(t-1-s)^\delta)
\,\,\forall s\in [t'-1,t-t']},
\end{align*}
for $t$ sufficiently large that $\min(s^{\delta},(t-s)^\delta)-\min(s^{\delta},(t-1-s)^\delta)\leq (\sqrt 2 +13)^{-1}$ $\forall s\in [t',t-t']$, and since $x/t<\sqrt 2$ and $|y|<1$.
By Lemma~\ref{lem:bram2}, it follows that for $t$ sufficiently large,
\begin{align*}
&\psub{x}{B(s)\geq \tfrac{t-s}{t}x+(\sqrt 2 +13)\min(s^{\delta},(t-s)^\delta)
\,\,\forall s\in [t',t-t']\bigg|B(t-1)=y}\\
&\geq \tfrac{1}{2}\p{\xi^{t-1}(s)\geq \sqrt 2+2-(\sqrt 2 +13)\min(s^{\delta},(t-1-s)^\delta)
\,\,\forall s\in [t'-1,t-t']}\\
&\geq \tfrac{1}{2}\p{\xi^{t-1}(s)\geq \sqrt 2+2
\,\,\forall s\in [1,t-2]}.
\end{align*}
By the domain Markov property of the Brownian bridge and Lemma~\ref{lem:bram1}, letting $Z\sim N(0,1)$,
\begin{align*}
&\p{\xi^{t-1}(s)\geq \sqrt 2+2\,
\,\,\forall s\in [1,t-2]}\\
&\qquad\geq \p{\xi^{t-1}(1)\geq \sqrt 2 +3, \xi^{t-1}(t-2)\geq \sqrt 2 +3}(1-e^{-2/(t-3)})\\
&\qquad\geq \p{\xi^{t-1}(1)\geq \sqrt 2 +3}\p{\xi^{t-2}(1)\geq \sqrt 2 +3}(1-e^{-2/t})\\
&\qquad \geq \tfrac{1}{t}\p{\left(\tfrac{t-3}{t-2}\right)^{1/2}Z\geq \sqrt 2 +3}^2.
\end{align*}
for $t$ sufficiently large, since $1-e^{-a}\geq a/2$ for $0<a<\log 2$
and $\xi^t(s)\sim N\left(0,\frac{s(t-s)}{t}\right)$.
Therefore for $t$ sufficiently large, for $y\in [-\epsilon^3,\epsilon^3]$,
\begin{align*}
&\psub{x}{B(s)\geq \tfrac{t-s}{t}x+(\sqrt 2 +13)\min(s^{\delta},(t-s)^\delta)
\,\,\forall s\in [t',t-t']\bigg|B(t-1)=y}\\
&\geq \tfrac{1}{2t}\Phi(-5)^2.
\end{align*}
Substituting into \eqref{eq:ulower}, we have that for $t$ sufficiently large, for $x=\sqrt 2 t-\frac{3}{2\sqrt 2}\log t$, 
\begin{align*}
u(t,x)&\geq
e^{t-1}e^{-3M(\log \log t)^{1/\delta}}\epsilon \tfrac{1}{2t}\Phi(-5)^2 \psub{x}{|B(t-1)|\leq \epsilon^3}.
\end{align*}
Since $x=\sqrt 2 t -\frac{3}{2\sqrt 2}\log t$ and $\epsilon<1$, we have by \eqref{eq:Btbound} that
\begin{align*}
\psub{x}{|B(t-1)|\leq \epsilon^3}
&\geq \frac{2\epsilon^3}{\sqrt{2\pi t}}\exp\left(-\tfrac{1}{2(t-1)}(\sqrt 2 t -\tfrac{3}{2\sqrt 2}\log t+1)^2\right)\\
&\geq \frac{2\epsilon^3}{\sqrt{2\pi t}}\exp\left(-t+\tfrac{3}{2}\log t-\sqrt 2-1\right)
\end{align*}
for $t$ sufficiently large. Hence for $t$ sufficiently large,
\begin{align*}
u(t,\sqrt 2 t -\tfrac{3}{2\sqrt 2}\log t)
&\geq 
\tfrac{1}{\sqrt{2\pi }}e^{-\sqrt 2-2}\epsilon^4 \Phi(-5)^2e^{-3M(\log \log t)^{1/\delta}} \\
&\geq e^{-4M(\log \log t)^{1/\delta}}
\end{align*}
for $t$ sufficiently large.

For $t$ sufficiently large, it follows by Lemma~\ref{lem:growu} that there exist $t'\in [t,t+4CM(\log \log t)^{1/\delta}]$ and $y\in [-R,R]$ such that
$u(t',\sqrt 2 t -\tfrac{3}{2\sqrt 2}\log t+y)\geq 1/2$.
Then by Lemma~\ref{lem:travelu}, for $s\geq t^*(1)+R+4CM(\log \log t)^{1/\delta}$ we have $u(t+s,\sqrt 2 t -\tfrac{3}{2\sqrt 2}\log t)\geq m^*(1)$.

Note that $t\mapsto \sqrt 2 t-\frac{3}{2\sqrt 2}\log t$ is increasing on $[1,\infty)$.
Take $t>1$ sufficiently large that 
\begin{equation} \label{eq:t_cond}
CM(\log \log t)^{1/\delta}\geq t^*(1)+R
\end{equation}
and take
$y_0\in [\sqrt 2,\sqrt 2 t-\frac{3}{2\sqrt 2}\log t-5\sqrt 2 CM(\log \log t)^{1/\delta}]$; then 
$y_0=\sqrt 2 s_0 -\frac{3}{2\sqrt 2}\log s_0$ for some $s_0\in [1,t)$.
Then if $s_0$ is sufficiently large, for $s\geq t^*(1)+R+4CM(\log \log s_0)^{1/\delta}$ we have $u(s_0+s,y_0)\geq m^*(1)$.
Since 
$$y_0=\sqrt 2 s_0 -\tfrac{3}{2\sqrt 2}\log s_0\leq \sqrt 2 t-\tfrac{3}{2\sqrt 2}\log t-5\sqrt 2 CM(\log \log t)^{1/\delta},$$
we have 
$$\sqrt 2 (t-s_0)\geq \tfrac{3}{2\sqrt 2}(\log t-\log s_0)+5\sqrt 2 CM(\log \log t)^{1/\delta}\geq 5\sqrt 2 CM(\log \log t)^{1/\delta}.$$
Therefore, by our choice of $t$ in~\eqref{eq:t_cond}, $t-s_0\geq t^*(1)+R+4CM(\log \log t)^{1/\delta}.$
It follows that for $t$ sufficiently large, for $y_0\in [\sqrt 2,\sqrt 2 t-\frac{3}{2\sqrt 2}\log t-5\sqrt 2 CM(\log \log t)^{1/\delta}]$ sufficiently large,
$u(t,y_0)\geq m^*(1)$.

It follows by
Proposition~\ref{prop:logdelay} that for $t$ sufficiently large, for any $y\in [0,\sqrt 2 t-\frac{3}{2\sqrt 2}\log t-5\sqrt 2 CM(\log \log t)^{1/\delta}]$, we have 
$u(t,y)\geq m^*(1)$.
Since $1/\delta\leq 3$, this completes the proof.
\end{proof}

\section{Proofs of Theorems~\ref{thm:heavylower}--\ref{thm:heavyweak}} \label{sec:heavy}
Before proving Theorems~\ref{thm:heavylower}--\ref{thm:heavyweak}, we need an estimate on the probability that a Brownian motion stays consistently ahead of a particular curve. 
We shall use the following result from \cite{roberts2015}.
\begin{prop}[Simplified version of Proposition 4 in \cite{roberts2015}]
\label{prop:roberts}
There exists a function $A:(0,\infty)\to (0,\infty)$ such that the following holds.
Suppose $f:[0,t]\to \R$ and $L:[0,t]\to [1,\infty)$ are twice continuously differentiable, with $f(0)<0$ and $f(0)+L(0)>0$.
Also suppose there exists a constant $Q>0$ such that 
\begin{align*}
&|L'(0)|L(0)+|L'(t)|L(t)+\int_0^t |L''(s)|L(s)ds +\int_0^t|f''(s)|L(s)ds\\
&\hspace{3cm}-|L'(0)|f(0)-|f'(0)|f(0)+\log L(0)+|f'(t)|L(t)
\leq Q,
\end{align*}
and that $\int_0^t L(s)^{-2}ds \geq 1$.
Then for any $0\leq p <q\leq 1$,
\begin{align*}
&\psub{0}{B(s)-f(s)\in (0,L(s))\,\,\forall s\leq t, B(t)-f(t)\in (pL(t),qL(t))}\\
&\hspace{0.5cm}
\geq A(Q)\exp\left(-\frac{1}{2}\int_0^t f'(s)^2ds-\frac{\pi^2}{2}\int_0^t \frac{1}{L(s)^{2}}ds \right)
\sin\left(\frac{-\pi f(0)}{L(0)} \right)
\int_p^q \sin(\pi \nu) d\nu .
\end{align*}
\end{prop}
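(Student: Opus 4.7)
The plan is to use successive Girsanov transformations to reduce the claim, first to a tube estimate for a standard Brownian motion in the (still time-varying) strip $(0,L(s))$, and then to the corresponding estimate for a standard Brownian motion inside a fixed strip, where the sine factors and the exponential rate $\pi^2/2$ come directly from the ground-state eigenfunction expansion of the Dirichlet Laplacian on $(0,1)$.

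First, I would set $W(s) = B(s) - f(s)$ and apply Girsanov so that the law of $B$ is absolutely continuous with respect to that of a standard Brownian motion starting at $-f(0) \in (0,L(0))$, with density
\begin{equation*}
\exp\Bigl(-\int_0^t f'(s)\, dW(s) - \tfrac{1}{2}\int_0^t f'(s)^2\, ds\Bigr).
\end{equation*}
Itô integration by parts gives
\begin{equation*}
\int_0^t f'(s)\, dW(s) = f'(t)W(t) - f'(0)W(0) - \int_0^t f''(s) W(s)\, ds,
\end{equation*}
and on the event $\{W(s) \in (0,L(s)) \;\forall s\leq t\}$ we have $|W(s)| \leq L(s)$ and $W(0) = -f(0)$, so the stochastic integral is bounded in absolute value by $|f'(t)|L(t) + |f'(0)f(0)| + \int_0^t |f''(s)|L(s)\, ds$, i.e.~by a contribution dominated by $Q$. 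Therefore on the target event the Girsanov factor is at least $e^{-Q} \exp(-\tfrac{1}{2}\int_0^t f'(s)^2\, ds)$, and the problem is reduced to proving a matching lower bound for the probability that a standard Brownian motion started at $-f(0)$ stays in the time-dependent strip $(0,L(s))$ with endpoint in $(pL(t), qL(t))$.

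To handle the varying strip width I would rescale by setting $U(s) = W(s)/L(s)$ and time-changing by $\tau(s) = \int_0^s L(u)^{-2}\, du$. After this rescaling the process $\widetilde U(\tau)$ satisfies an SDE of the form $d\widetilde U = d\widetilde B - (LL')(s(\tau))\, \widetilde U \, d\tau$, and a second Girsanov removes the drift at a cost controlled, again via integration by parts, by $|L'(0)|L(0)$, $|L'(t)|L(t)$ and $\int_0^t |L''(s)|L(s)\, ds$; the remaining boundary and $\log L(0)$ terms in $Q$ account for the Jacobian of the rescaling and the change in the endpoint window from $L(t)$ to $1$. One is left with estimating from below the probability that a standard Brownian motion started at $-f(0)/L(0) \in (0,1)$ stays in $(0,1)$ for time $\tau(t) \geq 1$ and ends in $(p,q)$. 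For this one uses the Dirichlet heat kernel expansion $\sum_{n\geq 1} 2 e^{-n^2\pi^2\tau/2} \sin(n\pi x)\sin(n\pi y)$; since $\tau(t) \geq 1$, the $n=1$ term dominates (up to a universal constant), producing exactly $\exp(-\tfrac{\pi^2}{2}\int_0^t L(s)^{-2}\, ds)\sin(-\pi f(0)/L(0))\int_p^q\sin(\pi\nu)\, d\nu$.

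The main obstacle will be the second step: the time change and the removal of the drift $LL' \widetilde U$ produce several error exponentials that must all be bounded by a single $e^{-Q}$-type factor. The somewhat elaborate definition of $Q$ in the hypothesis is designed exactly so that each error produced by the two Girsanov transformations, by the rescaling, and by the boundary corrections when passing from the ground-state sine on $(0,L(0))$ and $(0,L(t))$ to the sine on $(0,1)$, is simultaneously controlled. Once $Q$ is fixed, all these constants can be absorbed into a single function $A(Q)$, yielding the stated bound.
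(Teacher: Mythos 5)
The paper itself does not prove this proposition: it is imported verbatim (in simplified form) from Proposition 4 of \cite{roberts2015}, so the only comparison available is with that argument, whose overall shape (Girsanov to remove $f$, rescaling/time change to a unit strip, Dirichlet heat kernel on $(0,1)$) your plan reproduces. Your first step is fine: on the event, integration by parts bounds $\int_0^t f'(s)\,dW(s)$ by $|f'(t)|L(t)-|f'(0)|f(0)+\int_0^t|f''(s)|L(s)\,ds$, all of which sit inside $Q$, and the $-\tfrac12\int f'^2$ survives into the exponent. The final heat-kernel step, using $\int_0^t L(s)^{-2}ds\ge 1$ to make the ground-state mode dominant, is also standard.

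The genuine gap is in the second step, which you only assert. After setting $\widetilde U(\tau)=W(s(\tau))/L(s(\tau))$ with $\tau(s)=\int_0^s L(u)^{-2}du$, the drift to be removed is $-g(\tau)\widetilde U$ with $g(\tau)=(LL')(s(\tau))$, i.e.\ it is state-dependent, and the Girsanov exponent is $-\int g\widetilde U\,d\widetilde U-\tfrac12\int g^2\widetilde U^2\,d\tau$. If you bound these two pieces separately (as your sketch suggests), It\^o's formula gives $\int g\widetilde U\,d\widetilde U=\tfrac12[g\widetilde U^2]_0^{\tau(t)}-\tfrac12\int g'\widetilde U^2d\tau-\tfrac12\int g\,d\tau$, and both $\int|g'|\,d\tau$ and $\int g^2\widetilde U^2\,d\tau$ produce a term of size $\int_0^t L'(s)^2ds$, which is \emph{not} controlled by $Q$; the hypothesis would then be insufficient and the step does not close. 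What rescues the argument is an exact cancellation: combining $\tfrac12\int g'\widetilde U^2d\tau$ with $-\tfrac12\int g^2\widetilde U^2d\tau$ leaves only $\tfrac12\int L(s)L''(s)\widetilde U^2ds$, bounded by $\tfrac12\int|L''|L\,ds$, while the boundary terms give $\tfrac12|L'(t)|L(t)$ and $\tfrac12 L'(0)f(0)^2/L(0)\le -\tfrac12|L'(0)|f(0)$, and $\tfrac12\int g\,d\tau=\tfrac12\log\bigl(L(t)/L(0)\bigr)$, whose only harmful part is $-\tfrac12\log L(0)$ --- this, not any ``Jacobian'', is why $\log L(0)$ appears in $Q$. (There is in fact no Jacobian or endpoint-window cost at all: the map $U=W/L$ together with the time change carries the event $\{W(s)\in(0,L(s))\ \forall s,\ W(t)\in(pL(t),qL(t))\}$ exactly onto $\{\widetilde U\in(0,1),\ \widetilde U(\tau(t))\in(p,q)\}$.) Until you exhibit this cancellation and match each surviving error against the specific terms of $Q$, the decisive step of your proof is missing rather than merely routine.
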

We can use this to prove the following estimate.
\begin{lem} \label{lem:abovegamma}
Suppose $\gamma\in (\frac{1}{2},1)$ and $\epsilon>0$.
Then there exists a constant $c=c(\gamma,\epsilon)>0$ such that for $t$ sufficiently large, for $x\in [-2 t,2 t]$, $\delta \in (0,1)$,
\begin{align*}
&\psub{0}{B(s)\geq \tfrac{s}{t}x +\min(2s^\gamma, 2(t-s)^\gamma)\,\forall s\in [1,\tfrac{t}{2}-1]\cup [\tfrac{t}{2}+1,t-1], |B(t)-x|\leq \delta}\\
&\hspace{0.5cm}\geq c(1-\cos(\tfrac{1}{2}\pi \delta))\exp\left(-\frac{x^2}{2t}-\frac{\pi^2}{2\gamma+2\epsilon-1}t^{2\gamma+2\epsilon-1} \right).
\end{align*}
\end{lem}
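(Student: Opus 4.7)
The plan is to apply Proposition~\ref{prop:roberts} to $B$ directly, choosing the drift $f$ and tube width $L$ to absorb both the linear slope $\tfrac{s}{t}x$ and the curve $\min(2s^\gamma, 2(t-s)^\gamma)$. One advantage of baking $\tfrac{s}{t}x$ into $f$ (rather than first carrying out a Girsanov shift) is that the term $\tfrac12\int_0^t f'(s)^2\,ds$ in Proposition~\ref{prop:roberts} produces $\tfrac{x^2}{2t}$ automatically, matching the desired Gaussian factor.

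Concretely, I would take
\[
f(s)=\tfrac{s}{t}x+\tilde\varphi(s)-1, \qquad L(s)=1+\Psi(s),
\]
where $\tilde\varphi,\Psi:[0,t]\to[0,\infty)$ are smooth functions vanishing with zero derivative at $s=0$ and $s=t$; $\tilde\varphi$ is a $C^2$-regularisation of $3\min(s,t-s)^\gamma$ satisfying $\tilde\varphi(s)\geq 2\min(s,t-s)^\gamma+1$ on $[1,t/2-1]\cup[t/2+1,t-1]$; and $\Psi$ is a $C^2$-regularisation of $\min(s,t-s)^{1-\gamma-\epsilon}$. Then $f(0)=-1<0$, $f(0)+L(0)=1>0$, $L\geq 1$, and $f(s)\geq \tfrac{s}{t}x+\min(2s^\gamma,2(t-s)^\gamma)$ on the constrained intervals. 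Since $\tilde\varphi(0)=\tilde\varphi(t)=0$, the cross term $\int_0^t(x/t)\tilde\varphi'(s)\,ds$ vanishes, giving $\tfrac12\int_0^t f'(s)^2\,ds=\tfrac{x^2}{2t}+\tfrac12\int_0^t \tilde\varphi'(s)^2\,ds=\tfrac{x^2}{2t}+O(t^{2\gamma-1})$. Direct integration using $L(s)^{-2}\leq \min(s,t-s)^{2\gamma+2\epsilon-2}$ on $[1,t-1]$ gives
\[
\tfrac{\pi^2}{2}\int_0^t L(s)^{-2}\,ds \;\leq\; \tfrac{2^{1-2\gamma-2\epsilon}\pi^2}{2\gamma+2\epsilon-1}\,t^{2\gamma+2\epsilon-1}+O(1).
\]
Since $\gamma>\tfrac12$ and $\epsilon>0$, the prefactor $2^{1-2\gamma-2\epsilon}<1$, so for $t$ sufficiently large the $O(t^{2\gamma-1})+O(1)$ errors are absorbed into $\tfrac{\pi^2}{2\gamma+2\epsilon-1}t^{2\gamma+2\epsilon-1}$. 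Taking $p=(1-\delta)/2$ and $q=(1+\delta)/2$ identifies $f(t)+(pL(t),qL(t))=(x-\delta,x+\delta)$ with the event $\{|B(t)-x|<\delta\}$ and yields $\int_p^q\sin(\pi\nu)\,d\nu=\tfrac{2}{\pi}\sin(\tfrac{\pi\delta}{2})\geq \tfrac{2}{\pi}(1-\cos(\tfrac{\pi\delta}{2}))$, the last inequality coming from the double-angle identities $\sin(\tfrac{\pi\delta}{2})=2\sin(\tfrac{\pi\delta}{4})\cos(\tfrac{\pi\delta}{4})$ and $1-\cos(\tfrac{\pi\delta}{2})=2\sin^2(\tfrac{\pi\delta}{4})$ together with $\cot(\tfrac{\pi\delta}{4})\geq 1$ for $\delta\in(0,1)$; combined with $\sin(-\pi f(0)/L(0))=\sin(\pi/2)=1$, this yields the claimed bound.

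The main technical obstacle is verifying that the constant $Q$ in Proposition~\ref{prop:roberts} is bounded uniformly in $t$ and in $x\in[-2t,2t]$. The critical calculations are the integrals $\int_0^t|f''|L\,ds$ and $\int_0^t|L''|L\,ds$: in the interior $|\tilde\varphi''(s)|\asymp s^{\gamma-2}$ and $|\Psi''(s)|\asymp s^{-1-\gamma-\epsilon}$, which paired with $L(s)\asymp s^{1-\gamma-\epsilon}$ give integrable densities $\asymp s^{-1-\epsilon}$ and $\asymp s^{-2\gamma-2\epsilon}$ (using $\gamma>\tfrac12$); near the kinks at $s=0$, $s=t/2$, and $s=t$ an $O(1)$-scale smoothing keeps the local contributions bounded. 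The remaining boundary terms $|L'(0)|L(0)$, $|L'(t)|L(t)$, $|f'(0)|$, $|f'(t)|L(t)$, and $\log L(0)$ are then all $O(1)$ (note that $|f'(0)|=|x|/t\leq 2$ and $|f'(t)|\leq 2$ since $\tilde\varphi'(0)=\tilde\varphi'(t)=0$), and do not depend on $x$ in any way that is not uniformly bounded. Hence $Q=O(1)$ and Proposition~\ref{prop:roberts} gives the claimed bound with $c(\gamma,\epsilon)=\tfrac{2}{\pi}A(Q)$.
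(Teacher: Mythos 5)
Your proposal is the same strategy as the paper's: apply Proposition~\ref{prop:roberts} directly to $B$, with the linear slope $\tfrac{s}{t}x$ absorbed into $f$ (so that $\tfrac12\int_0^t f'(s)^2\,ds$ produces the $\tfrac{x^2}{2t}$ term), the curve $\min(2s^\gamma,2(t-s)^\gamma)$ built into $f$ on the constrained intervals, a tube of width $\asymp\min(s,t-s)^{1-\gamma-\epsilon}$, uniform-in-$t$, uniform-in-$x$ control of $Q$, and the extra $\epsilon$ in the exponent of $L$ used to swallow the lower-order $O(t^{2\gamma-1})$ contribution. The cosmetic differences (a symmetric terminal window $p=(1-\delta)/2$, $q=(1+\delta)/2$ instead of the paper's $p=0$, $q=\delta/2$; a globally shifted $f$ with $\tilde\varphi\geq 2\min(\cdot)^\gamma+1$ instead of exact equality on the interior; width $1+\Psi$ versus the paper's $2\min(\cdot)^{1-\gamma-\epsilon}$) do not change the argument.

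There is, however, one concrete slip: with $L(s)=1+\Psi(s)$ and $\Psi$ vanishing at $s=0$ and $s=t$, you get $L(0)=L(t)=1$, and then (i) $f(0)+L(0)=-1+1=0$, violating the hypothesis $f(0)+L(0)>0$ of Proposition~\ref{prop:roberts}; (ii) the key factor is $\sin(-\pi f(0)/L(0))=\sin(\pi)=0$, not $\sin(\pi/2)=1$, so the lower bound as written is vacuous; and (iii) $f(t)+(pL(t),qL(t))=\bigl(x-\tfrac{1+\delta}{2},\,x-\tfrac{1-\delta}{2}\bigr)$, not $(x-\delta,x+\delta)$. All three of your subsequent computations implicitly assume $L(0)=L(t)=2$, so the repair is immediate: take $L(s)=2+\Psi(s)$ (or otherwise arrange $L(0)=L(t)=2$, as the paper does). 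With that change, $L^{-2}$ only decreases, so your bound $\tfrac{\pi^2}{2}\int_0^t L(s)^{-2}ds\leq \tfrac{2^{1-2\gamma-2\epsilon}\pi^2}{2\gamma+2\epsilon-1}t^{2\gamma+2\epsilon-1}+O(1)$ still holds, the strict inequality $2^{1-2\gamma-2\epsilon}<1$ still provides the slack needed to absorb the error terms, and the rest of your argument (including the check that $Q$ is bounded uniformly in $t$ and $x\in[-2t,2t]$, and that $\int_0^t L(s)^{-2}ds\geq 1$ for large $t$, which you should state explicitly) goes through and yields the claimed bound.
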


\begin{proof}
By reducing $\epsilon$ if necessary, assume that $\gamma +\epsilon<1$.
For $t\geq 4$, $x\in [-2 t, 2 t]$ and $\delta\in (0,1)$,
\begin{align} \label{eq:star_heavy}
&\psub{0}{B(s)\geq \tfrac{s}{t}x +\min(2s^\gamma, 2(t-s)^\gamma)\,\forall s\in [1,\tfrac{t}{2}-1]\cup [\tfrac{t}{2}+1,t-1], |B(t)-x|\leq \delta} \notag \\
&\geq \psub{0}{B(s)-f(s)\in (0,L(s))\,\,\forall s\leq t, B(t)-f(t)\in (0,\delta)}
\end{align}
for any functions $f:[0,t]\to \R$ and $L:[0,t]\to [1,\infty)$ such that 
$f(t)=x$ and 
$f(s)\geq \frac{s}{t}x+\min(2s^\gamma, 2(t-s)^\gamma)$ $\forall s\in [1,\tfrac{t}{2}-1]\cup [\tfrac{t}{2}+1,t-1]$.
There exists a constant $c_1$ such that for any $t\geq 4$, $x\in [- 2 t,2 t]$, we can define $f:[0,t]\to \R$ in such a way that $f$ is twice continuously differentiable, $f(0)=-1$, $f(t)=x$,
$|f'(s)|\leq c_1$ $\forall s\in [0,t]$, $|f''(s)|\leq c_1$ $\forall s\in [0,t]$,
\begin{align*}
f(s)&=\tfrac{s}{t}x+\min(2s^\gamma, 2(t-s)^\gamma) \, \forall s\in [1,\tfrac{t}{2}-1]\cup [\tfrac{t}{2}+1,t-1]
\end{align*} 
and $|f''(s)|\leq c_1 t^{\gamma-1}$ $\forall s\in [\tfrac{t}{2}-1,\tfrac{t}{2}+1]$.
There exists a constant $c_2$ such that for any $t\geq 4$ we can define $L:[0,t]\to [1,\infty)$ in such a way that $L$ is twice continuously differentiable with $L(0)=2$, $L(t)=2$, $L(s)\leq c_2$ $\forall s\in [0,1]\cup [t-1,t]$, $|L'(s)|\leq c_2$ $\forall s\in [0,t]$, $|L''(s)|\leq c_2$ $\forall s\in [0,t]$,  
$$
L(s)=\min(2s^{1-\gamma-\epsilon},2(t-s)^{1-\gamma-\epsilon})\, \forall s\in [1,\tfrac{t}{2}-1]\cup [\tfrac{t}{2}+1,t-1],
$$
$L(s)\leq 2t^{1-\gamma-\epsilon}$ $\forall s\in [\frac{t}{2}-1,\frac{t}{2}+1]$ and
$|L''(s)|\leq c_2 t^{-\gamma-\epsilon}$ $\forall s\in [\frac{t}{2}-1,\frac{t}{2}+1]$.
We now need to check that the conditions of Proposition~\ref{prop:roberts} hold for $f$ and $L$ for some constant $Q$.
Note that for $s\in[1,\frac{t}{2}-1]$,
we have
$f''(s)=-2\gamma (1-\gamma)s^{\gamma-2}$
and for $s\in[\frac{t}{2}+1,t-1]$,
$f''(s)=-2\gamma (1-\gamma)(t-s)^{\gamma-2}$.
Also for $s\in[1,\frac{t}{2}-1]$,
$L''(s)=-2(\gamma+\epsilon) (1-\gamma-\epsilon)s^{-\gamma-\epsilon-1}$
and for $s\in[\frac{t}{2}+1,t-1]$,
$L''(s)=-2(\gamma+\epsilon) (1-\gamma-\epsilon)(t-s)^{-\gamma-\epsilon-1}$. 
Hence
\begin{align*}
&|L'(0)|L(0)+|L'(t)|L(t)+\int_0^t |L''(s)|L(s)ds +\int_0^t|f''(s)|L(s)ds\\
&\hspace{3cm}-|L'(0)|f(0)-|f'(0)|f(0)+\log L(0)+|f'(t)|L(t)\\
&\leq 4c_2+2\int_1^{t/2-1}4(\gamma+\epsilon) (1-\gamma-\epsilon)s^{-2\gamma-2\epsilon}ds+2c_2^2+4c_2 t^{1-2\gamma -2\epsilon}\\
&\hspace{1cm}+
2\int_1^{t/2-1}4\gamma (1-\gamma)s^{-1-\epsilon}ds+2c_1c_2
+4c_1 t^{-\epsilon}+c_2+c_1+\log 2+2 c_1\\
&< \log 2+7c_1+9c_2+2c_1c_2+2c_2^2
+\frac{8(\gamma+\epsilon) (1-\gamma-\epsilon)}{2\gamma+2\epsilon-1} 
+\frac{8\gamma(1-\gamma)}{\epsilon},
\end{align*}
since $\gamma+\epsilon\in (1/2,1)$ and $\epsilon>0$.
Let
$$
Q=\log 2+7c_1+9c_2+2c_1c_2+2c_2^2
+\frac{8(\gamma+\epsilon) (1-\gamma-\epsilon)}{2\gamma+2\epsilon-1} 
+\frac{8\gamma(1-\gamma)}{\epsilon}.
$$
We also have 
$$
\int_1^{t/2-1}L(s)^{-2}ds=\frac{1}{4}\int_1^{t/2-1}s^{2\gamma+2\epsilon-2}ds=\frac{1}{4}\frac{1}{2\gamma+2\epsilon-1}\left(\left(\tfrac{t}{2}-1\right)^{2\gamma+2\epsilon-1}-1\right),
$$
so since $\gamma+\epsilon>1/2$, we have $\int_0^t L(s)^{-2}ds \geq 1$ for $t$ sufficiently large.
Therefore, for $t$ sufficiently large, Proposition~\ref{prop:roberts} with constant $Q$ applies to $f$ and $L$.

We now need to estimate $\int_0^{t}L(s)^{-2}ds$ and $\int_0^{t}f'(s)^{2}ds$.
Since $L(s)\geq 1$ $\forall s\in [0,t]$,
\begin{equation} \label{eq:Lest}
\int_0^{t}L(s)^{-2}ds\leq \frac{1}{2}\int_1^{t/2-1}s^{2\gamma+2\epsilon-2}ds+4<\frac{1}{2\gamma+2\epsilon-1}t^{2\gamma+2\epsilon-1},
\end{equation}
for $t$ sufficiently large.
Finally, since for $s\in[1,\frac{t}{2}-1]$, $f'(s)=\frac{x}{t}+2\gamma s^{\gamma -1}$  and for $s\in[\frac{t}{2}+1,t-1]$, $f'(s)=\frac{x}{t}-2\gamma (t-s)^{\gamma -1}$, we have 
\begin{align} \label{eq:fest}
\int_0^t f'(s)^2 ds&\leq 4c_1^2+\int_1^{t/2-1}(\tfrac{x}{t}+2\gamma s^{\gamma -1})^2 ds+\int_{1}^{t/2-1}(\tfrac{x}{t}-2\gamma s^{\gamma -1})^2 ds \notag\\
&= 4c_1^2+2\int_1^{t/2-1}(\tfrac{x^2}{t^2}+4\gamma^2 s^{2\gamma -2}) ds \notag\\
&=4c_1^2 +\tfrac{t-4}{t^2}x^2+\tfrac{8\gamma^2}{2\gamma -1}\left(\left(\tfrac{t}{2}-1\right)^{2\gamma-1}-1\right) \notag \\
&<\tfrac{x^2}{t}+\tfrac{8\gamma^2}{2\gamma -1}t^{2\gamma-1}
\end{align}
for $t$ sufficiently large, since $\gamma>1/2$.

Therefore, for any $\delta\in (0,1)$, by Proposition~\ref{prop:roberts} with $p=0$ and $q=\delta/2$, 
\begin{align*}
&\psub{0}{B(s)-f(s)\in (0,L(s))\,\,\forall s\leq t, B(t)-f(t)\in (0,\delta)}\\
&\hspace{0.5cm}
\geq A(Q)\exp\left(-\frac{1}{2}\int_0^t f'(s)^2ds-\frac{\pi^2}{2}\int_0^t \frac{1}{L(s)^{2}}ds \right)
\sin\left(\frac{\pi}{2} \right)
\int_0^{\delta/2} \sin(\pi \nu) d\nu \\
&\hspace{0.5cm}\geq A(Q)\frac{1}{\pi}(1-\cos(\tfrac{1}{2}\pi \delta))\exp\left(-\frac{x^2}{2t}-\frac{4\gamma^2}{2\gamma -1}t^{2\gamma-1}-\frac{\pi^2}{2}\frac{1}{2\gamma+2\epsilon-1}t^{2\gamma+2\epsilon-1} \right)\\
&\hspace{0.5cm}\geq A(Q)\frac{1}{\pi}(1-\cos(\tfrac{1}{2}\pi \delta))\exp\left(-\frac{x^2}{2t}-\frac{\pi^2}{2\gamma+2\epsilon-1}t^{2\gamma+2\epsilon-1} \right),
\end{align*}
where the second inequality holds for $t$ sufficiently large by \eqref{eq:Lest} and \eqref{eq:fest}, and the last inequality is for $t$ sufficiently large.
The result follows by~\eqref{eq:star_heavy}.
\end{proof}
We can now prove Theorem~\ref{thm:heavylower} using Lemma~\ref{lem:abovegamma}.
Assume that $\phi$ satisfies assumption~\eqref{eq:phi_cond}.
Also suppose $u_0\in L^\infty (\R)$, $u_0\geq 0$, $u_0\not\equiv 0$, $\|u_0\|_\infty\leq L$ and $u_0(x)=0$ $\forall x\geq L$, and let $u$ denote the solution of~\eqref{nonlocal_fkpp}.
\begin{prop} \label{prop:heavytaillower}
Suppose that there exist $\alpha\in (0,2)$ and $r_0<\infty$ such that for $r\geq r_0$,
$\int_r^\infty \phi(x)dx \leq r^{-\alpha}.$
Then for $\beta>\frac{2-\alpha}{2+\alpha}$, for $t$ sufficiently large, $u(t,x)\geq m^*(1)$ $\forall x\in [0,\sqrt 2 t -t^\beta]$.
\end{prop}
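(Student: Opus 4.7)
The plan is to apply the Feynman--Kac formula~\eqref{feynmankac0} at the point $x=\sqrt 2 t+1$, where the associated Brownian path can be forced above the wavefront for every $s\in[0,t]$ at only a constant Gaussian cost, and then to propagate the resulting lower bound on $u(t,\sqrt 2 t+1)$ into the target interval using Lemmas~\ref{lem:growu}--\ref{lem:travelu} together with Theorem~\ref{thm:speed}. Starting slightly \emph{above} $\sqrt 2 t$ is key: if we applied Feynman--Kac at a point like $\sqrt 2 t-t^\beta$ directly, the barrier of Lemma~\ref{lem:abovegamma} would only lift $B(s)$ past $\sqrt 2(t-s)$ after a window of length $\Theta(t^{\beta/\gamma})$, on which the bound $\phi*u\le M$ would overwhelm the Gaussian gain.

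Given the hypothesis $\beta>(2-\alpha)/(2+\alpha)$, I would fix $\gamma\in(1/2,1)$ and small $\epsilon>0$ with $2\gamma+2\epsilon-1<\beta$ and $1-\alpha\gamma<\beta$; such a pair exists and is optimised at $\gamma=2/(2+\alpha)$, which is exactly where the critical threshold comes from. After the translation reducing $\mathbb E_x$ to $\mathbb E_0$ with endpoint $-x$, Lemma~\ref{lem:abovegamma} yields an event $A$ under $\mathbb P_x$ with $x=\sqrt 2 t+1$ on which $B(s)\ge\tfrac{t-s}{t}x+\min(2s^\gamma,2(t-s)^\gamma)$ for $s\in[1,t/2-1]\cup[t/2+1,t-1]$ and $|B(t)|\le \epsilon_0^3$ (ensuring $u_0(B(t))\ge\epsilon_0$ via Lemma~\ref{lem:abscty}), with
\bags
\mathbb P_x(A)\ \ge\ c\exp\!\Big(-\tfrac{x^2}{2t}-\tfrac{\pi^2}{2\gamma+2\epsilon-1}t^{2\gamma+2\epsilon-1}\Big).
\eags
Because $x>\sqrt 2 t$, on $A$ we have $B(s)-\sqrt 2(t-s)\ge (t-s)/t+\min(2s^\gamma,2(t-s)^\gamma)\ge 0$ throughout $[0,t]$, and this quantity exceeds $\tfrac{1}{2\sqrt 2}\log(t-s)+r_0^\phi$ as soon as $\min(s,t-s)\gtrsim(\log t)^{1/\gamma}$. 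Splitting $\phi*u(t-s,B(s))$ at $r=B(s)-\sqrt 2(t-s)-\tfrac{1}{2\sqrt 2}\log(t-s)$ and applying Lemma~\ref{lem:t-1u} on one side with the $\phi$-tail bound on the other, exactly as in~\eqref{eq:phiulow1}--\eqref{eq:phiulow2}, gives on the middle segment
\bags
\phi*u(t-s,B(s))\ \le\ Le^{2L}(t-s)^{-1}+C\min(s,t-s)^{-\alpha\gamma};
\eags
the two transition windows of length $O((\log t)^{1/\gamma})$ contribute only $O((\log t)^{1/\gamma})$ via $\phi*u\le M$ (Proposition~\ref{prop:globalbound}). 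The total $\int_0^t\phi*u$ is therefore $O(\log t)+O(t^{1-\alpha\gamma})=o(t^\beta)$. Since $t-x^2/(2t)=-\sqrt 2-O(1/t)$, the Feynman--Kac inequality gives
\bags
u(t,\sqrt 2 t+1)\ \ge\ \exp(-Ct^{\beta_0})
\eags
for some $\beta_0<\beta$ and all sufficiently large $t$.

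With this bound in hand, Lemma~\ref{lem:growu} produces $s_0(t)\in[t,t+Ct^{\beta_0}]$ and $y_0(t)$ within $R$ of $\sqrt 2 t+1$ with $u(s_0(t),y_0(t))\ge 1/2$. Given a large $T$ and a target $z\in[0,\sqrt 2 T-T^\beta]$, for $z\ge \sqrt 2 s^*+1$ (some fixed $s^*$) parametrise $z=\sqrt 2 s+1+O(R)$, giving $s\in[s^*,T-T^\beta/\sqrt 2]$; Lemma~\ref{lem:travelu} with $c=1$ applied at $(s_0(s),y_0(s))$ then yields $u(T,z)\ge m^*(1)$, because $|z-y_0(s)|=O(R)$ while $T-s_0(s)\ge T-s-Cs^{\beta_0}\gtrsim T^\beta-CT^{\beta_0}\ge t^*(1)$, using $\beta_0<\beta$. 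The remaining range of very small $z$ is covered either by Theorem~\ref{thm:speed} or by applying Lemma~\ref{lem:travelu} at the fixed base point $(s_0(s^*),y_0(s^*))$.

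The main obstacle is ensuring that $\int_0^t\phi*u$ is truly of order $o(t^\beta)$: the entropic cost $t^{2\gamma-1}$ from Lemma~\ref{lem:abovegamma} and the convolution cost $t^{1-\alpha\gamma}$ from the $\phi$-tail pull $\gamma$ in opposite directions, and the critical threshold $(2-\alpha)/(2+\alpha)$ is exactly the smallest $\beta$ for which both can be dominated by $t^\beta$ simultaneously. The choice to start at $\sqrt 2 t+1$, rather than inside the target interval, is essential: it exchanges a harmless constant Gaussian loss for the avoidance of a catastrophic $M\cdot t^{\beta/\gamma}$ transition cost that would arise from a starting point below the wavefront.
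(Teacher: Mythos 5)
Your route is essentially the paper's: you apply the Feynman--Kac formula at a point essentially at $\sqrt 2 t$, force the Brownian path above the barrier $\tfrac{t-s}{t}x+\min(2s^\gamma,2(t-s)^\gamma)$ with $\gamma=\tfrac{2}{2+\alpha}$ via Lemma~\ref{lem:abovegamma}, bound the convolution along such paths by splitting at the barrier height and using Lemma~\ref{lem:t-1u} on one side and the tail hypothesis on the other (with the two $(\log t)^{1/\gamma}$ windows handled by $\phi\ast u\le M$), obtain $u(t,\sqrt 2 t+1)\ge\exp(-Ct^{\beta_0})$ with $\beta_0<\beta$, and then propagate with Lemmas~\ref{lem:growu} and~\ref{lem:travelu}, covering the bounded range of $x$ by the argument of Theorem~\ref{thm:speed}. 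The exponent bookkeeping ($2\gamma-1=1-\alpha\gamma=\tfrac{2-\alpha}{2+\alpha}$, $\epsilon$ small) matches the paper, and the propagation step, including the check $T-s_0(s)\gtrsim T^\beta-CT^{\beta_0}$, is sound.

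There is, however, one step that fails as written: the treatment of the terminal value. You run Feynman--Kac all the way to time $t$ via~\eqref{feynmankac0} and claim that $|B(t)|\le\epsilon_0^3$ ensures $u_0(B(t))\ge\epsilon_0$ ``via Lemma~\ref{lem:abscty}''. Lemma~\ref{lem:abscty} is a continuity statement for $u(s,\cdot)$ with $s\ge 1$; it says nothing about $u_0$, and the standing assumptions ($u_0\ge 0$, $u_0\not\equiv 0$, bounded, vanishing on $[L,\infty)$) do not make $u_0$ positive anywhere near the origin --- take e.g.\ $u_0=\mathds 1_{[-10,-9]}$, for which $u_0(B(t))=0$ on your event. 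The fix is the one the paper uses: stop the representation at time $t-1$ using~\eqref{feynmankac}, note $u(1,\sqrt 2)\ge e^{1-M}\Esub{\sqrt 2}{u_0(B(1))}>0$, and use Lemma~\ref{lem:abscty} to get $u(1,y)\ge\delta$ for $|y-\sqrt 2|\le\delta^3$; then require $|B(t-1)-\sqrt 2|\le\delta^3$ in the barrier event (Lemma~\ref{lem:abovegamma} applied with horizon $t-1$ and a correspondingly shifted endpoint), which changes nothing in your exponent estimates. With that repair your argument goes through and coincides with the paper's proof.
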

\begin{proof}
Let $\gamma=\frac{2}{2+\alpha}$; note that $\gamma\in (\frac{1}{2},1)$ and $\alpha \gamma <1$.

By Lemma~\ref{lem:t-1u}, 
for $y\geq 0$ and $t\geq \max(\frac{1}{\sqrt 2 -1}L,1)$, we have
$
u(t,\sqrt 2 t +\frac{1}{2\sqrt 2}\log t +y)\leq L e^{2 L} t^{-1}.
$
Also, by Proposition~\ref{prop:globalbound}, $0\leq u(t,x)\leq M$ $\forall t\geq 0,x\in \R$.

Now take $t$ large.
Suppose that $s\in [\max(\frac{1}{\sqrt 2 -1}L,1,r_0^{1/\gamma}+1),\frac{1}{2}(t-1)]$
and $s$ is sufficiently large that $\frac{1}{2\sqrt 2}\log s \leq (s-1)^\gamma$.
Then if $y\geq \sqrt 2 s+2(s-1)^\gamma\geq \sqrt 2 s +\frac{1}{2\sqrt 2}\log s+(s-1)^\gamma$, we have that
\begin{equation} \label{eq:(1)4}
\phi \ast u(s,y)\leq L e^{2L}s^{-1}\int^{(s-1)^\gamma}_{-\infty} \phi (r)dr+M\int_{(s-1)^\gamma}^\infty \phi (r)dr 
\leq L e^{2L}s^{-1}+M(s-1)^{-\alpha \gamma},
\end{equation}
since $\phi \geq 0$, $\int_{-\infty}^\infty \phi(r)dr=1$ and $(s-1)^\gamma \geq r_0$.
Similarly, for $s\in [\frac{1}{2}(t+3),t-(\log t)^{1/\gamma}]$, then  $(t-s)^\gamma\geq \log t$, and so if $y\geq \sqrt 2 s+2(t-s)^\gamma>\sqrt 2 s+\frac{1}{2\sqrt 2}\log s+(t-s)^\gamma$ then for $t$ sufficiently large,
\begin{equation} \label{eq:(2)4}
\phi \ast u(s,y)
\leq L e^{2L}s^{-1}+M(t-s)^{-\alpha \gamma}.
\end{equation}
Hence if $B(s)\geq \sqrt 2 (t-s)+\min(2s^\gamma, 2(t-1-s)^\gamma)$ $\forall s\in [1,\frac{1}{2}(t-3)]\cup [\frac{1}{2}(t+1),t-2]$,
then by Proposition~\ref{prop:globalbound},
\begin{align} \label{eq:4phiu}
&\int_0^{t-1}\phi \ast u(t-s,B(s))ds \notag\\
&\qquad \leq 2M \left((\log t)^{1/\gamma}+1\right)
+\int_{[(\log t)^{1/\gamma},\frac{1}{2}(t-3)]\cup [\frac{1}{2}(t+1),t-(\log t)^{1/\gamma}]}\phi \ast u(t-s,B(s))ds \notag\\
&\qquad\leq 2M ((\log t)^{1/\gamma}+1)
+\int_{(\log t)^{1/\gamma}}^{t-(\log t)^{1/\gamma}}L e^{2L}s^{-1}ds
+2\int_{(\log t)^{1/\gamma}}^{(t-1)/2}M (s-1)^{-\alpha \gamma}ds \notag\\
&\qquad< 2M ((\log t)^{1/\gamma}+1)
+L e^{2L}\log t
+2M\frac{1}{1-\alpha \gamma}t^{1-\alpha \gamma} \notag\\
&\qquad<\frac{3M}{1-\alpha \gamma}t^{1-\alpha \gamma},
\end{align}
where the second inequality holds for $t$ sufficiently large by~\eqref{eq:(1)4} and~\eqref{eq:(2)4}, the third inequality follows since $\alpha \gamma<1$ and the last line follows
for $t$ sufficiently large.

Take $\epsilon>0$.
As in~\eqref{eq:u>0} in the proof of Theorem~\ref{thm:speed}, note that by the Feynman-Kac formula~\eqref{feynmankac0} and Proposition~\ref{prop:globalbound},
$u(1,\sqrt 2)\geq e^{1-M}\Esub{\sqrt 2}{u_0(B(1))}>0$ since $u_0\geq 0$ and $u_0\not \equiv 0$.
Then by Lemma~\ref{lem:abscty}, for $\delta\in (0,\min(\frac{1}{2}u(1,\sqrt 2),1))$ sufficiently small, we have $u(1,\sqrt 2 +x)\geq \delta$ if $|x|\leq \delta^3$.
By the Feynman-Kac formula~\eqref{feynmankac} and~\eqref{eq:4phiu},
for $t$ sufficiently large,
letting $I=[1,\frac{1}{2}(t-3)]\cup [\frac{1}{2}(t+1),t-2]$,
\begin{align*}
u(t,\sqrt 2 t)
&\geq \delta e^{t-1}e^{-\frac{3M}{1-\alpha \gamma}t^{1-\alpha \gamma}}\\
&\qquad \mathbb P_{\sqrt 2 t}\bigg(|B(t-1)-\sqrt 2|\leq \delta^3,
B(s)\geq \sqrt 2 (t-s)+\min(2s^\gamma, 2(t-1-s)^\gamma)\,\,\forall s\in I\bigg)\\
&= \delta e^{t-1}e^{-\frac{3M}{1-\alpha \gamma}t^{1-\alpha \gamma}}\\
&\qquad\mathbb P_{0}\bigg(|B(t-1)+\sqrt 2 (t-1)|\leq \delta^3,\\
&\qquad\hspace{1cm}B(s)\geq -\sqrt 2(t-1) \tfrac{s}{t-1}+\min(2s^\gamma, 2(t-1-s)^\gamma)\,\,\forall s\in I\bigg)\\
&\geq \delta e^{t-1}e^{-\frac{3M}{1-\alpha \gamma}t^{1-\alpha \gamma}}
c(1-\cos(\tfrac{1}{2}\pi \delta^3))\exp\left(-(t-1)-\frac{\pi^2}{2\gamma+2\epsilon-1}t^{2\gamma+2\epsilon-1} \right)
\end{align*}
for $t$ sufficiently large, where $c=c(\gamma,\epsilon)>0$ by Lemma~\ref{lem:abovegamma}.
Note that since $\gamma=\frac{2}{2+\alpha}$, we have that $1-\alpha \gamma = 2\gamma -1$ and therefore for $t$ sufficiently large,
\begin{align*}
u(t,\sqrt 2 t)
&\geq \exp\left(-t^{2\gamma-1+3\epsilon} \right).
\end{align*}
By Lemma~\ref{lem:growu}, it follows that for $C=C(1/2)$ and $R=R(1/2)$ and for $t$ sufficiently large, there exist $s\in[0,Ct^{2\gamma-1+3\epsilon}]$ and $y\in [-R,R]$ such that
$u(t+s,\sqrt 2 t+y)\geq 1/2$.
Then by Lemma~\ref{lem:travelu}, $\forall s\geq Ct^{2\gamma-1+3\epsilon}+R+t^*(1)$, we have
$u(t+s,\sqrt 2 t)\geq m^*(1)$.

Suppose $t$ is sufficiently large that $2Ct^{2\gamma-1+3\epsilon}>Ct^{2\gamma-1+3\epsilon}+R+t^*(1)$. For $x\in [0,\sqrt 2 t-2\sqrt 2 Ct^{2\gamma-1+3\epsilon}]$,
let $t'=x/\sqrt 2$.
Then $t'<t$ so for $t'$ sufficiently large, for $s\geq Ct^{2\gamma-1+3\epsilon}+R+t^*(1)$ we have
$u(t'+s,\sqrt 2 t')\geq m^*(1)$.
But since $t'\leq t-2Ct^{2\gamma-1+3\epsilon}$ and $x=\sqrt 2 t'$, we have that
$u(t,x)\geq m^*(1)$.

We have now shown that for $t$ sufficiently large, for $x\in [0,\sqrt 2 t-2\sqrt 2 Ct^{2\gamma-1+3\epsilon}]$ sufficiently large, $u(t,x)\geq m^*(1)$.
Finally, as in the proof of Theorem~\ref{thm:speed}, $u(1,0)>0$ and by Lemma~\ref{lem:growu} there exist $s_0\in[0,C\log (1/\min(z_0,u(1,0)))]$ and $y_0\in [-R,R]$ such that $u(1+s_0,y_0)\geq 1/2$.
Then by Lemma~\ref{lem:travelu}, for $t\geq t^*(1)$, for $x\in [y_0-t,y_0+t]$ we have 
$u(1+s_0+t,x)\geq m^*(1)$.
Hence for $t$ sufficiently large,
we have $u(t,x)\geq m^*(1)$ $\forall x\in [0,\frac{1}{2}t]$.
Therefore for $t$ sufficiently large, $u(t,x)\geq m^*(1)$ $\forall x\in [0,\sqrt 2 t-2\sqrt 2 Ct^{2\gamma-1+3\epsilon}]$.
The result follows since $2\gamma-1=\frac{2-\alpha}{2+\alpha}$ and $\epsilon>0$ can be taken arbitrarily small.
\end{proof}
The following result will be used to prove Theorems~\ref{thm:heavyupper} and~\ref{thm:heavyweak}.
\begin{prop} \label{prop:heavytailupper}
Suppose there exist $\alpha \in (0,2)$, $m>0$ and $\gamma'<\gamma:=\frac{2}{2+\alpha}$ such that for $t$ sufficiently large, $u(t,x)\geq m$ $\forall x\in [0,\sqrt 2 t-t^{\gamma'}]$.
Suppose $K<\infty$,
$\beta<\frac{2-\alpha}{2+\alpha}$ and $\epsilon>0$.
For $t$ sufficiently large, 
if $\int_{2t^\gamma}^{2Kt^\gamma} \phi (r)dr\geq 2^{-\alpha} t^{-\alpha \gamma}$
then $u(t,x)<\epsilon$ $\forall x\geq \sqrt 2 t-t^\beta$.
\end{prop}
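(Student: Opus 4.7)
The strategy mirrors that of Proposition~\ref{prop:upperboundDlog}, but uses the heavy tail of $\phi$ in place of the short-range positivity $\phi\ge\eta$ on $(-\sigma,\sigma)$. Set $\gamma=\frac{2}{2+\alpha}$, $\beta_0=\frac{2-\alpha}{2+\alpha}=1-\alpha\gamma$, and $T_0=2\sqrt 2 Kt^\gamma$. The key pointwise lower bound on $\phi\ast u$ reads as follows: for any $\tau\in[T_0,t]$ and any $y\in[2Kt^\gamma,\sqrt 2\tau+t^\gamma]$, every $r\in[2t^\gamma,2Kt^\gamma]$ satisfies $y-r\in[0,\sqrt 2\tau-\tau^{\gamma'}]$ (using $\tau^{\gamma'}\le t^{\gamma'}\le t^\gamma$ since $\gamma'<\gamma$ and $\tau\le t$), so the hypothesis on $u$ combined with the heavy-tail assumption on $\phi$ yields
\[
\phi\ast u(\tau,y)\ge m\int_{2t^\gamma}^{2Kt^\gamma}\phi(r)\,dr\ge m\cdot 2^{-\alpha}t^{-\alpha\gamma}.
\]

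Since $u(t,x)\to 0$ uniformly on $x\ge\sqrt 2 t$ by \eqref{uabovesqrt2}, it suffices to treat $x\in[\sqrt 2 t-t^\beta,\sqrt 2 t]$. Applying the Feynman--Kac formula~\eqref{feynmankac0} with $\|u_0\|_\infty\le L$ and $u_0(y)=0$ for $y\ge L$, I decompose along the tube event
\[
F=\bigl\{B(s)\in[2Kt^\gamma,\sqrt 2(t-s)+t^\gamma]\ \forall\,s\in[0,t-T_0]\bigr\}.
\]
On $F\cap\{B(t)\le L\}$ the previous step gives $\int_0^{t-T_0}\phi\ast u(t-s,B(s))\,ds\ge c_1 t^{\beta_0}$, so the $F$-contribution to $u(t,x)$ is at most $Le^t\,\psub{x}{B(t)\le L}\,e^{-c_1t^{\beta_0}}$, which by \eqref{eq:gaussiantail2} is bounded by $C\exp(\sqrt 2\,t^\beta-c_1 t^{\beta_0})$ and tends to $0$ as $t\to\infty$ since $\beta<\beta_0$.

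It remains to show that $e^t\,\psub{x}{F^c\cap\{B(t)\le L\}}\to 0$. Write $F^c$ as the union of (a) $\{\inf_{s\le t-T_0}B(s)\le 2Kt^\gamma\}$ and (b) $\{\sup_{s\le t-T_0}(B(s)-\sqrt 2(t-s))\ge t^\gamma\}$. For (a), the reflection principle followed by a Gaussian tail computation---using that the level $2Kt^\gamma$ lies at distance $\gtrsim t^\gamma$ below the typical bridge value $\sqrt 2 T_0=4Kt^\gamma$---produces an extra factor $\exp(-c_2\,t^\gamma)$ on top of $e^{-(x-L)^2/(2t)}$. For (b), one discretizes $[0,t-T_0]$ into unit blocks and applies a Gaussian bridge fluctuation estimate (conditioning on $B(t)$) to each block, exploiting that $\sqrt 2(t-s)+t^\gamma$ exceeds the bridge mean by $t^\gamma$ while the bridge fluctuates on scale $\sqrt t\ll t^\gamma$; this gives an extra factor $\exp(-c_3\,t^{2\gamma-1})=\exp(-c_3\,t^{\beta_0})$ per block, and the $O(t)$ terms in the union bound are absorbed. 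Multiplied by $e^t$ both contributions vanish, since $\gamma,\beta_0>\beta$. The main obstacle is the case-(b) estimate: a naive reflection argument fails because the curve $\sqrt 2(t-s)+t^\gamma$ drops below $x$ almost immediately, so one must work with the Brownian bridge conditional on the endpoint and union-bound over time slices.
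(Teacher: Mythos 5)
Your proposal is correct in outline and reaches the right orders of magnitude, but it takes a genuinely different route from the paper at one key point: the choice of tube. You keep the lower tube boundary at $2Kt^\gamma$ so that the heavy-tail bound $\phi\ast u\geq m\,2^{-\alpha}t^{-\alpha\gamma}$ applies along the whole path, and you must then control two boundary-crossing events; the paper instead widens the tube to $B(s)\in[0,\sqrt 2(t-s)+t^\gamma]$ and only over the middle window $s\in[t/4,3t/4]$, using the assumption~\eqref{eq:phi_cond} that $\phi\geq\eta$ a.e.\ on $(-\sigma,\sigma)$ to get $\phi\ast u\geq m\sigma\eta$ whenever the path is in the region $[0,t-s]$, so the accumulated cost is $\tfrac12 t\,m\min(\int_{2t^\gamma}^{2Kt^\gamma}\phi,\sigma\eta)$ and the bad event is a single exit from a very wide tube. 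That buys a much simpler estimate: conditioning on $B(t)=y$ with $y\geq-\sqrt 2 t/8$ (the remaining endpoint range is killed by a crude Gaussian bound $e^{-5t/4}$), the exit event forces $|\xi^t(s)|\geq\tfrac12 t^\gamma$ somewhere, and one Brownian scaling plus one reflection gives $4e^{-t^{2\gamma-1}/32}$ uniformly, with no block decomposition. Your route is workable but costs more: for your event (b) the blockwise bridge argument over $[0,t-T_0]$ is needed precisely because the bridge variance is not uniform there, and for your event (a) the phrase ``reflection principle followed by a Gaussian tail computation'' undersells the work required. A bare reflection at the level $2Kt^\gamma$ only yields an absolute bound of order $e^{-ct^{2\gamma-1}}$, which multiplied by $e^t$ diverges; to get the claimed product form $e^{-(x-L)^2/(2t)}e^{-c_2t^\gamma}$ you must apply the strong Markov property at the hitting time $\tau$ of $2Kt^\gamma$ and account jointly for the cost of reaching that level by time $t-T_0$ and the cost of the remaining passage from $2Kt^\gamma$ down to $L$ in time $t-\tau\geq T_0$, optimizing over $\tau$ (the minimum of the combined exponent is at $\tau=t-T_0$ and gives exactly the extra $e^{-ct^\gamma}$ you assert). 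With that step made explicit your argument closes, since $\beta<2\gamma-1<\gamma$; but the paper's use of the local positivity of $\phi$ near the origin sidesteps both delicate estimates entirely.
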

Before proving this result, we shall prove Theorems~\ref{thm:heavyupper} and~\ref{thm:heavyweak} as corollaries.
We begin by using Proposition~\ref{prop:heavytailupper} to prove Theorem~\ref{thm:heavyweak}.
\begin{cor}
Suppose that there exist $\alpha\in (0,2)$ and $K<\infty$ such that
$
\forall R>0, \exists\, r>R \text{ such that } \int_r^{Kr}\phi (x) dx\geq  r^{-\alpha}.
$
Then for any $\beta<\frac{2-\alpha}{2+\alpha}$ and $\epsilon>0$, for any $T<\infty$ there exist $t\geq T$ and $x\in [0,\sqrt 2 t-t^\beta]$ such that $u(t,x)<\epsilon$.
\end{cor}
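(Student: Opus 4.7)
The plan is to argue by contradiction and reduce directly to Proposition~\ref{prop:heavytailupper}. Suppose the conclusion fails: then there exist $\beta_0<\frac{2-\alpha}{2+\alpha}$, $\epsilon_0>0$ and $T_0<\infty$ such that $u(t,x)\geq\epsilon_0$ for every $t\geq T_0$ and every $x\in[0,\sqrt 2 t-t^{\beta_0}]$. Since $\beta_0<\frac{2-\alpha}{2+\alpha}<\frac{2}{2+\alpha}=\gamma$, this is exactly the form of global lower bound required by the hypothesis of Proposition~\ref{prop:heavytailupper} with $m=\epsilon_0$ and $\gamma'=\beta_0$.

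I would then pick an intermediate exponent $\beta'\in(\beta_0,\frac{2-\alpha}{2+\alpha})$ and some $\epsilon'\in(0,\epsilon_0)$, and feed the proposition a suitable sequence of times coming from the tail condition on $\phi$. By assumption there is an unbounded sequence $(r_n)_{n\geq 1}$ with $\int_{r_n}^{Kr_n}\phi(x)\,dx\geq r_n^{-\alpha}$. Setting $t_n:=(r_n/2)^{1/\gamma}$, so that $r_n=2t_n^\gamma$ and $Kr_n=2Kt_n^\gamma$, turns the scattered tail condition into the scale-matched form $\int_{2t_n^\gamma}^{2Kt_n^\gamma}\phi(x)\,dx\geq 2^{-\alpha}t_n^{-\alpha\gamma}$, and $t_n\to\infty$. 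Applying Proposition~\ref{prop:heavytailupper} with parameters $m=\epsilon_0$, $\gamma'=\beta_0$, $\beta=\beta'$ and $\epsilon=\epsilon'$, I conclude that for all sufficiently large $n$, $u(t_n,x)<\epsilon'$ for every $x\geq\sqrt 2 t_n-t_n^{\beta'}$.

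The final step is to evaluate at the single point $x_n:=\sqrt 2 t_n-t_n^{\beta'}$. Because $\beta'>\beta_0$ and $t_n\to\infty$, for $n$ large we have $0<x_n<\sqrt 2 t_n-t_n^{\beta_0}$, so the contradiction assumption gives $u(t_n,x_n)\geq\epsilon_0$, while the previous step gives $u(t_n,x_n)<\epsilon'<\epsilon_0$. This is the desired contradiction. I do not anticipate any serious obstacle: all of the probabilistic and PDE analysis is packaged inside Proposition~\ref{prop:heavytailupper}, and the corollary is essentially a bookkeeping exercise in threading the strict inequalities $\beta_0<\beta'<\frac{2-\alpha}{2+\alpha}<\gamma$ and in performing the substitution $r=2t^\gamma$ to match the scales in the two statements.
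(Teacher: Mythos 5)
Your proposal is correct and follows essentially the same route as the paper: negate the statement, note that the resulting lower bound $u(t,x)\geq\epsilon$ on $[0,\sqrt 2 t-t^{\beta}]$ is exactly the hypothesis of Proposition~\ref{prop:heavytailupper} (with $m=\epsilon$, $\gamma'=\beta<\gamma$), and use the substitution $r=2t^{\gamma}$ to produce arbitrarily large times at which the proposition forces $u<\epsilon$ near the front, a contradiction. Your insertion of the intermediate exponent $\beta'$ and threshold $\epsilon'$ is harmless but unnecessary; the paper applies the proposition directly with the given $\beta$ and $\epsilon$ and evaluates at $x=\sqrt 2 t_0-t_0^{\beta}$.
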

\begin{proof}
Take $\beta<\frac{2-\alpha}{2+\alpha}$ and $\epsilon>0$, and suppose, aiming for a contradiction, that for some $T<\infty$, $\forall t\geq T$, $u(t,x)\geq \epsilon$ $\forall x\in [0,\sqrt 2 t-t^\beta]$. Let $\gamma=\frac{2}{2+\alpha}$; there exists $t_0\geq T$ arbitrarily large with $\int_{2t_0^\gamma}^{2Kt_0^\gamma} \phi (r)dr\geq 2^{-\alpha} t_0^{-\alpha \gamma}$. By Proposition~\ref{prop:heavytailupper} with $\gamma'=\beta<\frac{2}{2+\alpha}$ and $m=\epsilon$, it follows that if $t_0$ is sufficiently large then $u(t_0,\sqrt 2t_0-t_0^\beta)<\epsilon$, which is a contradiction. The result follows by increasing $t_0$ if necessary.
\end{proof}
We can also prove Theorem~\ref{thm:heavyupper} as a consequence of Propositions~\ref{prop:heavytaillower} and~\ref{prop:heavytailupper}.
\begin{cor}
Suppose that there exist $\alpha \in (0,2)$, $r_0<\infty$ and $K<\infty$ such that for $r\geq r_0$,
$\int_r^{Kr}\phi (x) dx\geq r^{-\alpha}$
and
$\int_r^{\infty}\phi (x) dx\leq r^{-\alpha/2}.$
Then for any $\beta<\frac{2-\alpha}{2+\alpha}$ and $\epsilon>0$, for $t$ sufficiently large, $u(t,x)<\epsilon$ $\forall x\geq \sqrt 2 t-t^\beta$.
\end{cor}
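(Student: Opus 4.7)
The plan is to realize this corollary as a direct combination of the two preceding propositions, using the hypotheses in a slightly asymmetric way: the upper bound on the tail of $\phi$ will be used (with exponent halved) to invoke Proposition~\ref{prop:heavytaillower} and produce a sufficiently strong lower bound on $u$, and the lower bound on $\int_r^{Kr}\phi$ will be used to invoke Proposition~\ref{prop:heavytailupper} and deduce the desired upper bound on $u$.

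First I would apply Proposition~\ref{prop:heavytaillower} with $\alpha$ replaced by $\alpha/2$. This is legal because $\alpha/2\in(0,2)$ and the hypothesis $\int_r^\infty \phi(x)\,dx \leq r^{-\alpha/2}$ holds for $r\geq r_0$. The proposition then yields, for any $\beta'>\frac{2-\alpha/2}{2+\alpha/2}=\frac{4-\alpha}{4+\alpha}$ and all sufficiently large $t$,
\begin{equation*}
u(t,x)\geq m^*(1)\quad\forall x\in[0,\sqrt 2 t-t^{\beta'}].
\end{equation*}
The key algebraic fact that makes everything fit is the strict inequality $\frac{4-\alpha}{4+\alpha}<\frac{2}{2+\alpha}$, which follows from $(4-\alpha)(2+\alpha)-2(4+\alpha)=-\alpha^2<0$. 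Consequently, I can fix some $\gamma'$ strictly between $\frac{4-\alpha}{4+\alpha}$ and $\gamma:=\frac{2}{2+\alpha}$, and take $m=m^*(1)$ in the hypothesis of Proposition~\ref{prop:heavytailupper}.

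Next I would verify the integral condition required by Proposition~\ref{prop:heavytailupper}. For $t$ large enough that $2t^{\gamma}\geq r_0$, the assumption $\int_r^{Kr}\phi(x)\,dx\geq r^{-\alpha}$ applied at $r=2t^{\gamma}$ gives
\begin{equation*}
\int_{2t^\gamma}^{2Kt^\gamma}\phi(x)\,dx \geq (2t^\gamma)^{-\alpha}=2^{-\alpha}t^{-\alpha\gamma},
\end{equation*}
which is exactly what Proposition~\ref{prop:heavytailupper} requires. Feeding in the lower bound on $u$ from the previous step and this integral estimate, the proposition concludes that for any $\beta<\frac{2-\alpha}{2+\alpha}$ and any $\epsilon>0$, for $t$ sufficiently large one has $u(t,x)<\epsilon$ for all $x\geq \sqrt 2 t-t^\beta$, which is the desired statement.

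There is essentially no technical obstacle beyond the observation $\frac{4-\alpha}{4+\alpha}<\frac{2}{2+\alpha}$: this is precisely why the weaker tail upper bound $r^{-\alpha/2}$ (rather than $r^{-\alpha}$) in the hypothesis is sufficient to drive the argument. Every other step is a direct invocation of one of the two propositions with parameters chosen as above, so the proof reduces to checking this interval of admissible exponents is non-empty and assembling the two inputs of Proposition~\ref{prop:heavytailupper}.
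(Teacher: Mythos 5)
Your proposal is correct and follows essentially the same route as the paper: apply Proposition~\ref{prop:heavytaillower} with the tail exponent $\alpha/2$ to get the lower bound $u(t,x)\geq m^*(1)$ on $[0,\sqrt 2 t - t^{\gamma'}]$ for some $\gamma'\in\bigl(\tfrac{4-\alpha}{4+\alpha},\tfrac{2}{2+\alpha}\bigr)$, then feed this into Proposition~\ref{prop:heavytailupper} with $m=m^*(1)$ after checking $\int_{2t^\gamma}^{2Kt^\gamma}\phi(r)\,dr\geq 2^{-\alpha}t^{-\alpha\gamma}$ at $r=2t^\gamma$. The key observation $\tfrac{4-\alpha}{4+\alpha}<\tfrac{2}{2+\alpha}$ and the choice of parameters match the paper's argument exactly.
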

\begin{proof}
Since $\frac{4-\alpha}{4+\alpha}<\frac{2}{2+\alpha}$, we can take $\gamma'\in(\frac{4-\alpha}{4+\alpha},\frac{2}{2+\alpha})$.
By Proposition~\ref{prop:heavytaillower}, since $\gamma'>\frac{4-\alpha}{4+\alpha}=\frac{2-\frac{1}{2}\alpha}{2+\frac{1}{2}\alpha}$, for $t$ sufficiently large, $u(t,x)\geq m^*(1)$ $\forall x\in [0,\sqrt 2 t -t^{\gamma'}]$.
The result then follows by Proposition~\ref{prop:heavytailupper} with $m=m^*(1)$, since $\gamma'<\gamma:=\frac{2}{2+\alpha}$ and $\int_{2t^\gamma}^{2Kt^\gamma} \phi (r)dr\geq 2^{-\alpha} t^{-\alpha \gamma}$ $\forall t\geq (r_0/2)^{1/\gamma}$.
\end{proof}
It remains to prove Proposition~\ref{prop:heavytailupper}.
\begin{proof}[Proof of Proposition~\ref{prop:heavytailupper}]
Let $\gamma=\frac{2}{2+\alpha}.$
Suppose $t$ is sufficiently large that $t/4>2Kt^\gamma$ and $\forall s \geq t/4$, $u(s,x)\geq m$ $\forall x\in [0,\sqrt 2 s -s^{\gamma'}]$.
If $s\in [t/4,3t/4]$ and $y\in [s,\sqrt 2 s+t^\gamma]$, then 
\begin{align*}
\phi \ast u (s,y)&\geq m \int_{t^\gamma +t^{\gamma'}}^s \phi (r)dr
\geq m \int_{2t^\gamma}^{2Kt^\gamma} \phi (r)dr,
\end{align*}
since $\gamma'<\gamma$ and $s>2Kt^\gamma$.
If $s\in [t/4,3t/4]$ and $y\in [0,s]$ then $\phi \ast u(s,y)\geq m \sigma \eta$, since $\phi\geq \eta$ a.e.~on $(-\sigma,\sigma)$.
It follows that for $t$ sufficiently large, if $B(s)\in [0,\sqrt 2(t-s)+t^\gamma]$ $\forall s\in [t/4,3t/4]$, then
$$
\int_0^t \phi \ast u(t-s,B(s))ds \geq \tfrac{1}{2} t m \min \left(\int_{2t^\gamma}^{2Kt^\gamma} \phi (r)dr,\sigma \eta \right).
$$
Therefore for $t$ sufficiently large, 
if $\int_{2t^\gamma}^{2Kt^\gamma} \phi (r)dr\geq 2^{-\alpha} t^{-\alpha \gamma}$
then
for $x\in \R$,
 by the Feynman-Kac formula~\eqref{feynmankac0} and since $\phi \ast u \geq 0$,
\begin{align} \label{eq:uboundheavy}
u(t,x)&\leq
e^t \Esub{x}{u_0(B(t))\left(e^{-\frac{1}{2}m 2^{-\alpha} t^{1-\alpha \gamma} }+\I{\exists s\in [t/4,3t/4]:B(s)\notin [0,\sqrt 2(t-s)+t^\gamma]}\right)} \notag\\
&\leq Le^{t-\frac{1}{2}m 2^{-\alpha} t^{1-\alpha \gamma}}\psub{x}{B(t)\leq L}\notag \\
&\qquad +Le^t \psub{x}{B(t)\leq L,\exists s\in [t/4,3t/4]:B(s)\notin [0,\sqrt 2(t-s)+t^\gamma]},
\end{align}
since $\|u_0\|_\infty \leq L$ and $u_0(y)=0$ $\forall y\geq L$.
We now want to estimate these two probabilities.

Take $\beta<\frac{2-\alpha}{2+\alpha}$.
Suppose that $x\in [\sqrt 2 t - t^\beta, \sqrt 2 t]$ and let $x_0=\sqrt 2 t-x$.
Then for $y\in [-\sqrt 2 t/8,L]$,
\begin{align*}
&\psub{x}{\exists s\in [t/4,3t/4]:B(s)\notin [0,\sqrt 2(t-s)+t^\gamma]\bigg| B(t)=y}\\
&=\p{\exists s\in [t/4,3t/4]:\xi^t(s)+\tfrac{s}{t}y+\tfrac{t-s}{t}x\notin [0,\sqrt 2(t-s)+t^\gamma]}\\
&=\p{\exists s\in [t/4,3t/4]:\xi^t(s)+\tfrac{s}{t}y-\tfrac{t-s}{t}x_0\notin [-\sqrt 2(t-s),t^\gamma]}\\
&\leq \p{\exists s\in [t/4,3t/4]:|\xi^t(s)|\geq\tfrac{1}{2}t^\gamma}
\end{align*}
for $t$ sufficiently large, since $\beta<1$ and for $s\in [0,t]$, $sy/t \in[-\sqrt 2 t/8,L]$ and $x_0(t-s)/t\in [0, t^\beta]$ so $\tfrac{s}{t}y-\tfrac{t-s}{t}x_0\in [-\sqrt 2 t/8-t^\beta, L]$.
By Brownian scaling, and then since $(\xi^1(s),0\leq s \leq 1)\eqdist (B(s)-sB(1),0\leq s\leq 1)$, it follows that
\begin{align*}
&\psub{x}{\exists s\in [t/4,3t/4]:B(s)\notin [0,\sqrt 2(t-s)+t^\gamma]\bigg| B(t)=y}\\
&\leq \p{\exists s\in [0,1]:|\xi^1(s)|\geq\tfrac{1}{2}t^{\gamma-\frac{1}{2}}}\\
&= \psub{0}{\exists s\in [0,1]:|B(s)-sB(1)|\geq\tfrac{1}{2}t^{\gamma-\frac{1}{2}}}\\
&\leq \psub{0}{\sup_{s\in [0,1]}|B(s)|\geq\tfrac{1}{4}t^{\gamma-\frac{1}{2}}}.
\end{align*}
By the reflection principle, it follows that
\begin{align*}
&\psub{x}{\exists s\in [t/4,3t/4]:B(s)\notin [0,\sqrt 2(t-s)+t^\gamma]\bigg| B(t)=y}\\
&\leq 4\psub{0}{B(1)\geq\tfrac{1}{4}t^{\gamma-\frac{1}{2}}}\\
&\leq 4\exp(-t^{2\gamma-1}/32)
\end{align*}
by \eqref{eq:gaussiantail}.
Therefore,
\begin{align} \label{eq:(*)4}
&\psub{x}{B(t)\leq L,\exists s\in [t/4,3t/4]:B(s)\notin [0,\sqrt 2(t-s)+t^\gamma]} \notag\\
&\qquad\leq 4e^{-t^{2\gamma-1}/32}\psub{x}{B(t)\in [-\sqrt 2 t/8, L]}
+\psub{x}{B(t)\leq -\sqrt 2 t/8}.
\end{align}
Now by \eqref{eq:gaussiantail}, since $x\geq \sqrt 2 t-t^\beta$,
\begin{align*}
\psub{x}{B(t)\leq L}&\leq 
\exp(-(\sqrt 2 t-t^\beta -L)^2/2t)
\leq \exp(-t+\sqrt 2 t^\beta +\sqrt 2 L).
\end{align*}
Also by \eqref{eq:gaussiantail},
$$
\psub{x}{B(t)\leq -\sqrt 2 t/8}\leq \exp\left(-\tfrac{1}{2t}\left(\tfrac{9\sqrt 2}{8}t - t^\beta\right)^2\right)
\leq e^{-5t/4}
$$
for $t$ sufficiently large.
Therefore, substituting into \eqref{eq:(*)4} and then~\eqref{eq:uboundheavy}, for $t$ sufficiently large, for $x\in [\sqrt 2 t - t^\beta, \sqrt 2 t]$, if $\int_{2t^\gamma}^{2Kt^\gamma} \phi (r)dr\geq 2^{-\alpha} t^{-\alpha \gamma}$ then 
\begin{align*}
u(t,x)
&\leq Le^{t-\frac{1}{2}m 2^{-\alpha} t^{1-\alpha \gamma}}e^{-t+\sqrt 2 t^\beta +\sqrt 2 L}  +Le^t ( 4e^{-t^{2\gamma-1}/32}e^{-t+\sqrt 2 t^\beta +\sqrt 2 L}+e^{-5t/4})\\
&\leq Le^{\sqrt 2 L}\left(e^{\sqrt 2 t^\beta-\frac{1}{2}m 2^{-\alpha} t^{1-\alpha \gamma}} +4e^{\sqrt 2 t^\beta-t^{2\gamma-1}/32}\right)+Le^{-t/4}.
\end{align*}
Since $\beta<\frac{2-\alpha}{2+\alpha}=1-\alpha \gamma=2\gamma-1$, for $\epsilon>0$ fixed, if $t$ is sufficiently large and $\int_{2t^\gamma}^{2Kt^\gamma} \phi (r)dr\geq 2^{-\alpha} t^{-\alpha \gamma}$ then $u(t,x)<\epsilon$ $\forall x\in [\sqrt 2 t - t^\beta, \sqrt 2 t]$.
By \eqref{uabovesqrt2} in the proof of Theorem~\ref{thm:speed}, for $t$ sufficiently large, $u(t,x)<\epsilon$ $\forall x\geq \sqrt 2 t$. The result follows.

\end{proof}

\small 


\end{document}